\documentclass{amsart}
\usepackage[english]{babel}
\usepackage{amssymb,enumerate,bbm,amsmath}
\usepackage[colorlinks=true,linkcolor=blue,citecolor=blue]{hyper ref}
\usepackage[backend=bibtex,style=ieee,sorting=anyt]{biblatex}
\usepackage{csquotes}
\usepackage{tikz}
\addbibresource{Heintze-Karcher.bib}

\newtheorem{prop}{Proposition}
\newtheorem{thm}{Theorem}
\newtheorem*{thmA}{Theorem A}
\newtheorem{lem}{Lemma}
\newtheorem{cor}{Corollary}

\newtheorem*{rem}{Remark}
\newtheorem*{acknowledgement}{Acknowledgements}



\newcommand{\mrm}{\mathrm}


\newcommand{\Ric}{\mrm{Ric}}

\newcommand{\divv}{\mrm{div}}

\begin{document}
	\title[Rigidity of capillary hypersurfaces in the hyperbolic space]{Some rigidity results on compact hypersurfaces with capillary boundary in the hyperbolic space}
	\author[Chen]{Yimin Chen}
	
	\address{Department of Mathematics\\Pusan National University\\Busan 46241 \\ Republic of Korea\\}
	\email{sherlockpoe@pusan.ac.kr}
	
	\author[Pyo]{Juncheol Pyo}
	
	\address{Department of Mathematics\\Pusan National University\\Busan 46241 \\ Republic of Korea\\}
	\email{jcpyo@pusan.ac.kr}

	\begin{abstract}
		In this paper, we prove a Heintze-Karcher type inequality for capillary hypersurfaces supported on various hypersurfaces in the hyperbolic space. The equality case only occurs on capillary totally umbilical hypersurfaces. Then we apply this result to prove the Alexandrov type theorem for embedded capillary hypersurfaces in the hyperbolic space. In addition, we prove some other rigidity results for capillary hypersurfaces supported on totally geodesic plane in $\mathbb B^{n+1}_+$.
	\end{abstract}
	
	{\maketitle}
	\tableofcontents
	\section{Introduction}
	Let $\mathbb H^{n+1}$ be the $(n+1)$-dimensional hyperbolic space. We will use both the Poincar\'{e} ball model and the Poincar\'{e} half space model of $\Bbb H^{n+1}$ throughout this paper. Let $\delta$ be the Euclidean metric and $|\cdot|=\delta(\cdot,\cdot)^{1/2}$ be the Euclidean norm. The Poincar\'{e} ball model is defined as follows:
	\begin{align*}
		\left(\mathbb B^{n+1},\frac{4}{(1-|x|^2)^2}\delta\right),
	\end{align*}
	where $\mathbb B^{n+1}$ is the unit Euclidean ball centered at the origin.
	
	The Poincar\'{e} half space model is defined as follows:
	\begin{align*}
		\left(\mathbb R^{n+1}_+,\frac{1}{x_{n+1}^2}\delta\right),
	\end{align*}
	where $\mathbb R^{n+1}_+$ is the half Euclidean space on which the last coordinate function $x_{n+1}$ is strictly positive.
	
	
	Let $S$ be an umbilical hypersurface in $\mathbb H^{n+1}$ with its principal curvature $\lambda$, and $\Sigma\hookrightarrow\mathbb H^{n+1}$ is an immersed $n$-dimensional manifold into $\mathbb H^{n+1}$ such that $x(\partial\Sigma)\subset S$. If $\Sigma$ intersects $S$ at a constant angle $\theta$, $\Sigma$ is said to be a \textit{capillary hypersurface}. In particular if $\theta=\frac{\pi}{2}$, $\Sigma$ is said to be a \textit{free boundary hypersurface}.  Furthermore, we call $S$ the supporting hypersurface of the capillary hypersurface $\Sigma$. The study of capillary hypersurfaces has a long history since the works by Young in \cite{young1805iii} and Laplace in \cite{laplace1805traite}. The notable result of Fraser and Schoen in \cite{FraserScheon2016Sharpeigenvalue}  reveals the relation between the Steklov eigenvalue and the free boundary minimal hypersurfaces in the unit ball. Based on their significant work,  properties of free boundary hypersurfaces have been established, including some geometric inequalities.
	
	There are several results on geometric inequalities on capillary hypersurfaces in a space form. In \cite{scheuer2022alexandrov} and \cite{weng2022alexandrov}, Scheuer, Wang, Weng and Xia have established a family of Alexandrov-Fenchel's type inequalities on capillary hypersurfaces in a geodesic ball. In \cite{wang2023alexandrov} and \cite{hu2022complete}, the authors give a complete family of Alexandrov-Fenchel's type inequalities  on capillary hypersurfaces in Euclidean half space. And in \cite{chen2022geometric}, Chen, Hu and Li proved Perez type inequality on free boundary hypersurfaces in a geodesic ball.
	
	In \cite{Ros1987compacthighercurv}, Ros used Reilly's formula (see \cite{Reilly1977AppHess}) to prove the following result.
	
	\begin{thmA}
		Let $\Omega$ be an $(n+1)$-dimensional Riemannian manifold with boundary and non-negative Ricci curvature. Suppose the boundary $M=\partial\Omega$ is mean convex (i.e. the mean curvature $H_M>0$), then
		\begin{equation*}
			\int_M\frac{1}{H_M}dA\geq(n+1)V(\Omega),
		\end{equation*}
		where $V(\Omega)$ denotes the volume of $\Omega$. Moreover, the equality holds if and only if $\Omega$ is isometric to a Euclidean ball.
	\end{thmA}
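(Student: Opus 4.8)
The plan is to follow Ros's strategy and extract the inequality from Reilly's formula \cite{Reilly1977AppHess}, reserving the bulk of the effort for the rigidity in the equality case. First I would introduce the auxiliary function $f$ solving the boundary value problem
\[
\begin{cases}
\Delta f = 1 & \text{in } \Omega,\\
f = 0 & \text{on } M,
\end{cases}
\]
whose existence and uniqueness are standard. Writing $z = f|_M$, $u_\nu = \langle\nabla f,\nu\rangle$ for the outward normal derivative, and $H = \tr(\mrm{II})$ for the trace of the second fundamental form (so the normalized mean curvature of the statement is $H_M = H/n$), Reilly's formula reads
\[
\int_\Omega\Big((\Delta f)^2 - |\hess{f}|^2 - \Ric(\nabla f,\nabla f)\Big)\,dV = \int_M\Big(H\,u_\nu^2 + 2 u_\nu \Delta_M z + \mrm{II}(\nabla_M z,\nabla_M z)\Big)\,dA.
\]
Because $f$ vanishes identically on $M$, the tangential quantities $\nabla_M z$ and $\Delta_M z$ vanish, so the last two boundary terms drop out.

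Next I would convert this identity into the inequality. On the left, $\Delta f = 1$ gives $(\Delta f)^2 = 1$, while the pointwise Cauchy--Schwarz inequality for symmetric $2$-tensors yields $|\hess{f}|^2 \ge (\Delta f)^2/(n+1) = 1/(n+1)$; together with $\Ric \ge 0$ this bounds the left-hand side above by $\tfrac{n}{n+1}V(\Omega)$, whence $\int_M H\,u_\nu^2\,dA \le \tfrac{n}{n+1}V(\Omega)$. On the other hand the divergence theorem gives $V(\Omega) = \int_\Omega \Delta f\,dV = \int_M u_\nu\,dA$, and the Hopf lemma applied to the subharmonic $f$ shows $u_\nu > 0$ on $M$. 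Using $H_M > 0$, the integral Cauchy--Schwarz inequality then gives
\[
V(\Omega)^2 = \Big(\int_M \sqrt{H}\,u_\nu\cdot\tfrac{1}{\sqrt{H}}\,dA\Big)^2 \le \int_M H\,u_\nu^2\,dA\cdot\int_M\frac{1}{H}\,dA \le \frac{n}{n+1}V(\Omega)\int_M\frac{1}{H}\,dA.
\]
Dividing through and substituting $H = nH_M$ produces $\int_M \tfrac{1}{H_M}\,dA \ge (n+1)V(\Omega)$, as claimed.

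Finally, the equality case is where the real work lies. Equality forces three conditions at once: $\hess{f} = \tfrac{1}{n+1}g$ throughout $\Omega$ (equality in the tensorial Cauchy--Schwarz), $\Ric(\nabla f,\nabla f) \equiv 0$, and $H u_\nu \equiv \text{const}$ on $M$ (equality in the integral Cauchy--Schwarz). The decisive one is $\hess{f} = \tfrac{1}{n+1}g$: differentiating $|\nabla f|^2$ shows $|\nabla f|^2 = \tfrac{2}{n+1}f + a$, so $f$ has a single interior critical point, and a function whose Hessian is a positive constant multiple of the metric is concircular. By the classical rigidity for such functions (of Obata--Tashiro type), the level sets of $f$ are geodesic spheres and the warping is forced to be the Euclidean one, so $\Omega$ is isometric to a round Euclidean ball with $f$ a radial quadratic; the remaining conditions are then automatically consistent. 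I expect the main obstacle to be precisely this last step --- upgrading the pointwise identity $\hess{f} = \tfrac{1}{n+1}g$ to a global isometry with a ball and checking that all boundary data agree --- rather than the inequality itself, which falls out of Reilly's formula and two applications of Cauchy--Schwarz.
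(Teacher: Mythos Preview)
Your argument is correct and is precisely Ros's original proof via Reilly's formula. Note, however, that the paper does not itself prove Theorem~A: it is quoted in the introduction as a result of Ros \cite{Ros1987compacthighercurv}, and no proof is supplied. What the paper does prove are the capillary analogues (Theorems~\ref{HKineq}, \ref{hkequi} and \ref{geothm}), and those proofs follow exactly the template you have written --- solve an auxiliary boundary value problem, feed it into a Reilly-type identity, then apply H\"older/Cauchy--Schwarz twice --- with the weighted Reilly formula of Li--Xia \eqref{reilly} replacing the classical one, a mixed Dirichlet--Robin problem replacing your pure Dirichlet problem, and the equality analysis reducing to $\overline\nabla^2 f - f\,\overline g = \tfrac{1}{n+1}\overline g$, which restricted to $\Sigma$ gives $f_\nu h_{ij} = \tfrac{1}{n+1}g_{ij}$ and hence umbilicity directly, without the Obata--Tashiro rigidity step you invoke for the closed case.
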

	
	There are several results on (weighted) Heintze-Karcher type inequalities for embedded closed hypersurfaces in different Riemannian manifolds. In \cite{Brendle2013CMC}, Brendle proved a weighted Heintze-Karcher type inequality for embedded, closed mean convex hypersurfaces in a family of warped product spaces. In \cite{QiuXia2015generalreilly}, Qiu and Xia proved a weighted Heintze-Karcher type inequality for the case in which the ambient space is a Riemannian manifold with a negative lower bound on its sectional curvature. In \cite{LiXia2019substatic}, Li and Xia proved a weighted Heintze-Karcher type inequality when the ambient space is a sub-static manifold.

	For free boundary hypersurfaces, the second author proved a Heintze-Karcher type inequality for those supported on totally geodesics hyperplane in a space form in \cite{Pyo2019Rigidity}. Guo and Xia proved the inequality for those supported on horospheres and equidistant hypersurfaces in \cite{guo2022partially}. In \cite{WangXia2019Stablecapillary}, Wang and Xia prove the case for those supported on geodesic balls.
	
	Recently in the paper \cite{jia2023heintze}, Jia, Xia and Zhang proved a Heintze-Karcher type inequality for capillary hypersurfaces in Euclidean half space. Furthermore, they extended their results in Euclidean wedge and anisotropic half space (See \cite{jia2022heintze} and \cite{jia2023alexandrov}). Inspired by their result, we will prove a Heintze-Karcher type inequality for capillary hypersurfaces in $\mathbb H^{n+1}$. 
	
	In Poincar\'{e} ball model $(\mathbb B^{n+1}, \frac{4}{(1-|x|^2)^2}\delta)$, let $\Sigma$ be a compact, embedded capillary hypersurface in $$\mathbb B^{n+1}_+=\{x\in\mathbb B^{n+1}:\delta(x,E_{n+1})\geq0\}$$ supported on a totally geodesic plane $$P=\{x\in\mathbb B^{n+1}:\delta(x,E_{n+1})=0\}.$$ Denote $\Omega$ the region enclosed by $\Sigma$ and $P$, and $T\subset P$ the region in $P$ with $\partial\Omega=\Sigma\cup T$. Denote $V_0=\frac{1+|x|^2}{1-|x|^2}$, we have the following Heintze-Karcher type inequality.
	\begin{thm}\label{HKineq}
		Let $\Sigma\subset\mathbb B^{n+1}_+$ be a compact, embedded capillary hypersurface supported on $P$. Let $\Omega$ be a domain enclosed by $\Sigma$ and $T\subset P$. If the contact angle $\theta\in(0,\frac{\pi}{2}]$, and the mean curvature $H_1>0$ on $\Sigma$, we have 
		\begin{align}\label{HKineq1}
			\int_\Sigma\frac{V_0}{H_1}dA\geq(n+1)\int_\Omega V_0d\Omega+n\cot\theta\frac{\left(\int_TV_0dA_T\right)^2}{\int_{\partial\Sigma}V_0ds}.
		\end{align}
		In addition, the equality holds if and only if $\Sigma$ is umbilical.
	\end{thm}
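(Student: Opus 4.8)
The plan is to run a weighted Reilly–formula argument in the spirit of Ros's proof of Theorem A, but adapted to the conformal weight $V_0$ and to the mixed boundary $\partial\Omega=\Sigma\cup T$. The first step is to identify the weight: under $|x|=\tanh(r/2)$, where $r$ is the hyperbolic distance to the center of the ball, one has $V_0=\frac{1+|x|^2}{1-|x|^2}=\cosh r$. Thus $V_0$ is the standard static potential of $\mathbb H^{n+1}$ and satisfies $\bar\nabla^2 V_0=V_0\,\bar g$, hence $\bar\Delta V_0=(n+1)V_0$; since $\overline{\Ric}=-n\bar g$, this gives the static equation
\begin{equation*}
\bar\Delta V_0\,\bar g-\bar\nabla^2 V_0+V_0\,\overline{\Ric}=\big((n+1)-1-n\big)V_0\,\bar g=0 .
\end{equation*}
This identity is the structural heart of the argument: it forces the interior curvature/Hessian-of-weight term in the generalized (weighted) Reilly formula of Qiu–Xia to vanish. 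I would also record two auxiliary geometric facts. Since $P$ passes through the origin, it is an isometric copy of $\mathbb H^n$ and is totally geodesic, so its second fundamental form and mean curvature vanish and $V_0|_P$ satisfies $\bar\nabla^2_P V_0=V_0\,g_P$, i.e. $\Delta_P V_0=n\,V_0$; integrating this over $T$ relates $\int_T V_0\,dA_T$ to an edge integral over $\partial\Sigma=\partial T$. At the contact edge, writing $\nu$ for the unit normal of $\Sigma$, $\bar N$ for the outward normal of $P$ along $\Omega$, and $\mu,\bar\mu$ for the outward conormals of $\Sigma$ and $T$, the capillary condition yields the rotation relations $\mu=\cos\theta\,\bar\mu+\sin\theta\,\bar N$ and $\nu=\sin\theta\,\bar\mu-\cos\theta\,\bar N$ (signs fixed by orientation); these are the only place where $\cot\theta=\cos\theta/\sin\theta$ can enter.

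Next I would set up and solve the mixed boundary value problem for the weighted Laplacian $Lf:=\divv(V_0\bar\nabla f)=V_0\bar\Delta f+\langle\bar\nabla V_0,\bar\nabla f\rangle$, namely
\begin{equation*}
Lf=(n+1)V_0\ \text{ in }\Omega,\qquad f=\mathrm{const}\ \text{ on }\Sigma,\qquad \partial_{\bar N}f=(\text{const})\cos\theta\ \text{ on }T,
\end{equation*}
with the value on $\Sigma$ and the Neumann datum on $T$ normalized so that the compatibility condition coming from $\int_\Omega Lf=\int_{\partial\Omega}V_0\,\partial_\nu f$ is met; standard elliptic theory for this oblique/mixed problem, together with the usual corner-regularity discussion, produces a solution $f$. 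The choice of a \emph{constant} Dirichlet value on $\Sigma$ is deliberate: it makes $\bar\nabla^\Sigma f=0$, so that the $\Sigma$-boundary integral collapses to a multiple of $\int_\Sigma V_0\,H\,f_\nu^2$. The weighted PDE $Lf=(n+1)V_0$ is chosen so that the divergence identity reads cleanly as $\int_\Sigma V_0 f_\nu+\int_T V_0 f_{\bar N}=(n+1)\int_\Omega V_0\,d\Omega$, reproducing the first term on the right of \eqref{HKineq1}.

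I then apply the weighted Reilly formula to $f$. By the static equation the bulk curvature term vanishes and one is left with
\begin{equation*}
\int_\Omega V_0\Big((\bar\Delta f)^2-|\bar\nabla^2 f|^2\Big)\,d\Omega=\mathcal B_\Sigma+\mathcal B_T,
\end{equation*}
where $\mathcal B_\Sigma,\mathcal B_T$ are the boundary contributions. On the left I apply the pointwise trace inequality $|\bar\nabla^2 f|^2\ge\frac{(\bar\Delta f)^2}{n+1}$, writing $\bar\nabla^2 f=\mathring{\bar\nabla}^2 f+\frac{\bar\Delta f}{n+1}\bar g$, after controlling the drift contribution $\langle\bar\nabla V_0,\bar\nabla f\rangle$ through the weighted Bochner computation; this bounds the left side by a multiple of $\int_\Omega V_0$. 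By the Dirichlet condition $\mathcal B_\Sigma$ reduces to $\int_\Sigma V_0\,H\,f_\nu^2\,dA$; since $T$ is totally geodesic ($\mrm{II}_T=0$ and $H_T=0$), $\mathcal B_T$ reduces to an edge integral along $\partial\Sigma$, which by the Neumann condition carries the factor $\cos\theta$. The inequality is then closed by Cauchy–Schwarz on $\Sigma$, pairing $\sqrt{V_0 H_1}\,f_\nu$ with $\sqrt{V_0/H_1}$ to bring in $\int_\Sigma\frac{V_0}{H_1}\,dA$, together with a Cauchy–Schwarz over $\partial\Sigma$ using $\int_{\partial\Sigma}V_0\,ds$ and the identity $n\int_T V_0\,dA_T=\int_{\partial\Sigma}\partial_{\bar\mu}V_0\,ds$ to assemble $\frac{(\int_T V_0\,dA_T)^2}{\int_{\partial\Sigma}V_0\,ds}$.

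The main obstacle I expect is the bookkeeping at the corner $\partial\Sigma$: one must verify that the conormal rotation relations convert the edge contribution of $\mathcal B_T$ into exactly the coefficient $n\cot\theta$ in front of $\frac{(\int_T V_0\,dA_T)^2}{\int_{\partial\Sigma}V_0\,ds}$, and that the drift term $\langle\bar\nabla V_0,\bar\nabla f\rangle$, together with the weight-derivative terms in the Reilly boundary integrand, do not spoil the final sign. For the equality case, tracing the two Cauchy–Schwarz steps forces $H_1$ to be constant along $\Sigma$, while equality in the trace inequality forces the Obata-type identity $\bar\nabla^2 f=\frac{\bar\Delta f}{n+1}\,\bar g$ on $\Omega$. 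Since the latter makes every level set of $f$ totally umbilical and $\Sigma$ is itself a level set (being a constant-value set of $f$), $\Sigma$ is umbilical, which is the claimed rigidity.
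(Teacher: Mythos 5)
Your overall strategy coincides with the paper's (a mixed Dirichlet--Neumann problem on $\Omega$, the generalized Reilly formula with weight $V_0$, a trace inequality, and H\"older on $\Sigma$), and several ingredients are right: the static identity $\overline\Delta V_0\,\overline g-\overline\nabla^2V_0+V_0\overline{\Ric}=0$, the fact that $(V_0)_{\overline N}=0$ on $T\subset P$, the rotation relations at the contact edge, and the equality analysis via an Obata-type identity restricted to the level set $\Sigma$. However, the central analytic step as you have written it fails. The identity $\int_\Omega V_0\bigl((\overline\Delta f)^2-|\overline\nabla^2 f|^2\bigr)=\mathcal B_\Sigma+\mathcal B_T$ with vanishing interior term is \emph{not} what the static equation gives you: the substatic combination kills the bulk term only inside the Li--Xia formula, whose interior integrand is $\bigl(\overline\Delta f-\tfrac{\overline\Delta V_0}{V_0}f\bigr)^2-\bigl|\overline\nabla^2 f-\tfrac{\overline\nabla^2V_0}{V_0}f\bigr|^2=\bigl(\overline\Delta f-(n+1)f\bigr)^2-\bigl|\overline\nabla^2 f-f\overline g\bigr|^2$ and whose curvature term is evaluated on the modified gradient $\overline\nabla f-\tfrac{\overline\nabla V_0}{V_0}f$. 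If you instead integrate $V_0\bigl((\overline\Delta f)^2-|\overline\nabla^2 f|^2\bigr)$ by parts directly, the interior tensor that appears is $\overline\nabla^2V_0-\overline\Delta V_0\,\overline g+V_0\overline{\Ric}=-2nV_0\,\overline g$ (for $V_0\equiv 1$ this reduces to the classical $\overline{\Ric}=-n\overline g$), so you pick up $-2n\int_\Omega V_0|\overline\nabla f|^2$, a term of the wrong sign that nothing in your outline absorbs. The negative Ricci curvature of $\mathbb H^{n+1}$ is cancelled only for the modified quantities, and your ``weighted Bochner computation'' is exactly the step that cannot be carried out as stated.

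Correspondingly, the normalization $\divv(V_0\overline\nabla f)=(n+1)V_0$ is the wrong equation: the trace inequality $\bigl|\overline\nabla^2 f-f\overline g\bigr|^2\ge\tfrac{1}{n+1}\bigl(\overline\Delta f-(n+1)f\bigr)^2$ is only useful when the \emph{modified} Laplacian $\overline\Delta f-(n+1)f$ is constant, which is precisely the paper's choice $\overline\Delta f-(n+1)f=1$ in $\Omega$, with $f=0$ (not merely constant) on $\Sigma$; the zero Dirichlet value is needed both so that $\overline\nabla f-\tfrac{\overline\nabla V_0}{V_0}f$ is normal along $\Sigma$ (collapsing $\mathcal B_\Sigma$ to $\int_\Sigma nV_0H_1f_\nu^2$) and so that restricting $\overline\nabla^2 f-f\overline g=\tfrac1{n+1}\overline g$ to $\Sigma$ yields $f_\nu h_{ij}=\tfrac1{n+1}g_{ij}$. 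The first term of \eqref{HKineq1} then comes from the Green identity $\int_\Omega V_0\,d\Omega=\int_{\partial\Omega}\bigl(V_0f_\nu-f(V_0)_\nu\bigr)$ rather than from a divergence-form PDE, and the $\cot\theta$ term needs no identity such as $\Delta_PV_0=nV_0$ on $T$: it arises from $f_\mu=0$ and $\overline\nu=\tfrac1{\cos\theta}\mu-\tan\theta\overline N$, hence $f_{\overline\nu}=-c_0\tan\theta$ on $\partial\Sigma$, combined with the choice $c_0=-\tfrac{n}{n+1}\cot\theta\int_TV_0\,dA_T\big/\int_{\partial\Sigma}V_0\,ds$. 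With these corrections the remainder of your outline closes as in the paper.
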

	Next, we consider Poincar\'{e} half space model $(\mathbb R^{n+1}_+,\frac{1}{x_{n+1}^2}\delta)$ of the hyperbolic space. Denote the position vector in Poincar\'{e} half space model by $\widetilde x$. Fixing $a$ a vector field in $\mathbb H^{n+1}$ which is constant with respect to $(\mathbb R^{n+1},\delta_+)$ and $0<\phi<\frac{\pi}{2}$, we consider a umbilical hypersurface $L_{\phi,a}$ defined by
	\begin{equation}\label{defiequiL}
		L_{\phi,a}=\{\widetilde x\in\mathbb R^{n+1}_{+}:x_{n+1}-1=\tan\phi x_a\},
	\end{equation}
	where $x_a=\delta(\widetilde x,a)$ and $a$ is a vector field in $\mathbb H^{n+1}$ which is constant with respect to $(\mathbb R^{n+1}_+,\delta)$ and satisfies $\delta(a, E_{n+1})=0$ and $\delta(a,a)=1$.It is well known that $L_{\phi,a}$ is a umbilical hypersurface with principal curvature $0<\cos\phi\leq1$. We define $B^{\mrm{int}}_{\phi,a}$ as follows:
	$$B^{\mrm{int}}_{\phi,a}=\{\widetilde x\in\mathbb H^{n+1}:x_{n+1}-1\geq\tan\phi x_a\;\}.$$
	In $B_\phi^{\mrm{int}}$, we consider compact embedded capillary hypersurfaces supported on $L_{\phi,a}$. 
	
	Analogously, denote $V_{n+1}=\frac{1}{x_{n+1}}$ and we have the following Heintze-Karcher type inequality
	\begin{thm}\label{hkequi}
		Let $0\leq\phi<\frac{\pi}{2}$ and $\Sigma\subset B^{\mrm{int}}_{\phi,a}$ be a compact, embedded capillary hypersurface. The supporting hypersurface is $L_{\phi}$. Let $\Omega$ be the domain bounded by $\Sigma$ and $T\subset L_{\phi,a}$. If the contact angle $\theta\in(0,\frac{\pi}{2}]$, and the mean curvature $H_1>0$ on $\Sigma$, we have
		\begin{align}\label{equiheintze}
			\int_\Sigma\frac{V_{n+1}}{H_1}dA\geq(n+1)\int_\Omega V_{n+1}d\Omega+n\cos\theta\frac{\left(\int_T V_{n+1}dA_T\right)^2}{\int_{\partial\Sigma}\overline g(-\cos\phi \widetilde x+\sin\phi a,\mu)ds}
		\end{align}
		where $\mu$ is the outward unit conormal vector field of $\partial\Sigma$ in $\Sigma$. In addition, the equality holds if and only if $\Sigma$ is umbilical.
	\end{thm}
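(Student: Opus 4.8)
The plan is to run the weighted Reilly-formula argument in the half-space model; the essential new feature compared with the totally geodesic case of Theorem~\ref{HKineq} is that the supporting hypersurface $L_{\phi,a}$ is umbilical with \emph{nonzero} principal curvature $\cos\phi$, so this curvature must be tracked through every boundary computation. First I would record the ``static'' properties of the weight $V_{n+1}=1/x_{n+1}$: a direct computation in $(\mathbb R^{n+1}_+,\delta)$ gives $\nabla^2 V_{n+1}=V_{n+1}\,\overline g$ and $\Delta V_{n+1}=(n+1)V_{n+1}$ on $\Omega$, where $\nabla,\Delta$ denote the hyperbolic connection and Laplacian. Since $\overline{\Ric}=-n\,\overline g$, the tensor $\nabla^2 V_{n+1}-(\Delta V_{n+1})\overline g+V_{n+1}\overline{\Ric}$ vanishes identically. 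This is precisely the critical situation in which the generalized Reilly formula of Qiu--Xia loses its bulk curvature term and collapses to a pure boundary identity.

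Next I would solve the mixed boundary value problem
\begin{equation*}
\divv(V_{n+1}\nabla f)=(n+1)V_{n+1}\ \text{ in }\Omega,\qquad f=0\ \text{ on }\Sigma,\qquad \partial_{\bar N}f=c\ \text{ on }T,
\end{equation*}
where $\bar N$ is the outward conormal of $T$ in $\Omega$ and $c$ is a constant to be determined. Integrating the equation yields the compatibility relation $\int_\Sigma V_{n+1}\partial_\nu f\,dA+c\int_T V_{n+1}\,dA_T=(n+1)\int_\Omega V_{n+1}\,d\Omega$. Substituting $f$ into the Reilly formula and using the vanishing of the bulk term leaves an identity equating $\int_\Omega V_{n+1}\big((\Delta f)^2-|\nabla^2 f|^2\big)$ to boundary integrals over $\Sigma$, over $T$, and along the edge $\partial\Sigma$. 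On $\Sigma$ the Dirichlet condition forces $\nabla f=(\partial_\nu f)\nu$, so its contribution reduces to $\int_\Sigma nH_1\,V_{n+1}(\partial_\nu f)^2\,dA$; discarding the nonnegative Hessian defect through $|\nabla^2 f|^2\ge(\Delta f)^2/(n+1)$ and applying Cauchy--Schwarz to this $\Sigma$-integral, with its numerator $\int_\Sigma V_{n+1}\partial_\nu f$ replaced via the compatibility relation, produces the left-hand side $\int_\Sigma V_{n+1}/H_1\,dA$ of \eqref{equiheintze} together with the volume term $(n+1)\int_\Omega V_{n+1}\,d\Omega$.

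The main obstacle is the analysis of the remaining contributions on $T$ and along $\partial\Sigma$. On $T\subset L_{\phi,a}$ the second fundamental form equals $\cos\phi\,g_T$, so the umbilicity and the value $\cos\phi$ enter every integration by parts there, and along $\partial\Sigma$ the constant contact angle $\theta$ relates the conormals of $\Sigma$ and of $T$, which is how $\cos\theta$ is generated. I would decompose the $T$-term using these relations and the chosen Neumann data, and then apply a second Cauchy--Schwarz inequality pairing $\int_T V_{n+1}\,dA_T$ against the edge integral $\int_{\partial\Sigma}\overline g(-\cos\phi\,\widetilde x+\sin\phi\,a,\mu)\,ds$; this is exactly what produces the quotient $n\cos\theta\,\big(\int_T V_{n+1}\,dA_T\big)^2/\int_{\partial\Sigma}\overline g(-\cos\phi\,\widetilde x+\sin\phi\,a,\mu)\,ds$, the constant $c$ being fixed by optimizing this pairing. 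The field $X=-\cos\phi\,\widetilde x+\sin\phi\,a$ should be identified as the conformal field adapted to $L_{\phi,a}$: its normal component along $L_{\phi,a}$ vanishes and, up to normalization, $\nabla X=V_{n+1}\,\overline g$, which explains why it is the natural weight on $\partial\Sigma$. Verifying these properties of $X$ and matching all the constants $\cos\phi$, $\cos\theta$ and $n$ is the delicate bookkeeping.

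Finally, for the rigidity statement I would trace the equalities backward. Equality forces $|\nabla^2 f|^2=(\Delta f)^2/(n+1)$ pointwise, i.e. $\nabla^2 f=\tfrac{\Delta f}{n+1}\overline g$, together with equality in the Cauchy--Schwarz step on $\Sigma$; the umbilicity of the level sets of $f$ then forces $\Sigma$ to be totally umbilical, and one checks that such a $\Sigma$ indeed meets $L_{\phi,a}$ at the constant angle $\theta$, so the equality case is exactly the capillary umbilical hypersurfaces, as claimed.
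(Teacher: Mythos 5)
Your skeleton (generalized Reilly formula applied to a mixed boundary value problem, then one Cauchy--Schwarz on $\Sigma$) is the paper's, but two of your specific choices would make the argument fail. The decisive one is the boundary condition on $T$: it cannot be pure Neumann. Since $(V_{n+1})_{\overline N}=\cos\phi\,V_{n+1}\neq 0$ along $L_{\phi,a}$ --- this is exactly what distinguishes this theorem from the totally geodesic case of Theorem \ref{HKineq}, where $(V_0)_{\overline N}=0$ on $P$ --- every $T$-term in \eqref{reilly} involves $f_{\overline N}-\frac{(V_{n+1})_{\overline N}}{V_{n+1}}f=f_{\overline N}-\cos\phi f$. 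The paper therefore imposes the Robin condition $f_{\overline N}-\cos\phi f=c_0$ on $T$, together with $\overline\Delta f-(n+1)f=1$ so that $\overline\Delta f-\frac{\overline\Delta V_{n+1}}{V_{n+1}}f\equiv 1$ (your equation $\divv(V_{n+1}\overline\nabla f)=(n+1)V_{n+1}$ does not achieve this). With your condition $f_{\overline N}=c$, the factor $c-\cos\phi f$ is not constant on $T$, so $\int_T V(f_{\overline N}-\cos\phi f)\bigl(\Delta f-\tfrac{\Delta V}{V}f\bigr)\,dA_T$ and the gradient cross-term do not collapse to $c_0\int_T(V\Delta f-f\Delta V)\,dA_T$ and cannot be pushed to $\partial\Sigma$ by Green's formula, while the quadratic term $\int_T nV\cos\phi\,(f_{\overline N}-\cos\phi f)^2\,dA_T$ stays uncomputable. (Also, the combination that vanishes is $\overline\Delta V\,\overline g-\overline\nabla^2V+V\,\overline{\Ric}$, not the one you wrote.)

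The second problem is your reading of the denominator field. The vector $-\cos\phi\,\widetilde x+\sin\phi\,a$ is \emph{not} the conformal field adapted to $L_{\phi,a}$: both $\widetilde x$ and $a$ are Killing in the half-space model by \eqref{killingx} and \eqref{killinga}, so this combination is Killing, and its normal component along $L_{\phi,a}$ does not vanish --- it equals $V_{n+1}$, which is precisely why $\int_TV_{n+1}\,dA_T=-\int_\Sigma\overline g(-\cos\phi\,\widetilde x+\sin\phi\,a,\nu)\,dA$ and why it is the correct weight on $\partial\Sigma$. The field tangent to $L_{\phi,a}$ with $\tfrac12\mathcal L_X\overline g=V_{n+1}\overline g$ is $X_{n+1}=\widetilde x-E_{n+1}$, and the identity \eqref{equiintxnu} built from it is what converts the quadratic $T$-term into an edge integral. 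Relatedly, there is no ``second Cauchy--Schwarz'' on the $T$/edge contributions: with the Robin data they are evaluated \emph{exactly}, and $c_0$ is the constant for which this exact evaluation equals $-\bigl(\tfrac{n}{n+1}\bigr)^2\cos\theta\,\bigl(\int_TV_{n+1}dA_T\bigr)^2\big/\int_{\partial\Sigma}\overline g(-\cos\phi\,\widetilde x+\sin\phi\,a,\mu)ds$; the only inequalities in the whole proof are the trace inequality for the Hessian and the single H\"older step on $\Sigma$. Your equality analysis is essentially right once these points are repaired: one gets $\overline\nabla^2f-f\overline g=\tfrac{1}{n+1}\overline g$, and restricting to $\Sigma$ where $f=0$ yields $f_\nu h_{ij}=\tfrac{1}{n+1}g_{ij}$, hence umbilicity.
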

	The classical Alexandrov type theorem says that the geodesic balls are the only closed embedded hypersurfaces with constant mean curvature (CMC) in space form. It has been proved using a method called moving planes (see \cite{Aleksandrov1958uniqueness}). 
	
	Another powerful tool to prove the Alexandrov type theorem is the method provided by Ros. In 1986, Ros used a Heintze-Karcher type formula in \cite{Ros1987compacthighercurv} to prove a higher-order version of an Alexandrov type theorem, which says that embedded hypersurfaces with constant $k$-th mean curvature in Euclidean space can only be a sphere. 
	For the general ambient space, Brendle \cite{Brendle2013CMC} proved that any closed embedded CMC hypersurface in a family of space warped product spaces can only be a slice. 
	
	As we mentioned above, there exists certain Heintze-Karcher type inequalities on free boundary hypersurfaces on different supporting hypersurfaces, Wang and Xia  \cite{WangXia2019Stablecapillary} proved the Alexandrov type theorem for free boundary hypersurface supported on geodesic spheres in space forms. Moreover, for free boundary hypersurfaces supported on horospheres and equidistant hypersurfaces, we refer to Guo and Xia  \cite{guo2022partially}. The second author \cite{Pyo2019Rigidity} also studied the case of free boundary hypersurfaces supported on totally geodesic hyperplane in space forms. For capillary hypersurface, Jia, Xia and Zhang (see \cite{jia2023heintze}) prove an Alexandrov type theorem for capillary hypersurfaces in the Euclidean half space and Euclidean ball. Jia, Wang, Xia and Zhang prove the capillary case of the support hypersurface being Euclidean wedges and the geodesic plane in anisotropic Euclidean half space (see \cite{jia2022heintze} and \cite{jia2023alexandrov}).
	In this paper, we will prove the following Alexandrov type theorems for capillary hypersurfaces in the hyperbolic space.
	\begin{thm}\label{alex}
		Let $\Sigma$ be a compact embedded CMC hypersurface in $\mathbb B^{n+1}_+$ supported on a the totally geodesic plane $P$, and the contact angle satisfies $\theta\in(0,\frac{\pi}{2}]$. Then $\Sigma$ is totally umbilical except for being totally geodesic.
	\end{thm}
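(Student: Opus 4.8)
The plan is to follow Ros' strategy: combine the Heintze--Karcher inequality of Theorem~\ref{HKineq} with a weighted Minkowski-type identity and show that, for a hypersurface of constant mean curvature, the two together force equality in \eqref{HKineq1}, so that umbilicity follows from the equality case of Theorem~\ref{HKineq}.

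First I would introduce the conformal vector field $X_0=\overline\nabla V_0$ on $\mathbb H^{n+1}$. Since $V_0=\frac{1+|x|^2}{1-|x|^2}$ is radial with $\hess{V_0}=V_0\,\overline g$, the field $X_0$ is a closed conformal field satisfying $\overline\nabla_Y X_0=V_0 Y$ and $\divv X_0=(n+1)V_0$, and it is tangent to the totally geodesic plane $P$. A direct computation gives $\divv_\Sigma(X_0^\top)=nV_0-nH_1\langle X_0,\nu\rangle$, so integrating over $\Sigma$ and using the divergence theorem yields $\int_\Sigma(nV_0-nH_1\langle X_0,\nu\rangle)\,dA=\int_{\partial\Sigma}\langle X_0,\mu\rangle\,ds$. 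Writing the outward conormal along $\partial\Sigma$ as $\mu=\cos\theta\,\bar\mu+\sin\theta\,\bar N$, using $\langle X_0,\bar N\rangle=0$ on $P$ together with the divergence theorem on $T\subset P$ (where $\divv_P X_0=nV_0$), this reduces to the weighted Minkowski formula
\begin{equation*}
\int_\Sigma\bigl(V_0-H_1\langle X_0,\nu\rangle\bigr)\,dA=\cos\theta\int_T V_0\,dA_T.
\end{equation*}

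Assuming now $H_1\equiv H_0>0$ and inserting the divergence theorem on $\Omega$, namely $\int_\Sigma\langle X_0,\nu\rangle\,dA=(n+1)\int_\Omega V_0\,d\Omega$ (the $T$-contribution vanishing because $X_0\perp\bar N$), I obtain the exact identity
\begin{equation*}
\frac{1}{H_0}\int_\Sigma V_0\,dA=(n+1)\int_\Omega V_0\,d\Omega+\frac{\cos\theta}{H_0}\int_T V_0\,dA_T.
\end{equation*}
Its left-hand side is precisely the left-hand side of \eqref{HKineq1} when $H_1\equiv H_0$, so equality in \eqref{HKineq1} is equivalent to matching the two boundary terms, i.e.\ to $\sin\theta\int_{\partial\Sigma}V_0\,ds=nH_0\int_T V_0\,dA_T$. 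In the free boundary case $\theta=\tfrac{\pi}{2}$ both boundary terms vanish, equality is immediate, and $\Sigma$ is umbilical by Theorem~\ref{HKineq}.

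The main obstacle is the genuinely capillary range $\theta\in(0,\tfrac{\pi}{2})$, where one inequality, $\sin\theta\int_{\partial\Sigma}V_0\,ds\ge nH_0\int_T V_0\,dA_T$, already follows from Theorem~\ref{HKineq}, while the reverse inequality must be proved independently of umbilicity. My plan is to study $\partial\Sigma$ as a closed hypersurface of $P\cong\mathbb H^n$: since $P$ is totally geodesic, the constant contact angle gives the relation $\sin\theta\,\hat H=nH_0-h_{\mu\mu}$ between the mean curvature $\hat H$ of $\partial\Sigma$ in $P$ and the normal curvature $h_{\mu\mu}$ of $\Sigma$ in the conormal direction. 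Substituting this into the Minkowski formula for $\partial\Sigma$ in $P$, $(n-1)\int_{\partial\Sigma}V_0\,ds=\int_{\partial\Sigma}\hat H\langle X_0,\bar\mu\rangle\,ds$, and using $\int_{\partial\Sigma}\langle X_0,\bar\mu\rangle\,ds=n\int_T V_0\,dA_T$, the desired inequality is equivalent to $\int_{\partial\Sigma}(h_{\mu\mu}-H_0)\langle X_0,\bar\mu\rangle\,ds\ge0$. Proving this last estimate—exploiting that the conormal $\mu$ is a principal direction of $\Sigma$ along $\partial\Sigma$ for an umbilical support and that $\Sigma$ has constant mean curvature—is the delicate point I expect to be hardest. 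Once it is established, equality holds in \eqref{HKineq1}, so Theorem~\ref{HKineq} forces $\Sigma$ to be totally umbilical; as $H_0>0$ excludes the totally geodesic case, $\Sigma$ is a totally umbilical hypersurface other than a totally geodesic one, as claimed.
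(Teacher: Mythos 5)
Your overall strategy is the paper's: combine the Heintze--Karcher inequality of Theorem \ref{HKineq} with the $k=1$ Minkowski formula to force equality, then invoke the equality case. Your derivation of the exact identity
\begin{equation*}
\frac{1}{H_0}\int_\Sigma V_0\,dA=(n+1)\int_\Omega V_0\,d\Omega+\frac{\cos\theta}{H_0}\int_T V_0\,dA_T
\end{equation*}
is correct and is equivalent to the paper's use of Proposition \ref{propP} with $k=1$ together with \eqref{v0onT}. However, the step you flag as ``the delicate point I expect to be hardest'' is a genuine gap, and the route you sketch for it will not close: there is no reason to expect $\int_{\partial\Sigma}(h_{\mu\mu}-H_0)\,\overline g(x,\overline\nu)\,ds\geq 0$ to be accessible before umbilicity is known, since nothing controls the sign of $h_{\mu\mu}-H_0$ pointwise for a general CMC capillary hypersurface. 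The missing idea is that the relation $\sin\theta\int_{\partial\Sigma}V_0\,ds=nH_0\int_T V_0\,dA_T$ is not an inequality to be fought for but an exact identity coming from the Killing vector field $Y_{n+1}=\frac12(1+|x|^2)E_{n+1}-\delta(x,E_{n+1})x$: integrating $\overline{\divv}(Y_{n+1})=0$ over $\Omega$ gives \eqref{v0onT}, $\int_T V_0\,dA_T=\int_\Sigma\overline g(Y_{n+1},\nu)\,dA$, and integrating the tangential part of the Killing equation over $\Sigma$ gives \eqref{v0onparSig}, $\sin\theta\int_{\partial\Sigma}V_0\,ds=nH_1\int_\Sigma\overline g(Y_{n+1},\nu)\,dA$; for constant $H_1=H_0$ these combine to exactly the identity you need. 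With that, equality in \eqref{HKineq1} follows and the argument closes.

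A second, smaller gap: you simply assume $H_0>0$, but the theorem does not hypothesize mean convexity, and Theorem \ref{HKineq} cannot be applied without it. The paper supplies this by a touching argument: sweeping the family of equidistant hypersurfaces $S_t$ toward $P$, compactness of $\partial\Sigma$ forces the first contact to occur at an interior point of $\Sigma$, producing a point where all principal curvatures exceed those of $S_{t_0}$, hence $H_1\equiv H_1(p)>0$. You need some such argument (or an added hypothesis) before invoking the Heintze--Karcher inequality.
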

	\begin{thm}\label{alexint}
		Let $\Sigma$ be a compact embedded CMC capillary hypersurface in $B^{\mrm{int}}_{\phi,a}$ supported on $L_{\phi,a}$, and the contact angle satisfies $\theta\in(0,\frac{\pi}{2}]$. In addition, we assume  $\phi+\theta>\frac{\pi}{2}$ for $\phi>0$ or $\theta=\frac{\pi}{2}$ for $\phi=0$. Then $\Sigma$ is umbilical except for being totally geodesic. 
	\end{thm}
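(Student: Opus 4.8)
The plan is to run Ros's scheme, with the Heintze--Karcher inequality of Theorem~\ref{hkequi} as the basic inequality and a triple of ambient vector fields supplying the exact identities that force \eqref{equiheintze} to be saturated. Write $V=V_{n+1}=x_{n+1}^{-1}$. By the constant mean curvature hypothesis the left-hand side of \eqref{equiheintze} is $\frac{1}{H_1}\int_\Sigma V\,dA$, so everything reduces to evaluating $\int_\Sigma V\,dA$ and comparing it with the right-hand side. One preliminary point, addressed last, is that Theorem~\ref{hkequi} requires $H_1>0$; securing this positivity is exactly where the hypothesis $\phi+\theta>\frac{\pi}{2}$ (resp.\ $\theta=\frac{\pi}{2}$ for $\phi=0$) will be spent.

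I would introduce three fields. First, the conformal field $X=\overline\nabla V$: a direct computation in the half-space model gives $\overline\nabla^2 V=V\,\overline g$, so $V$ is a static potential and $\overline\nabla X=V\,\overline g$; moreover $|X|_{\overline g}=V$, and for a suitable unit normal $\overline N$ of $L_{\phi,a}$ one finds $\overline g(X,\overline N)=-\cos\phi\,V$ along $L_{\phi,a}$. Second, the field $Y=-\cos\phi\,\widetilde x+\sin\phi\,a$ appearing in the denominator of \eqref{equiheintze}: since the Euclidean dilation $\widetilde x$ and the horizontal translation $a$ both generate isometries of $\mathbb H^{n+1}$, $Y$ is a Killing field, and using \eqref{defiequiL} one checks that $\overline g(Y,\overline N)=-V$ along $L_{\phi,a}$. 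Third, the combination
\begin{equation*}
W=X-\cos\phi\,Y,
\end{equation*}
which is tangent to $L_{\phi,a}$ (the two boundary values cancel, so $\overline g(W,\overline N)=0$ on $T$), whose symmetric covariant derivative is still $V\,\overline g$ because the Killing part is antisymmetric, and which therefore satisfies $\divv W=(n+1)V$ in $\Omega$ and $\divv_{L}W=nV$ along $L_{\phi,a}$.

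With these fields the bookkeeping closes. The weighted Minkowski identity for $W$ and the divergence theorem for $W$ on $\Omega$—whose $T$-flux vanishes since $W$ is tangent to $L_{\phi,a}$—combine, after using $\divv_{L}W=nV$ and the contact angle to reduce $\int_{\partial\Sigma}\overline g(W,\mu)\,ds$ to $n\cos\theta\int_T V\,dA_T$, into the exact relation
\begin{equation*}
H_1(n+1)\int_\Omega V\,d\Omega=\int_\Sigma V\,dA-\cos\theta\int_T V\,dA_T.
\end{equation*}
In parallel, the Killing property makes the tangential divergence of $Y$ along $\Sigma$ a constant multiple of $\overline g(Y,\nu)$; integrating and using the flux of $Y$ through $\partial\Omega$ gives, under the constant mean curvature hypothesis,
\begin{equation*}
\int_{\partial\Sigma}\overline g(Y,\mu)\,ds=nH_1\int_T V\,dA_T.
\end{equation*}
Substituting this into the right-hand side of \eqref{equiheintze} turns it into $(n+1)\int_\Omega V+\frac{\cos\theta}{H_1}\int_T V$, which is precisely $\frac{1}{H_1}\int_\Sigma V$ by the displayed relation; thus equality holds in \eqref{equiheintze}.

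The main obstacle is the positivity $H_1>0$, needed both to invoke Theorem~\ref{hkequi} and to read off the equality just described. By the displayed relation, $H_1>0$ is equivalent to $\int_\Sigma V\,dA>\cos\theta\int_T V\,dA_T$; when $\theta=\frac{\pi}{2}$ this is automatic, and in general it is exactly here that $\phi+\theta>\frac{\pi}{2}$ enters. I expect to establish it by a comparison argument: sliding the umbilical model capillary hypersurfaces that meet $L_{\phi,a}$ at the angle $\theta$ as barriers, the hypothesis guarantees that the first contact with $\Sigma$ occurs at an interior point, where the maximum principle forces the constant $H_1$ to be positive. Granting $H_1>0$ and the equality in \eqref{equiheintze}, the rigidity clause of Theorem~\ref{hkequi} shows that $\Sigma$ is umbilical; and since a totally geodesic hypersurface has $H_1\equiv0$, in contradiction with $H_1>0$, that case is excluded—this is the meaning of ``except for being totally geodesic.''
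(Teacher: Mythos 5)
Your proposal is correct and follows essentially the same route as the paper: first secure $H_1>0$ by sliding convex umbilical barriers (this is exactly where $\phi+\theta>\frac{\pi}{2}$, resp.\ $\theta=\frac{\pi}{2}$, is spent), then show via a Minkowski-type identity and the flux identities for the Killing fields $\widetilde x$ and $a$ that constancy of $H_1$ forces equality in \eqref{equiheintze}, and conclude umbilicity (and non-geodesy) from the rigidity clause of Theorem \ref{hkequi}; your tangential conformal field $W=\overline\nabla V_{n+1}-\cos\phi\, Y$ differs from the paper's $X_{n+1}=\widetilde x-E_{n+1}$ only by a Killing field, so your displayed identity is precisely the $k=1$ case of Proposition \ref{minpropequiint} combined with \eqref{xST}. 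The one point to tighten is the barrier step: the paper slides equidistant spheres meeting $L_{\phi,a}$ at an angle $\theta_0=\frac{\pi}{2}-\phi+\eta$ \emph{strictly smaller} than $\theta$ (such $\eta\in(0,\phi)$ exists precisely because $\phi+\theta>\frac{\pi}{2}$), which is what forces the first contact to occur at an interior point of $\Sigma$; barriers meeting $L_{\phi,a}$ at exactly the angle $\theta$, as you phrase it, could first touch $\Sigma$ along $\partial\Sigma$ and would require an extra boundary-tangency comparison.
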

	\begin{rem}
		The assumption `` $\phi+\theta>\frac{\pi}{2}$ for $\phi>0$ or $\theta=\frac{\pi}{2}$ for $\phi=0$'' is only used for guaranteeing the existence of the convex point, which can be substituted by assuming that the constant mean curvature $H>0$, see Corollary \ref{coro}. 
	\end{rem}
	Intriguingly, we also prove some other rigidity results on immersed capillary hypersurfaces supported on a totally geodesic plane $P$ in the hyperbolic space. Koh and Lee \cite{koh_lee_2001} used the  Minkowski identity for closed hypersurfaces in Euclidean space to classify the immersed hypersurfaces with constant ratio of higher-order mean curvatures in Euclidean space. Since we will give a Minkowski type formula on capillary hypersurfaces in the Section 3, we can prove the following result:
	\begin{thm}\label{constquo}
		Let $\Sigma\subset \mathbb B^{n+1}_+$ be an immersed capillary hypersurface supported on $P$. Suppose $k>l\geq 1$. If there exists a constant $\alpha$ such that on $\Sigma$,
		\begin{align*}
			\frac{H_k}{H_l}=\alpha,\quad H_l>0,
		\end{align*}
		we have that $\Sigma$ is totally umbilical except for being totally geodesic.
	\end{thm}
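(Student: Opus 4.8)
The plan is to reduce the rigidity to the equality case of the Newton--Maclaurin inequalities by playing off two Minkowski-type formulas of consecutive orders against each other, following the philosophy of Koh and Lee. Unlike Theorems~\ref{alex} and \ref{alexint}, no use is made of embeddedness or of the Heintze--Karcher inequality; the whole argument rests on integral identities and pointwise algebra, which is precisely what permits $\Sigma$ to be merely immersed.

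First I would record, for $r=l$ and $r=k$, the capillary Minkowski-type formula established in Section~3, which for each order has the shape
\begin{equation*}
	\int_\Sigma V_0\,H_{r-1}\,dA=\int_\Sigma \Phi\,H_r\,dA+b_r,
\end{equation*}
where $\Phi$ is the support-type function dual to the conformal field (independent of $r$) and $b_r$ collects the boundary contribution along $\partial\Sigma\subset P$. Feeding the hypothesis $H_k=\alpha H_l$ into the order-$k$ identity turns its bulk term into $\alpha\int_\Sigma\Phi\,H_l\,dA$, which by the order-$l$ identity equals $\alpha\bigl(\int_\Sigma V_0 H_{l-1}\,dA-b_l\bigr)$. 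Hence
\begin{equation*}
	\int_\Sigma V_0\bigl(H_{k-1}-\alpha H_{l-1}\bigr)\,dA=b_k-\alpha\,b_l.
\end{equation*}
The next task is to show the boundary side $b_k-\alpha b_l$ is $\le 0$ (in fact $=0$), which I expect to be the main obstacle: one must exploit the constant contact angle $\theta$ together with the total geodesy of $P$ so that the two boundary fluxes, both restrictions of curvature data to $\partial\Sigma$, combine with the correct sign. This is where the capillary hypotheses enter essentially.

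For the pointwise side I would first argue that the principal curvatures lie in the G\r{a}rding cone $\Gamma_k$, so that $H_1,\dots,H_k>0$. Since $\Sigma$ is compact, a standard argument produces an interior elliptic point at which every $\kappa_i>0$; there $\alpha=H_k/H_l>0$, hence $\alpha>0$ throughout, and combined with $H_l>0$ and the connectedness of $\Sigma$ the known confinement lemma for higher-order curvature problems keeps the principal curvatures inside $\Gamma_k$. Inside $\Gamma_k$ the Newton--Maclaurin inequalities give $H_k H_{l-1}\le H_{k-1}H_l$, with equality if and only if $\Sigma$ is umbilical at that point; dividing by $H_l>0$ and substituting $H_k=\alpha H_l$ yields the pointwise bound $H_{k-1}-\alpha H_{l-1}\ge 0$, again with equality only at umbilical points.

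Finally I would combine the two sides. Since $V_0>0$ and the integrand $H_{k-1}-\alpha H_{l-1}$ is nonnegative, while the displayed identity forces its weighted integral to be $\le 0$, we conclude $H_{k-1}-\alpha H_{l-1}\equiv 0$. Thus equality holds in Newton--Maclaurin everywhere and $\Sigma$ is totally umbilical; the umbilicity is genuine rather than totally geodesic because $H_l>0$ excludes a vanishing second fundamental form. The two delicate points remain the sign analysis of $b_k-\alpha b_l$ and the verification of $\Gamma_k$-membership via the elliptic-point and connectedness argument.
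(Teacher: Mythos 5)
Your overall strategy (play the order-$k$ and order-$l$ Minkowski formulas against each other and reduce to the equality case of Newton--Maclaurin) is the same as the paper's, but the step you yourself flag as ``the main obstacle'' --- showing that the boundary contribution $b_k-\alpha b_l$ is nonpositive --- is precisely the step that is missing, and as you have set it up it cannot be handled as a sign analysis of boundary fluxes. If you write the Minkowski identity in the raw form \eqref{diverx}, the boundary term is $b_r=c_r\cos\theta\int_{\partial\Sigma}S_{r-1;\mu}\,\overline g(x,\overline\nu)\,ds$, and by the companion identity \eqref{diverU} this equals $\cos\theta\int_\Sigma H_{r-1}\,\overline g(Y_{n+1},\nu)\,dA$. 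Hence
\begin{equation*}
	b_k-\alpha b_l=\cos\theta\int_\Sigma\bigl(H_{k-1}-\alpha H_{l-1}\bigr)\,\overline g(Y_{n+1},\nu)\,dA,
\end{equation*}
which involves exactly the quantity $H_{k-1}-\alpha H_{l-1}$ you are trying to control, weighted by the sign-indefinite function $\overline g(Y_{n+1},\nu)$; there is no direct argument that this is $\le 0$. The paper's resolution is not a sign estimate on the boundary side but the observation that the two boundary fluxes cancel identically when one combines the vector fields $x^T$ and $U_{n+1}$, yielding the boundary-free identity \eqref{MinkowskiforP} with the modified interior weight $V_0-\cos\theta\,\overline g(Y_{n+1},\nu)$. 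The whole proof then hinges on the pointwise strict positivity of this weight, which follows from $|\overline g(Y_{n+1},\nu)|\le\sqrt{\overline g(Y_{n+1},Y_{n+1})}<V_0$ on $\mathrm{int}(\Sigma)$, using $\langle x,E_{n+1}\rangle>0$ there. Equivalently: move your right-hand side to the left and prove the combined weight is positive; trying to bound $b_k-\alpha b_l$ on its own is circular.

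Two further remarks. First, your insistence on an elliptic point and $\Gamma_k$-confinement to justify Newton--Maclaurin is more careful than the paper, which invokes $H_{k-1}H_l\ge H_kH_{l-1}$ rather tersely from $H_l>0$ and $\alpha>0$; if you supply the elliptic-point argument (for instance by sweeping with the equidistant family $S_t$ as in the proof of Theorem \ref{alex}), that is a genuine improvement in rigor. Second, your final conclusion is slightly off target: the vanishing of $H_{k-1}-\alpha H_{l-1}$ gives equality in Newton--Maclaurin and hence umbilicity, and the non-totally-geodesic conclusion indeed follows from $H_l>0$; but none of this goes through until the weight positivity above is established.
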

	\begin{rem}\normalfont
		We note that the second author proved the results in Theorem \ref{HKineq}, Theorem \ref{alex} and Theorem \ref{constquo} for the free boundary case in \cite{Pyo2019Rigidity}. Guo and Xia proved the result in Theorem \ref{hkequi} and Theorem \ref{alexint} for the free boundary case in \cite{guo2022partially}.
	\end{rem}
	Let $O\in\mathbb H^{n+1}$ be a point and $r(p)=\mbox{dist}(p,O)$ be the distance function in the hyperbolic space. The metric in the hyperbolic space can also be written as the following warped product form
	\begin{equation*}
		\overline g=dr^2+\sinh rg_{S^n}.
	\end{equation*}
	It can be easily observed that the position vector field $x$ in Pioncar\'{e} ball model satisfies $x=\sinh r\partial r$ if we choose $O$ be the origin of $\mathbb B^{n+1}$.
	A hypersurface $M$ in $\mathbb H^{n+1}$ is called star-shaped if $\overline g(x,\nu)=\overline g(\sinh r\partial r,\nu)>0$.
	It has been proved that in some specific warped product manifolds (including the hyperbolic space), an immersed, compact, closed, star-shaped hypersurface with constant mean curvature must be a geodesic sphere (See \cite[Corollary 5]{Montiel1999indiana}). Now we give a version of capillary hypersurfaces in the hyperbolic space.
	\begin{thm}\label{starshaped}
		Let $\Sigma\hookrightarrow\mathbb B^{n+1}_+$ be a compact, immersed CMC capillary hypersurface supported on $P$. If $\Sigma$ is star-shaped, $\Sigma$ is totally umbilical. 
	\end{thm}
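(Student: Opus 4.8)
The plan is to avoid the Heintze-Karcher inequality of Theorem \ref{HKineq}, since that argument runs through the enclosed domain $\Omega$ and hence needs embeddedness, whereas here $\Sigma$ is merely immersed. Instead I would argue in the spirit of the closed star-shaped case (Montiel, \cite{Montiel1999indiana}), replacing embeddedness by the star-shaped hypothesis. Write $x=\sinh r\,\partial_r$ for the position vector field and $u:=\overline g(x,\nu)$ for the support function, so that star-shapedness means $u>0$. The starting point is the pair of capillary Hsiung-Minkowski formulas established in Section 3, which I record schematically as
\begin{align*}
\int_\Sigma (V_0 - H_1 u)\,dA &= B_0,\\
\int_\Sigma (V_0 H_1 - H_2 u)\,dA &= B_1,
\end{align*}
where $B_0,B_1$ are boundary integrals over $\partial\Sigma$ built from the conormal component $\overline g(x,\mu)$ and the contact angle $\theta$.

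The core step is an elimination using the CMC hypothesis. Since $H_1\equiv H$ is constant, I would multiply the first identity by $H$ and subtract it from the second; the terms $H\int_\Sigma V_0\,dA$ cancel and we obtain
\begin{align*}
\int_\Sigma (H_1^2 - H_2)\,u\,dA = B_1 - H B_0.
\end{align*}
Here the pointwise Newton-MacLaurin inequality gives $H_1^2-H_2=\tfrac{1}{n^2(n-1)}\sum_{i<j}(\kappa_i-\kappa_j)^2\ge 0$, with equality exactly at umbilical points, while star-shapedness gives $u>0$; hence the left-hand integrand is nonnegative. Therefore, if I can show $B_1 - H B_0\le 0$, it follows that $\int_\Sigma (H_1^2-H_2)u\,dA=0$, so $(H_1^2-H_2)u\equiv 0$, and since $u>0$ this forces $H_1^2=H_2$ everywhere, i.e.\ $\Sigma$ is totally umbilical, as desired.

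The main obstacle is thus the boundary bookkeeping $B_1 - H B_0\le 0$, and this is where the geometry of the supporting plane is decisive. Because $P$ is totally geodesic and passes through the origin of the ball model, it is a linear subspace, so along $\partial\Sigma\subset P$ the position vector $x$ is tangent to $P$ and hence $\overline g(x,N)=0$, where $N$ is the unit normal of $P$. Decomposing the normal frame $\{\mu,\nu\}$ against $\{N,\bar\mu\}$ through the contact-angle relations (up to orientation) $\nu=\cos\theta\,N+\sin\theta\,\bar\mu$ and $\mu=\sin\theta\,N-\cos\theta\,\bar\mu$, with $\bar\mu$ the outward conormal of $\partial\Sigma$ in $P$, the tangency $\overline g(x,N)=0$ yields $\overline g(x,\mu)=-\cos\theta\,\overline g(x,\bar\mu)$ and $u|_{\partial\Sigma}=\sin\theta\,\overline g(x,\bar\mu)$.

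Substituting these into $B_0$ and $B_1$, I expect the $\cos\theta$ and $\sin\theta$ factors to arrange so that $B_1-HB_0$ is a boundary integral of a definite sign, vanishing outright in the free-boundary case $\theta=\tfrac{\pi}{2}$. Verifying that the capillary boundary terms of Section 3 indeed combine with this sign — in particular that the tangential (intrinsic) curvature contributions of $\partial\Sigma$ inside $P$ do not spoil it — is the delicate point I expect to occupy most of the work; everything else is the routine elimination and the Newton-MacLaurin inequality above.
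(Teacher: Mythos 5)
Your overall strategy --- eliminate between the $k=1$ and $k=2$ capillary Minkowski formulas using the constancy of $H_1$, then invoke Newton--MacLaurin together with $\overline g(x,\nu)>0$ --- is sound, works for merely immersed $\Sigma$, and is in substance the same computation as the paper's proof (which integrates $\Delta G$ for $G=nH_1V_0-n\overline g(x,\nu)$ and arrives at $\int_\Sigma n(|h|^2-nH_1^2)\overline g(x,\nu)\,dA=0$; note $|h|^2-nH_1^2=n(n-1)(H_1^2-H_2)$). However, as written the proof is incomplete exactly where you say it is: all the content sits in the boundary bookkeeping for $B_1-HB_0$, which you defer. Two remarks close the gap. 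First, you have misstated the form of the identities from Section 3: Proposition \ref{propP} carries \emph{no} residual boundary term. The boundary integral produced by $\divv_\Sigma(T_{k-1}x^T)$ is cancelled against the one produced by the auxiliary tangential field $U_{n+1}$, at the price of an interior correction, so \eqref{MinkowskiforP} reads
\begin{equation*}
\int_\Sigma\left[H_{k-1}\left(V_0-\cos\theta\,\overline g(Y_{n+1},\nu)\right)-H_k\,\overline g(x,\nu)\right]dA=0 .
\end{equation*}
Since the correction $-\cos\theta\,\overline g(Y_{n+1},\nu)$ enters multiplied by $H_{k-1}$ in both the $k=1$ and $k=2$ instances, your elimination removes it together with $V_0$: multiplying the $k=1$ identity by the constant $H$ and subtracting from the $k=2$ identity yields $\int_\Sigma(H_1^2-H_2)\,\overline g(x,\nu)\,dA=0$ with nothing left over, and the conclusion follows from $H_1^2-H_2\ge0$ (equality iff umbilical) and star-shapedness. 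In this packaging your "main obstacle" simply does not arise, and the argument is valid for every contact angle $\theta\in(0,\pi)$.

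Second, if you insist on the raw boundary terms, then $B_1-HB_0$ is not merely $\le 0$ but $=0$, and establishing this is precisely the step the paper's proof carries out; it is not a sign-chasing exercise. With the normalizations of \eqref{diverx} one finds $B_1-HB_0=\tfrac{\cos\theta}{n}\int_{\partial\Sigma}\left(\sin\theta\, H_1^{\partial\Sigma}-H_1\right)\overline g(x,\overline\nu)\,ds$, using $S_{1;\mu}=nH_1-h(\mu,\mu)$ and the relation $h(e,e)=\sin\theta\, h^{\partial\Sigma}(e,e)$ for $e\in T(\partial\Sigma)$, which follows from \eqref{angle} and the total geodesy of $P$. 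Its vanishing then requires two further inputs that your proposal does not mention: the classical Minkowski formula for the \emph{closed} hypersurface $\partial\Sigma$ inside $P\cong\mathbb H^{n}$ (for which $x$ restricted to $P$ is the relevant conformal field), and the identity \eqref{v0onparSig}, i.e. $\sin\theta\int_{\partial\Sigma}V_0\,ds=\int_\Sigma nH_1\overline g(Y_{n+1},\nu)\,dA$, combined with the $k=1$ case of \eqref{diverU}. Without these ingredients the claim $B_1-HB_0\le0$ is unsubstantiated; with them your argument coincides with the paper's.
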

	Since $x$ is embedding immediately if it is star-shaped, Theorem \ref{alex} implies Theorem \ref{starshaped} when $\theta\in(0,\frac{\pi}{2}]$.
	
	The remainder of this paper is organized as follows. In Section 2, we collect basic facts on the hyperbolic space. In Section 3, we prove  Minkowski type formulae. In Section 4, we give proofs of Theorem \ref{HKineq} and Theorem \ref{alex}. In Section 5, we give proofs of Theorem \ref{hkequi} and Theorem \ref{alexint}. In Section 6, we consider case of supporting hypersurfaces being geodesic spheres. Finally, in Section 7, we prove two other rigidity results for the capillary hypersurfaces supported on a totally geodesic plane in the hyperbolic space. 
	\begin{acknowledgement}\normalfont
		This work is supported by the National Research Foundation of Korea (grant NRF No.2021R1A4A1032418). We would also like to thank Prof. Haizhong Li from Tsinghua University, Prof. Yingxiang Hu from Beihang University and Prof. Chao Xia from Xiamen University for their useful comments and constant supports.
	\end{acknowledgement}
	
	\section{Preliminaries}
	\subsection{Capillary hypersurfaces and properties of the higher-order mean curvature $H_r$}
	\hfill\\
	
	Let $\overline g$ and  $\overline\nabla$ be the hyperbolic metric and the Levi-Civita connection of $\mathbb H^{n+1}$, respectively.
	Let $X:\Sigma\rightarrow\mathbb\overline B$ be an immersion of an $n$-dimensional Riemannian manifold $\Sigma$ with boundary $\partial\Sigma$ into an $(n+1)$-dimensional Riemannian manifold $B$ with boundary $\partial B$. $\Sigma$ is called a capillary hypersurface in $B$ if the immersion satisfies the following
	\begin{align*}
		X(\mbox{int}(\Sigma))\subset\mbox{int}(B),\quad X(\partial\Sigma)\subset\partial B,
	\end{align*}
	and $\Sigma$ and $\partial B$ intersects with a constant contact angle $\theta\in(0,\pi)$ along $\partial\Sigma$. On $\partial\Sigma$, there exists four unit normal vector fields, $\{\nu,\,\mu,\,\overline N,\,\overline\nu\}$,  on a 2-dimensional subspace of $T_p\mathbb H^{n+1}$ as follows:
	\begin{enumerate}[(i)]
		\item $\nu$ is the outward unit normal vector field of isometric immersion $X:\Sigma\rightarrow B$.
		\item $\mu$ is the outward unit normal vector field of isometric immersion $y:\partial\Sigma\rightarrow\Sigma$.
		\item $\overline N$ is the outward unit normal vector field of isometric immersion $z:\partial B\rightarrow B$.
		\item $\overline\nu$ is the outward unit normal vector field of isometric immersion $w:\partial\Sigma\rightarrow\partial B$.
	\end{enumerate}
	
	The second fundamental form of $x:\Sigma\rightarrow\mathbb H^{n+1}$ denoted by $h$ is defined as follows:
	\begin{align*}
		h(Y,Z)=\overline g(\overline\nabla_Y\nu,Z), \mbox{ for }Y,\;Z\in T(\Sigma),
	\end{align*}
	and the second fundamental form of $w:\partial\Sigma\rightarrow \partial B$ denoted by $h^{\partial\Sigma}$ is defined as follows:
	\begin{align*}
		h^{\partial\Sigma}(Y,Z)=\overline g(\overline\nabla_Y\overline\nu,Z), \mbox{ for }Y,\;Z\in T(\partial\Sigma).
	\end{align*} 
	
	\begin{figure}[ht]
		\centering

\tikzset{every picture/.style={line width=0.75pt}} 

\begin{tikzpicture}[x=0.75pt,y=0.75pt,yscale=-1,xscale=1]
	
	\draw [color={rgb, 255:red, 208; green, 2; blue, 27 }  ,draw opacity=1 ]   (253.46,86.42) .. controls (259.74,150.63) and (361.13,222.76) .. (404.51,203.71) ;
	\draw [color={rgb, 255:red, 126; green, 211; blue, 33 }  ,draw opacity=1 ]   (263.14,115.49) .. controls (318.2,120.32) and (347.12,137.31) .. (377.42,205.68) ;
	\draw    (263.14,115.49) -- (239.44,114.09) ;
	\draw [shift={(237.44,113.97)}, rotate = 3.38] [color={rgb, 255:red, 0; green, 0; blue, 0 }  ][line width=0.75]    (10.93,-3.29) .. controls (6.95,-1.4) and (3.31,-0.3) .. (0,0) .. controls (3.31,0.3) and (6.95,1.4) .. (10.93,3.29)   ;
	\draw    (263.14,115.49) -- (244.08,124.52) ;
	\draw [shift={(242.28,125.38)}, rotate = 334.64] [color={rgb, 255:red, 0; green, 0; blue, 0 }  ][line width=0.75]    (10.93,-3.29) .. controls (6.95,-1.4) and (3.31,-0.3) .. (0,0) .. controls (3.31,0.3) and (6.95,1.4) .. (10.93,3.29)   ;
	\draw    (263.14,115.49) -- (264.73,91.24) ;
	\draw [shift={(264.86,89.25)}, rotate = 93.75] [color={rgb, 255:red, 0; green, 0; blue, 0 }  ][line width=0.75]    (10.93,-3.29) .. controls (6.95,-1.4) and (3.31,-0.3) .. (0,0) .. controls (3.31,0.3) and (6.95,1.4) .. (10.93,3.29)   ;
	\draw    (263.14,115.49) -- (248.06,94.23) ;
	\draw [shift={(246.9,92.6)}, rotate = 54.64] [color={rgb, 255:red, 0; green, 0; blue, 0 }  ][line width=0.75]    (10.93,-3.29) .. controls (6.95,-1.4) and (3.31,-0.3) .. (0,0) .. controls (3.31,0.3) and (6.95,1.4) .. (10.93,3.29)   ;
	\draw  [dash pattern={on 4.5pt off 4.5pt}]  (263.14,115.49) -- (258.73,161.5) ;
	\draw  [draw opacity=0] (262.65,126.87) .. controls (262.03,127.43) and (261.3,127.83) .. (260.49,128) .. controls (257.83,128.57) and (255.37,126.44) .. (255,123.25) .. controls (254.78,121.4) and (255.32,119.6) .. (256.32,118.3) -- (259.82,122.22) -- cycle ; \draw   (262.65,126.87) .. controls (262.03,127.43) and (261.3,127.83) .. (260.49,128) .. controls (257.83,128.57) and (255.37,126.44) .. (255,123.25) .. controls (254.78,121.4) and (255.32,119.6) .. (256.32,118.3) ;  
	
	\draw (256.83,73.48) node [anchor=north west][inner sep=0.75pt]  [font=\footnotesize,rotate=-357.76] [align=left] { $\displaystyle \nu $ \ \ \ \ \ };
	\draw (228.46,97.01) node [anchor=north west][inner sep=0.75pt]  [font=\footnotesize,rotate=-358.36]  {$\mu $};
	\draw (224.21,121.88) node [anchor=north west][inner sep=0.75pt]  [font=\footnotesize,rotate=-359.08] [align=left] {$\displaystyle \overline{N}$};
	\draw (236.44,78) node [anchor=north west][inner sep=0.75pt]  [font=\footnotesize,rotate=-355.83] [align=left] {$\displaystyle \overline{\nu }$};
	\draw (316.97,142.93) node [anchor=north west][inner sep=0.75pt]  [font=\footnotesize,color={rgb, 255:red, 126; green, 211; blue, 33 }  ,opacity=1 ,rotate=-356.05] [align=left] {$\displaystyle \Sigma $};
	\draw (401.99,213.3) node [anchor=north west][inner sep=0.75pt]  [font=\footnotesize,color={rgb, 255:red, 208; green, 2; blue, 27 }  ,opacity=1 ,rotate=-359.59] [align=left] {$\displaystyle \partial B$};
	\draw (382.97,163.24) node [anchor=north west][inner sep=0.75pt]  [font=\footnotesize,color={rgb, 255:red, 208; green, 2; blue, 27 }  ,opacity=1 ,rotate=-2.02] [align=left] {$\displaystyle B$};
	\draw (243.87,127.95) node [anchor=north west][inner sep=0.75pt]  [font=\tiny,rotate=-357.06] [align=left] {$\displaystyle \theta $};

\end{tikzpicture}
		\caption{Capillary hypersurface supported on $\partial B$}
		\label{fig:capfig}
	\end{figure}
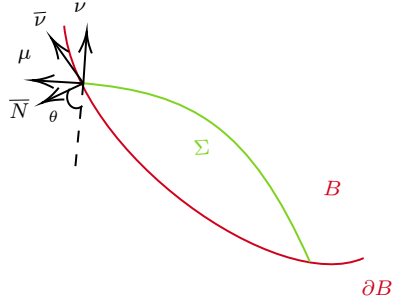

	Now we have the following relation of these vector fields,
	\begin{align}\label{angle}
		\left\{\begin{array}l
			\mu=\sin\theta\overline N+\cos\theta\overline\nu,\\
			\nu=-\cos\theta\overline N+\sin\theta\overline\nu,
		\end{array}\right.
	\end{align}
	where $\theta$ denotes the angle between $\overline N$ and $-\nu$. The following lemma is well-known and fundamental.
	\begin{lem}
		Let $X:\Sigma\rightarrow B\subset\mathbb H^{n+1}$ be an isometric immersion of a capillary hypersurface supported on the totally umbilical hypersurface $\partial B$. Then $\mu$ is a principal direction of $\Sigma$, that is, 
		\begin{align}\label{princmu}
			\overline\nabla_\mu\nu=h(\mu,\mu)\mu.
		\end{align}
	\end{lem}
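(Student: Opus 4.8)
The plan is to work pointwise along $\partial\Sigma$ and exploit the two orthonormal frames for the normal $2$-plane provided by the angle relations \eqref{angle}. Fix $p\in\partial\Sigma$ and choose an orthonormal basis $\{e_1,\dots,e_{n-1}\}$ of $T_p(\partial\Sigma)$. Then $\{e_1,\dots,e_{n-1},\mu\}$ is an orthonormal basis of $T_p\Sigma$, while $\{e_1,\dots,e_{n-1},\overline\nu\}$ is one of $T_p(\partial B)$; the two remaining orthonormal pairs $\{\mu,\nu\}$ and $\{\overline\nu,\overline N\}$ both span the $2$-plane orthogonal to $T_p(\partial\Sigma)$ and are related by the rotation \eqref{angle}. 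Since $\nu$ is a unit normal field of $\Sigma$, the vector $\overline\nabla_\mu\nu$ is tangent to $\Sigma$; hence to prove \eqref{princmu} it suffices to show that its components in the directions $e_i$ vanish, i.e.\ that $h(\mu,e_i)=0$ for every $i$ (the remaining component along $\mu$ is then automatically $h(\mu,\mu)$).

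First I would compute $h(\mu,e_i)$. By symmetry of the second fundamental form, $h(\mu,e_i)=\overline g(\overline\nabla_{e_i}\nu,\mu)$. Substituting $\nu=-\cos\theta\,\overline N+\sin\theta\,\overline\nu$ from \eqref{angle} and using that $\theta$ is constant along $\partial\Sigma$ gives $\overline\nabla_{e_i}\nu=-\cos\theta\,\overline\nabla_{e_i}\overline N+\sin\theta\,\overline\nabla_{e_i}\overline\nu$. Pairing this against $\mu=\sin\theta\,\overline N+\cos\theta\,\overline\nu$ and expanding the inner product, the four resulting terms simplify using the orthonormality relations $\overline g(\overline\nabla_{e_i}\overline N,\overline N)=0$, $\overline g(\overline\nabla_{e_i}\overline\nu,\overline\nu)=0$, and (from differentiating $\overline g(\overline N,\overline\nu)=0$) $\overline g(\overline\nabla_{e_i}\overline\nu,\overline N)=-\overline g(\overline\nabla_{e_i}\overline N,\overline\nu)$. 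After collecting the $\cos^2\theta$ and $\sin^2\theta$ contributions, everything reduces to the single term $h(\mu,e_i)=-\overline g(\overline\nabla_{e_i}\overline N,\overline\nu)$, which is precisely the second fundamental form of $\partial B$ in $\mathbb H^{n+1}$ evaluated on the pair $(e_i,\overline\nu)$.

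Finally I would invoke the hypothesis that $\partial B$ is totally umbilical. This means its second fundamental form is a scalar multiple of the induced metric, so $\overline g(\overline\nabla_{e_i}\overline N,\overline\nu)=\lambda\,\overline g(e_i,\overline\nu)$ for the principal curvature $\lambda$. Since $e_i\in T_p(\partial\Sigma)$ while $\overline\nu$ is the unit conormal of $\partial B$ orthogonal to $T_p(\partial\Sigma)$, we have $\overline g(e_i,\overline\nu)=0$, whence $h(\mu,e_i)=0$ for all $i$. This yields $\overline\nabla_\mu\nu=h(\mu,\mu)\mu$, as claimed. The only real subtlety lies in the bookkeeping of the second step: one must differentiate the angle relations in a \emph{tangential} direction $e_i$, so that the constancy of $\theta$ removes any $e_i(\theta)$ terms, and then use the three orthonormality identities to collapse the expansion. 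The total umbilicity of $\partial B$ is exactly the ingredient that forces the off-diagonal block to vanish; without it $\mu$ would in general fail to be a principal direction.
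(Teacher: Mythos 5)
Your proposal is correct and follows essentially the same route as the paper: decompose $\overline\nabla_\mu\nu$ in the frame $\{e_i,\mu\}$, use the symmetry of $h$ and the angle relations \eqref{angle} to reduce $h(\mu,e_i)$ to the second fundamental form of $\partial B$ evaluated on $(e_i,\overline\nu)$, and kill it by total umbilicity since $e_i\perp\overline\nu$. Your version is in fact slightly more explicit than the paper's about the final step (writing the umbilicity as proportionality to the metric and using $\overline g(e_i,\overline\nu)=0$), but the argument is the same.
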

	\begin{proof}
		The proof can be found in \cite[Proposition 2.1]{WangXia2019Stablecapillary}, we give a proof for completeness. Without loss of generality, let $\{e_\alpha\}$ be the orthonormal basis along $\partial\Sigma$ and $\alpha=1,2,\dots,n-1$. Then, we have the following
		\begin{equation*}
			\begin{aligned}
				\overline\nabla_\mu\nu=&\overline g(\overline\nabla_\mu\nu,\mu)\mu+\sum_{\alpha=1}^{n-1}\overline g(\overline\nabla_\mu\nu,e_\alpha)e_\alpha\\
				=&h(\mu,\mu)\mu-\sum_{\alpha=1}^{n-1}\overline g(\overline\nabla_{e_\alpha}(-\cos\theta\overline N+\sin\theta\overline\nu),\sin\theta\overline N+\cos\theta\overline\nu)e_\alpha\\
				=&h(\mu,\mu)\mu-h^{\partial\Sigma}(e_\alpha,\overline\nu)e_\alpha\\
				=&h(\mu,\mu)\mu,
			\end{aligned}
		\end{equation*}
		where the last equality holds because $\partial B$ is totally umbilical.
	\end{proof}
	Let $h_{ij}=h(e_i,e_j)$ and $\lambda=(\lambda_1,\lambda_2,\dots,\lambda_n)$ be the eigenvalues of the matrix $(h_{ij})_{i,j=1}^n$, that is, the principal curvatures of $\Sigma$. Let $k\in\{1,2,\dots,n\}$, the $k$-th mean curvature is defined by 
	\begin{align*}
		S_k=\frac{1}{k!}\sum\limits_{1\leq i_1<\cdots<i_k\leq n}\lambda_1\lambda_2\dots\lambda_{i_k},
	\end{align*} 
	and the normalized $k$-th mean curvature is defined by $H_k=\binom{n}{k}^{-1}S_k$. 
	The associated Newton transformation is defined by induction,
	\begin{align*}
		T_0=I,
	\end{align*}
	\begin{align*}
		T_k=S_kI-T_{k-1}\circ h.
	\end{align*}
	Using the induction formulae, we have the following properties.
	\begin{align*}
		\mbox{tr}(T_k)=(n-k)S_k=(n-k)\binom{n}{k}H_k,
	\end{align*}
	\begin{align*}
		\mbox{tr}(T_k\circ h)=\sum_{i,j=1}^n(T_k)_j^ih_i^j=(k+1)S_{k+1}=(n-k)\binom{n}{k}H_{k+1}.
	\end{align*}
	
	Let $\Gamma^+_k=\{\lambda\in\mathbb R^n : H_i>0,i=1,2,\dots,k\}$. Then, for $1\leq l<k\leq n$ and   $\lambda\in\Gamma^+_l$, we have the following classic Newton-Maclaurin inequality,
	\begin{align}\label{NMineq}
		\frac{H_l}{H_{l-1}}\geq \frac{H_k}{H_{k-1}}.
	\end{align}
	The equality in \eqref{NMineq} holds if and only if $\lambda=c\mathcal I$, where $c>0$ is a constant and $\mathcal I=(1,1,\dots,1)\in\mathbb R^n$.
	\subsection{Properties in the Poincar\'{e} ball model}
	\hfill\\
	\par Let $x$ be the position vector in $\mathbb B^{n+1}$. The following fact is well-known:
	\begin{align}\label{confofx}
		\overline\nabla x=V_0\overline g,
	\end{align}
	where $V_0=\frac{1+|x|^2}{1-|x|^2}$ is a smooth function in $\mathbb H^{n+1}$ satisfying 
	\begin{equation*}
		\overline\nabla^2 V_0=V_0\overline g.
		\end{equation*}
	\par A vector field $X$ is called a conformal Killing vector field in $(\overline M,\overline g)$, if the Lie derivative $\mathcal L$ satisfies
	\begin{align*}
		\mathcal L_X\overline g=f\overline g.
	\end{align*}
    In particular, $X$ is call a Killing vector if $f=0$. In this case, the key Killing vector field $Y_a$ in Poincar\'{e} ball model we use is
	\begin{align*}
		Y_a=\frac{1}{2}(1+|x|^2)a-\delta(x,a)x,
	\end{align*}
	where $a$ is an arbitrary vector field in $\mathbb H^{n+1}$ which is constant with respect to $(\mathbb B^{n+1},\delta)$. It is given by Wang and Xia in \cite{WangXia2019Stablecapillary}.
	
	Let $E_1,E_2,\dots,E_{n+1}$ form the coordinate frame in the conformal Euclidean metric, and the corresponding normalized vectors $\overline E_1,\overline E_2,\dots,\overline E_{n+1}$ form an orthonormal frame on $(\mathbb B^{n+1},\overline g)$. For $A, B\in\{1,2,\dots,n+1\}$, it holds that
	\begin{align}\label{confofY}
		\frac{1}{2}\left(\overline g(\overline\nabla_{\overline E_A}Y_a,\overline E_B)+\overline g(\overline\nabla_{\overline E_B}Y_a,\overline E_A)\right)=0.
	\end{align}
	This can be verified by the following properties.
	\begin{prop}[See \cite{WangXia2019Stablecapillary}]\label{deriP}
	Let $a=\sum\limits_{i=1}^{n+1}a_iE_i$ be an arbitrary vector field satisfying that $a_i$ are constant and $\delta(a,E_{n+1})=0$. Denote the normalized vector of $a$ and $E_{n+1}$ in $(\mathbb B^{n+1}_+,\frac{4}{(1-|x|^2)^2}\delta)$ by $\bar a$ and $\bar E_{n+1}$ respectively, that is, $\bar a=\frac{1-|x|^2}{2}a$ and $\bar E_{n+1}=\frac{1-|x|^2}{2}E_{n+1}$. Then, for any $Z\in T\mathbb H^{n+1}$, we have the following:
	{\normalfont \begin{enumerate}[(a)]
		\item $\overline\nabla_Za=\overline g(Z,x)\bar a+\overline g(\bar a,x)Z-\overline g(Z,\bar a)x$,
		\item $\overline\nabla_Z(\frac{1-|x|^2}{2}a)=\frac{1-|x|^2}{2}\left[\overline g(x,\bar a)Z-\overline g(Z,\bar a)x\right]$,
		\item $\overline\nabla_ZV_0=\overline g(x,Z)$,
		\item $\overline\nabla_ZY_a=\overline g(x,Z)\bar a-\overline g(Z,\bar a)x$.
	\end{enumerate}}
	\end{prop}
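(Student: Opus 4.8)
The plan is to derive all four identities from the single structural fact that $\overline g = e^{2\varphi}\delta$ is conformal to the Euclidean metric $\delta$ on $\mathbb B^{n+1}$, with $e^{2\varphi} = \frac{4}{(1-|x|^2)^2}$, i.e. $\varphi = \log 2 - \log(1-|x|^2)$. First I would invoke the transformation law for the Levi-Civita connection under a conformal change: writing $\nabla$ for the flat Euclidean connection,
\[
\overline\nabla_X Y = \nabla_X Y + (X\varphi)Y + (Y\varphi)X - \delta(X,Y)\,\mathrm{grad}_\delta\varphi .
\]
A direct computation gives $\mathrm{grad}_\delta\varphi = \frac{2x}{1-|x|^2}$ and $X\varphi = \frac{2\delta(X,x)}{1-|x|^2}$. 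Throughout I would use the dictionary $\delta(\cdot,\cdot) = \frac{(1-|x|^2)^2}{4}\overline g(\cdot,\cdot)$ together with $a = \frac{2}{1-|x|^2}\bar a$ to translate between the two metrics and between $a$ and its rescaling $\bar a$.

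For part (a), since the coefficients $a_i$ are constant the field $a$ is parallel for the flat connection, so $\nabla_Z a = 0$ and the transformation law collapses to three terms; converting each via the dictionary cancels all powers of $1-|x|^2$ and yields exactly $\overline g(Z,x)\bar a + \overline g(\bar a,x)Z - \overline g(Z,\bar a)x$. This is the computational heart, and the remaining identities reduce to it. Part (c) is independent and quickest: as $V_0$ is scalar, $\overline\nabla_Z V_0 = Z(V_0)$, and differentiating $V_0 = \frac{1+|x|^2}{1-|x|^2}$ gives $dV_0 = \frac{4}{(1-|x|^2)^2}\sum_i x_i\,dx_i$, which evaluated on $Z$ is precisely $\overline g(x,Z)$. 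Equivalently (c) says the $\overline g$-gradient of $V_0$ is the position field $x$, consistent with \eqref{confofx}, since $\overline\nabla^2 V_0 = V_0\overline g$ then forces $\overline\nabla_Z x = V_0 Z$.

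For (b) and (d) I would combine (a) with the Leibniz rule and the elementary scalar derivatives $Z\!\left(\tfrac{1-|x|^2}{2}\right) = -\delta(Z,x)$, $Z\!\left(\tfrac{1+|x|^2}{2}\right) = \delta(Z,x)$, $Z(\delta(x,a)) = \delta(Z,a)$, together with $\overline\nabla_Z x = V_0 Z$ (which itself drops out of the transformation law, the two $x$-terms cancelling). In (b), writing $\overline\nabla_Z\bar a = -\delta(Z,x)\,a + \frac{1-|x|^2}{2}\overline\nabla_Z a$ and substituting (a), the $\bar a$-terms cancel against $-\delta(Z,x)a$ after conversion, leaving $\frac{1-|x|^2}{2}\left[\overline g(x,\bar a)Z - \overline g(Z,\bar a)x\right]$. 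In (d), expanding $Y_a = \frac{1}{2}(1+|x|^2)a - \delta(x,a)x$ produces four terms; after conversion the coefficients of $\bar a$ sum to $\frac{1-|x|^2}{2}+\frac{1+|x|^2}{2}=1$, the two coefficients of $Z$ cancel, and the two coefficients of $x$ sum to $-1$, giving $\overline g(x,Z)\bar a - \overline g(Z,\bar a)x$.

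The argument has no conceptual obstacle; the only point demanding care is the consistent bookkeeping of conformal factors when passing between $\delta$ and $\overline g$ and between $a$ and $\bar a$, because it is exactly the interplay of the two weights $\frac{1\pm|x|^2}{2}$ that generates the clean cancellations in (b) and (d). I would perform the conversions symbolically once, in the proof of (a), and reuse them verbatim so as not to repeat the same conformal algebra.
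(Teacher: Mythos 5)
Your proposal is correct, and every step checks out: with $\varphi=\log 2-\log(1-|x|^2)$ one indeed has $\mathrm{grad}_\delta\varphi=\frac{2x}{1-|x|^2}$, the flat-parallelism of $a$ collapses the conformal transformation law to the three terms of (a), the two $x$-terms cancel in the transformation law applied to the position field to give $\overline\nabla_Z x=V_0Z$, and in (d) the $\bar a$-coefficients sum to $\frac{1-|x|^2}{2}+\frac{1+|x|^2}{2}=1$, the $Z$-coefficients cancel via $\delta(x,a)V_0=\frac{1+|x|^2}{2}\overline g(x,\bar a)$, and the $x$-coefficients sum to $-1$, exactly as you claim. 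Note that the paper itself gives no proof of this proposition — it is stated with a citation to Wang--Xia — so there is no in-paper argument to compare against; your conformal-change computation is the standard derivation behind the cited result and supplies precisely the omitted details.
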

	\subsection{Properties in the Poincar\'{e} half space model}
	\hfill\\
	\par In Poincar\'{e} half space model $\mathbb H^{n+1}=(\mathbb R^{n+1}_+,\frac{1}{x_{n+1}^2}\delta)$, we denote $\widetilde x$ the position vector field in $\mathbb R^{n+1}_+$.
	
	Let $E_1,E_2,\dots,E_{n+1}$ form the coordinate frame in the Euclidean metric, and the normalized vectors $\overline E_1,\overline E_2,\dots,\overline E_{n+1}$ form an orthonormal frame on $(\mathbb R^{n+1}_+,\overline g)$.  A conformal Killing vector field given by $$X_{n+1}:=\widetilde x-E_{n+1}$$ plays an important role in \cite{guo2022partially} and \cite{guo2022stable}. For $A, B\in\{1,2,\dots,n+1\}$, it holds that
	\begin{align}\label{confXn+1}
		\frac{1}{2}\left(\overline g(\overline\nabla_{\overline E_A}X_{n+1},\overline E_B)+\overline g(\overline\nabla_{\overline E_B}X_{n+1},\overline E_A)\right)=V_{n+1}\overline g,
	\end{align}
	where $V_{n+1}=\frac{1}{x_{n+1}}$ is a smooth function satisfying 
	\begin{equation*}
		\overline\nabla^2 V_{n+1}=V_{n+1}\overline g.
	\end{equation*}
	Similarly, in the Poincar\'{e} half space model, we have the following proposition, the proof of which is omitted.	
	\begin{prop}[See \cite{guo2022stable}]\label{hsderi}
		Let $a=\sum\limits_{i=1}^{n+1}a_iE_i$ be an arbitrary vector field satisfying that $a_i$ are constant and $\delta(a,E_{n+1})=0$. Denote the normalized vector of $a$ and $E_{n+1}$ in $(\mathbb R^{n+1}_+,\frac{1}{x_{n+1}^2}\delta)$ by $\bar a$ and $\bar E_{n+1}$ respectively, that is, $\bar a=x_{n+1}a$ and $\bar E_{n+1}=x_{n+1}E_{n+1}$. Then, for any $Y\in T\mathbb H^{n+1}$, we have the following:
		{\normalfont
		\begin{enumerate}[(a)]
			\item $\overline\nabla_Y\widetilde x=-\overline g(Y,\overline E_{n+1})\widetilde x+\overline g(Y,\widetilde x)\overline E_{n+1}$,
			\item $\overline\nabla_Ya=-\overline g(Y,\overline E_{n+1})\overline a+\overline g(Y,\overline a)E_{n+1}$,
			\item $\overline\nabla_YE_{n+1}=-\frac{1}{x_{n+1}}Y$,
			\item $\overline\nabla_Y\overline E_{n+1}=\overline g(\overline E_{n+1},Y)\overline E_{n+1}-Y$.
		\end{enumerate}}
	\end{prop}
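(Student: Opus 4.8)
The plan is to exploit the fact that the Poincar\'e half space model is globally conformal to the flat Euclidean structure, so that every hyperbolic covariant derivative can be reduced to a flat computation followed by bookkeeping. Writing $\overline g=e^{2\varphi}\delta$ with $\varphi=-\log x_{n+1}$, and letting $D$ denote the flat Euclidean connection, I would invoke the standard transformation law for Levi-Civita connections under a conformal change of metric: for arbitrary vector fields $Y,W$,
\begin{align*}
	\overline\nabla_Y W = D_Y W + (Y\varphi)\,W + (W\varphi)\,Y - \delta(Y,W)\,\mrm{grad}_\delta\varphi ,
\end{align*}
where $\mrm{grad}_\delta\varphi$ is the Euclidean gradient. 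Since $\varphi=-\log x_{n+1}$ one has $\mrm{grad}_\delta\varphi=-\frac{1}{x_{n+1}}E_{n+1}$ and $Y\varphi=-\frac{1}{x_{n+1}}\delta(Y,E_{n+1})$, so the master formula specializes to
\begin{align*}
	\overline\nabla_Y W = D_Y W - \frac{1}{x_{n+1}}\delta(Y,E_{n+1})\,W - \frac{1}{x_{n+1}}\delta(W,E_{n+1})\,Y + \frac{1}{x_{n+1}}\delta(Y,W)\,E_{n+1}.
\end{align*}

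With this single identity in hand, each of (a)--(d) follows by substituting the appropriate field for $W$ and discarding the terms that vanish. For $W=E_{n+1}$ the field is Euclidean-constant, so $D_YE_{n+1}=0$ and the remaining three terms collapse to $-\frac{1}{x_{n+1}}Y$, which is (c). For $W=\widetilde x$ the flat derivative of the position field is $D_Y\widetilde x=Y$, and using $\delta(\widetilde x,E_{n+1})=x_{n+1}$ the term $(\widetilde x\varphi)Y=-Y$ cancels $D_Y\widetilde x$, leaving the raw expression $-\frac{1}{x_{n+1}}\delta(Y,E_{n+1})\widetilde x+\frac{1}{x_{n+1}}\delta(Y,\widetilde x)E_{n+1}$. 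For $W=a$ the hypotheses $D_Ya=0$ and $\delta(a,E_{n+1})=0$ kill both the flat term and $(a\varphi)Y$, leaving $-\frac{1}{x_{n+1}}\delta(Y,E_{n+1})a+\frac{1}{x_{n+1}}\delta(Y,a)E_{n+1}$. Finally I would obtain (d) not from the master formula directly but from (c) via the Leibniz rule applied to $\overline E_{n+1}=x_{n+1}E_{n+1}$, using $Y(x_{n+1})=\delta(Y,E_{n+1})$:
\begin{align*}
	\overline\nabla_Y\overline E_{n+1} = \big(Y x_{n+1}\big)E_{n+1} + x_{n+1}\,\overline\nabla_Y E_{n+1} = \delta(Y,E_{n+1})E_{n+1} - Y.
\end{align*}

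The only genuinely delicate step -- and the one I would carry out most carefully -- is re-expressing the raw Euclidean pairings above in terms of the hyperbolic metric $\overline g$ and the normalized frame vectors appearing in the statement. The relevant dictionary is $\overline g=\frac{1}{x_{n+1}^2}\delta$ together with $\overline E_{n+1}=x_{n+1}E_{n+1}$ and $\overline a=x_{n+1}a$, which give $\overline g(Y,\overline E_{n+1})=\frac{1}{x_{n+1}}\delta(Y,E_{n+1})$ and $\overline g(Y,\overline a)=\frac{1}{x_{n+1}}\delta(Y,a)$, as well as $E_{n+1}=\frac{1}{x_{n+1}}\overline E_{n+1}$. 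Substituting these into the raw expressions converts each into its stated form; for instance the term $\frac{1}{x_{n+1}}\delta(Y,\widetilde x)E_{n+1}$ becomes $\overline g(Y,\widetilde x)\overline E_{n+1}$, yielding (a), and the term $\delta(Y,E_{n+1})E_{n+1}$ in the last display becomes $\overline g(\overline E_{n+1},Y)\overline E_{n+1}$, yielding (d); the identity (b) follows from its raw expression by the same substitution. There is no conceptual obstacle here: the entire proposition is a bookkeeping exercise in conformal geometry, and all the effort lies in faithfully tracking the powers of $x_{n+1}$ that mediate between the flat and hyperbolic structures and between unnormalized and normalized frames.
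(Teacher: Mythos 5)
Your overall strategy is sound, and in fact it is the only computation available to compare against: the paper explicitly omits the proof of this proposition, deferring to \cite{guo2022stable}. Your master formula for the conformal change $\overline g=e^{2\varphi}\delta$ with $\varphi=-\log x_{n+1}$ is correct, the specialization is correct, and your derivations of (a), (c) and (d) — including the Leibniz-rule route to (d) via (c) — convert to the stated forms with all powers of $x_{n+1}$ accounted for; I verified these against the half-plane Christoffel symbols.

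There is, however, a genuine defect in your treatment of (b): the claim that it ``follows from its raw expression by the same substitution'' is false, and it cannot be repaired because the printed statement (b) is itself off by a factor of $x_{n+1}$ in its first term. Your raw expression $-\frac{1}{x_{n+1}}\delta(Y,E_{n+1})\,a+\frac{1}{x_{n+1}}\delta(Y,a)\,E_{n+1}$ is the correct value of $\overline\nabla_Y a$, but since $\overline g(Y,\overline E_{n+1})=\frac{1}{x_{n+1}}\delta(Y,E_{n+1})$ and $\bar a=x_{n+1}a$, the printed first term evaluates to $-\overline g(Y,\overline E_{n+1})\,\bar a=-\delta(Y,E_{n+1})\,a$, which exceeds the true term by a factor of $x_{n+1}$. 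The correct conversions are
\begin{equation*}
	\overline\nabla_Y a=-\overline g(Y,\overline E_{n+1})\,a+\overline g(Y,\bar a)\,E_{n+1}
	=-\overline g(Y,E_{n+1})\,\bar a+\overline g(Y,\bar a)\,E_{n+1},
\end{equation*}
i.e.\ either the $\bar a$ in the first term should be the unnormalized $a$, or the $\overline E_{n+1}$ in the pairing should be the unnormalized $E_{n+1}$. A two-dimensional sanity check with $Y=\partial_y$, $a=\partial_x$ gives $\overline\nabla_{\partial_y}\partial_x=-\frac{1}{y}\partial_x$, matching your raw formula and contradicting the statement as printed. So your computation is right and exposes a typo in the proposition; the gap is that you asserted agreement where there is none, which a careful bookkeeping pass — the very step you identified as the delicate one — should have caught. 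Note that the corrected (b), combined with (a), still yields the antisymmetry needed for \eqref{killinga}, so the downstream use of this proposition in the paper is unaffected.
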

	It is easy to see that \eqref{confXn+1} can be obtained from (a) and (c) in Proposition \ref{hsderi}. Moreover, from (a) and (b) in Proposition \ref{hsderi}, we have the following:
	\begin{align}\label{killinga}
		\frac{1}{2}\left(\overline g(\overline\nabla_{\overline E_A}a,\overline E_B)+\overline g(\overline\nabla_{\overline E_B}a,\overline E_A)\right)=0,
	\end{align}
	and 
	\begin{align}\label{killingx}
		\frac{1}{2}\left(\overline g(\overline\nabla_{\overline E_A}\widetilde x,\overline E_B)+\overline g(\overline\nabla_{\overline E_B}\widetilde x,\overline E_A)\right)=0,
	\end{align}
	that is, $a$ and $\widetilde x$ are Killing vector fields in $(\mathbb R^{n+1}_+, \frac{1}{x_{n+1}^2}\delta)$.
	\section{Minkowski type formulae in the hyperbolic space}
	In \cite{WangXia2019Stablecapillary}, the authors give a Minkowski type formula for capillary hypersurface $M$ in a geodesic ball of radius $R$ in a space form $\mathbb M^{n+1}(K)$ ($K=0,\,-1,\,1$), 
	\begin{align*}
		\int_M \left(n(V_a+\mbox{sn}_K(R)\cos\theta\overline g(Y_a,\nu))-S_1\overline g(X_a,\nu)\right)dA=0,
	\end{align*}
	where
	\begin{align*}
		\mbox{sn}_K(r)=\left\{\begin{array}{ll}
			r,& \mbox{ when }K=0;\\
			\sin r, & \mbox{ when }K=1; \\
			\sinh r, & \mbox{ when }K=-1,\\
		\end{array}\right.
	\end{align*}
	and $X_a$ is a conformal Killing vector field tangent to the geodesic sphere of radius $R$, which is the supporting hypersurface of $M$. We introduce our results on the case (supported on geodesic sphere in $\mathbb H^{n+1}$) in the Section 6.
	\subsection{A Minkowski type formula for capillary hypersurfaces in Poincar\'{e} ball model}
	\hfill\\
	\par In $\Bbb B^{n+1}_{+}$, the outer unit normal vector field along $P$ is $\overline N=-\frac{1-|x|^2}{2}E_{n+1}$. We note that the position vector field $x$ in the Poincar\'{e} ball model is tangent to $P$, therefore $\overline g(x,\overline N)=0$.
	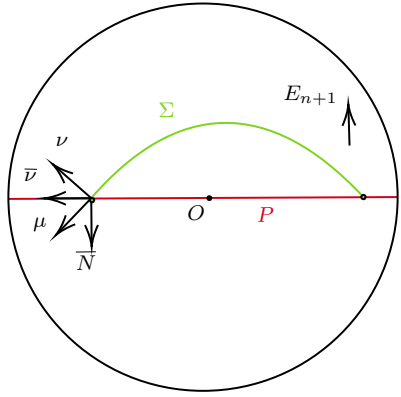
\begin{figure}[ht]
		
		\centering
		\tikzset{every picture/.style={line width=0.75pt}} 
		
		\begin{tikzpicture}[x=0.75pt,y=0.75pt,yscale=-1,xscale=1]
			
			\draw [color={rgb, 255:red, 208; green, 2; blue, 27 }  ,draw opacity=1 ]   (195.99,203.13) -- (392.3,202.2) ;
			\draw [color={rgb, 255:red, 126; green, 211; blue, 33 }  ,draw opacity=1 ]   (237.78,202.74) .. controls (280.29,153.81) and (327.07,150.61) .. (375.05,202.19) ;
			\draw [color={rgb, 255:red, 0; green, 0; blue, 0 }  ,draw opacity=1 ]   (237.78,202.74) -- (221.14,220.13) ;
			\draw [shift={(219.76,221.58)}, rotate = 313.73] [color={rgb, 255:red, 0; green, 0; blue, 0 }  ,draw opacity=1 ][line width=0.75]    (10.93,-3.29) .. controls (6.95,-1.4) and (3.31,-0.3) .. (0,0) .. controls (3.31,0.3) and (6.95,1.4) .. (10.93,3.29)   ;
			\draw [color={rgb, 255:red, 0; green, 0; blue, 0 }  ,draw opacity=1 ]   (368.05,176.19) -- (367.49,157.22) ;
			\draw [shift={(367.43,155.22)}, rotate = 88.31] [color={rgb, 255:red, 0; green, 0; blue, 0 }  ,draw opacity=1 ][line width=0.75]    (10.93,-3.29) .. controls (6.95,-1.4) and (3.31,-0.3) .. (0,0) .. controls (3.31,0.3) and (6.95,1.4) .. (10.93,3.29)   ;
			\draw [color={rgb, 255:red, 0; green, 0; blue, 0 }  ,draw opacity=1 ]   (237.78,202.74) -- (237.98,225.26) ;
			\draw [shift={(238,227.26)}, rotate = 269.5] [color={rgb, 255:red, 0; green, 0; blue, 0 }  ,draw opacity=1 ][line width=0.75]    (10.93,-3.29) .. controls (6.95,-1.4) and (3.31,-0.3) .. (0,0) .. controls (3.31,0.3) and (6.95,1.4) .. (10.93,3.29)   ;
			\draw [color={rgb, 255:red, 0; green, 0; blue, 0 }  ,draw opacity=1 ]   (237.78,202.74) -- (216.05,202.83) ;
			\draw [shift={(214.05,202.84)}, rotate = 359.75] [color={rgb, 255:red, 0; green, 0; blue, 0 }  ,draw opacity=1 ][line width=0.75]    (10.93,-3.29) .. controls (6.95,-1.4) and (3.31,-0.3) .. (0,0) .. controls (3.31,0.3) and (6.95,1.4) .. (10.93,3.29)   ;
			\draw [color={rgb, 255:red, 0; green, 0; blue, 0 }  ,draw opacity=1 ]   (237.78,202.74) -- (219.7,186.82) ;
			\draw [shift={(218.2,185.5)}, rotate = 41.35] [color={rgb, 255:red, 0; green, 0; blue, 0 }  ,draw opacity=1 ][line width=0.75]    (10.93,-3.29) .. controls (6.95,-1.4) and (3.31,-0.3) .. (0,0) .. controls (3.31,0.3) and (6.95,1.4) .. (10.93,3.29)   ;
			\draw  [color={rgb, 255:red, 0; green, 0; blue, 0 }  ,draw opacity=1 ][fill={rgb, 255:red, 126; green, 211; blue, 33 }  ,fill opacity=1 ] (376.07,201.81) .. controls (375.86,201.24) and (375.23,200.96) .. (374.67,201.17) .. controls (374.1,201.38) and (373.82,202.01) .. (374.03,202.57) .. controls (374.24,203.13) and (374.87,203.42) .. (375.44,203.2) .. controls (376,202.99) and (376.28,202.37) .. (376.07,201.81) -- cycle ;
			\draw  [color={rgb, 255:red, 0; green, 0; blue, 0 }  ,draw opacity=1 ][fill={rgb, 255:red, 126; green, 211; blue, 33 }  ,fill opacity=1 ] (239.19,203.37) .. controls (238.98,202.81) and (238.35,202.53) .. (237.78,202.74) .. controls (237.22,202.95) and (236.94,203.58) .. (237.15,204.14) .. controls (237.36,204.7) and (237.99,204.98) .. (238.55,204.77) .. controls (239.12,204.56) and (239.4,203.93) .. (239.19,203.37) -- cycle ;
			\draw  [color={rgb, 255:red, 0; green, 0; blue, 0 }  ,draw opacity=1 ][fill={rgb, 255:red, 0; green, 0; blue, 0 }  ,fill opacity=1 ] (298.24,202.4) .. controls (298.05,201.9) and (297.48,201.65) .. (296.98,201.84) .. controls (296.47,202.03) and (296.21,202.59) .. (296.4,203.09) .. controls (296.6,203.6) and (297.16,203.85) .. (297.67,203.66) .. controls (298.18,203.47) and (298.43,202.91) .. (298.24,202.4) -- cycle ;
			\draw  [color={rgb, 255:red, 0; green, 0; blue, 0 }  ,draw opacity=1 ] (195.99,202.25) .. controls (195.99,148.27) and (239.94,104.51) .. (294.15,104.51) .. controls (348.35,104.51) and (392.3,148.27) .. (392.3,202.25) .. controls (392.3,256.24) and (348.35,300) .. (294.15,300) .. controls (239.94,300) and (195.99,256.24) .. (195.99,202.25) -- cycle ;
			
			\draw (218.51,170.47) node [anchor=north west][inner sep=0.75pt]  [font=\footnotesize,rotate=-0.24] [align=left] {$\displaystyle \nu $};
			\draw (202.72,185.79) node [anchor=north west][inner sep=0.75pt]  [font=\footnotesize,rotate=-1.42] [align=left] {$\displaystyle \overline{\nu }$};
			\draw (228.46,227.76) node [anchor=north west][inner sep=0.75pt]  [font=\footnotesize,rotate=-359.16] [align=left] {$\displaystyle \overline{N}$};
			\draw (206.8,211.45) node [anchor=north west][inner sep=0.75pt]  [font=\footnotesize,rotate=-357.93] [align=left] {$\displaystyle \mu $};
			\draw (333.24,144.43) node [anchor=north west][inner sep=0.75pt]  [font=\footnotesize,color={rgb, 255:red, 0; green, 0; blue, 0 }  ,opacity=1 ,rotate=-0.33] [align=left] {$\displaystyle E_{n+1}$};
			\draw (270.17,153.19) node [anchor=north west][inner sep=0.75pt]  [font=\footnotesize,color={rgb, 255:red, 126; green, 211; blue, 33 }  ,opacity=1 ,rotate=-359.96] [align=left] {$\displaystyle \Sigma $};
			\draw (319.64,206.02) node [anchor=north west][inner sep=0.75pt]  [font=\footnotesize,color={rgb, 255:red, 208; green, 2; blue, 27 }  ,opacity=1 ,rotate=-357.63] [align=left] {$\displaystyle P$};
			\draw (284.81,204.98) node [anchor=north west][inner sep=0.75pt]  [font=\footnotesize,rotate=-359.36] [align=left] {$\displaystyle O$};

		\end{tikzpicture}
		\caption{Capillary hypersurface $\Sigma$ supported on a totally geodesic plane $P$.}
	\end{figure} 
	\par Let $Y_{n+1}=Y_{E_{n+1}}=\frac{1}{2}(1+|x|^2)E_{n+1}-\delta(x,E_{n+1})x$ and $E_{n+1}$ be the unit constant vector with respect to $\delta$. Then, we can derive a Minkowski type formula as follows.
	\begin{prop}\label{propP}
		Let $x:\Sigma\subset\mathbb B^{n+1}_+$ be a compact, immersed capillary hypersurface supported on $P$, intersecting $P$ at a constant angle $\theta$. For $k\in\{1,2,\dots,n\}$, we have 
		\begin{align}\label{MinkowskiforP}
			\int_\Sigma \left[H_{k-1}(V_0-\cos\theta\overline g(Y_{n+1},\nu))-H_k\overline g(x,\nu)\right]dA=0.
		\end{align}
	\end{prop}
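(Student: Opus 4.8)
The plan is to integrate over $\Sigma$ a divergence identity assembled from the Newton transformation $T_{k-1}$ and the conformal field $x$, and then to transport the resulting boundary term to the interior using the capillary geometry along $\partial\Sigma$. Since $\mathbb H^{n+1}$ is a space form, the Codazzi equation makes $T_{k-1}$ divergence free on $\Sigma$. By \eqref{confofx} and Proposition \ref{deriP}(c) the position field satisfies $x^T=\nabla^\Sigma V_0$, and a direct computation using $\overline\nabla^2V_0=V_0\overline g$ gives $\mathrm{Hess}^\Sigma V_0=V_0\,g-\overline g(x,\nu)\,h$. Contracting against $T_{k-1}$ and inserting $\mathrm{tr}(T_{k-1})=(n-k+1)\binom{n}{k-1}H_{k-1}$ and $\mathrm{tr}(T_{k-1}\circ h)=(n-k+1)\binom{n}{k-1}H_k$, I obtain
\[
\mathrm{div}_\Sigma\big(T_{k-1}(x^T)\big)=(n-k+1)\tbinom{n}{k-1}\big(V_0H_{k-1}-H_k\,\overline g(x,\nu)\big).
\]
Integrating and applying the divergence theorem then reduces \eqref{MinkowskiforP} to identifying the boundary integral $\int_{\partial\Sigma}\overline g(T_{k-1}(x^T),\mu)\,ds$ with $(n-k+1)\binom{n}{k-1}\cos\theta\int_\Sigma H_{k-1}\,\overline g(Y_{n+1},\nu)\,dA$.

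I would analyze this boundary integral in three moves. First, the principal-direction Lemma \eqref{princmu} gives $T_{k-1}\mu=\sigma_\mu\mu$, so $\overline g(T_{k-1}(x^T),\mu)=\sigma_\mu\,\overline g(x,\mu)$; since $x$ is tangent to $P$ one has $\overline g(x,\overline N)=0$, and \eqref{angle} yields $\overline g(x,\mu)=\cos\theta\,\overline g(x,\overline\nu)$. Second, expanding $h(e_\alpha,e_\beta)=\overline g(\overline\nabla_{e_\alpha}\nu,e_\beta)$ with \eqref{angle} and using that $P$ is totally geodesic gives $h(e_\alpha,e_\beta)=\sin\theta\,h^{\partial\Sigma}(e_\alpha,e_\beta)$ for a frame $\{e_\alpha\}$ of $\partial\Sigma$; hence $\sigma_\mu=\sin^{k-1}\theta\,S_{k-1}^{P}$, where $S_{k-1}^{P}$ is the $(k-1)$-th elementary symmetric function of $\partial\Sigma$ regarded as a closed hypersurface of the totally geodesic $P\cong\mathbb H^{n}$. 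Thus the boundary term becomes the intrinsic quantity $\cos\theta\,\sin^{k-1}\theta\int_{\partial\Sigma}S_{k-1}^{P}\,\overline g(x,\overline\nu)\,ds$.

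Third, to produce the interior $Y_{n+1}$-integral I would chain two auxiliary identities. Running the closed Minkowski formula inside $P$ (integrate $\mathrm{div}_{\partial\Sigma}(T^{P}_{k-2}\nabla^{\partial\Sigma}V_0)=0$, again with $\overline\nabla^P x=V_0\overline g$) shifts the index, giving $(k-1)\int_{\partial\Sigma}S^{P}_{k-1}\overline g(x,\overline\nu)\,ds=(n-k+1)\int_{\partial\Sigma}V_0\,S^{P}_{k-2}\,ds$. Then I repeat the opening computation with the Killing field $Y_{n+1}$ and with $T_{k-2}$: because $\overline\nabla Y_{n+1}$ is antisymmetric (Proposition \ref{deriP}(d)), its symmetric contraction against $T_{k-2}$ vanishes and $\mathrm{div}_\Sigma(T_{k-2}(Y_{n+1}^T))=-(n-k+2)\binom{n}{k-2}H_{k-1}\,\overline g(Y_{n+1},\nu)$; evaluating its boundary term through the key normalization $Y_{n+1}=-V_0\overline N$ on $P$ (which on $\partial\Sigma$ reads $Y_{n+1}^T=-V_0\sin\theta\,\mu$) gives $\sin^{k-1}\theta\int_{\partial\Sigma}V_0S^{P}_{k-2}\,ds=(n-k+2)\binom{n}{k-2}\int_\Sigma H_{k-1}\overline g(Y_{n+1},\nu)\,dA$. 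Assembling the three moves, the powers of $\sin\theta$ cancel and the binomial identity $(n-k+2)\binom{n}{k-2}=(k-1)\binom{n}{k-1}$ makes the constants telescope into exactly $(n-k+1)\binom{n}{k-1}\cos\theta\int_\Sigma H_{k-1}\overline g(Y_{n+1},\nu)\,dA$, proving \eqref{MinkowskiforP}. The degenerate case $k=1$ is treated separately: there the chain collapses to $\int_{\partial\Sigma}\overline g(x,\overline\nu)\,ds=\int_T\Delta^{P}V_0\,dA_T=n\int_T V_0\,dA_T$, combined with the vanishing flux of the Killing field $Y_{n+1}$ across $\partial\Omega=\Sigma\cup T$ and $Y_{n+1}=-V_0\overline N$ on $P$.

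I expect the boundary-to-interior conversion to be the crux. The difficulty is structural: the term $H_{k-1}\,\overline g(Y_{n+1},\nu)$ cannot be generated by any interior divergence of $T_{k-1}$ applied to an ambient field, since a Killing field like $Y_{n+1}$ contributes only at order $H_k$ (through $\mathrm{tr}(T_{k-1}\circ h)$), whereas the order $H_{k-1}$ is tied to a conformal factor that $Y_{n+1}$ lacks. The $Y_{n+1}$-term is therefore genuinely of boundary origin, and extracting it forces the descent one dimension into $P$, the use of the constant-angle relation $h|_{T\partial\Sigma}=\sin\theta\,h^{\partial\Sigma}$ and of $Y_{n+1}=-V_0\overline N$ on $P$, together with a careful bookkeeping of the $\sin\theta$ powers and binomial coefficients so that everything telescopes.
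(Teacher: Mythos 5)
Your chain of identities is correct and it arrives at \eqref{MinkowskiforP}, but it takes a genuinely different route from the paper. The paper never descends into $P$: instead it introduces the tangential field $U_{n+1}=\overline g(\nu,\bar E_{n+1})x-\overline g(x,\nu)\bar E_{n+1}$ on $\Sigma$ and computes (using Proposition \ref{deriP}) that the symmetric part of $\overline g(\nabla_{e_i}U_{n+1},e_j)$ is exactly $\overline g(Y_{n+1},\nu)\delta_{ij}$, i.e.\ pure trace; contracting with $T_{k-1}$ then produces the interior term $(n-k+1)\binom{n}{k-1}H_{k-1}\overline g(Y_{n+1},\nu)$ directly, with boundary flux $\int_{\partial\Sigma}S_{k-1;\mu}\overline g(x,\overline\nu)\,ds$ matching the one from the $x^T$ computation. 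This refutes the ``structural difficulty'' you assert at the end: the $H_{k-1}\,\overline g(Y_{n+1},\nu)$ term \emph{can} be generated by an interior divergence of $T_{k-1}$ applied to a (non-Killing, tangential) field, because the relevant conformal factor is supplied by the normal component $\overline g(Y_{n+1},\nu)$ rather than by a conformal factor of $Y_{n+1}$ itself. What the paper's construction buys is uniformity: one computation covers all $k\in\{1,\dots,n\}$ and uses only integrals over $\Sigma$ and $\partial\Sigma$, so it is valid for merely immersed hypersurfaces as the statement requires (and as Theorem \ref{constquo} needs). Your descent through $P$ — the relation $h|_{T\partial\Sigma}=\sin\theta\,h^{\partial\Sigma}$, the closed Minkowski formula on $\partial\Sigma\subset P$ shifting $S^P_{k-1}$ to $S^P_{k-2}$, and the $T_{k-2}(Y_{n+1}^T)$ identity climbing back to $\Sigma$ via $Y_{n+1}=-V_0\overline N$ on $P$ — is a nice alternative that makes the geometry of the boundary term explicit, and the constants do telescope via $(n-k+2)\binom{n}{k-2}=(k-1)\binom{n}{k-1}$; but it only runs for $k\geq 2$, and your separate $k=1$ argument invokes the domains $T$ and $\Omega$ and hence silently assumes embeddedness. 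To cover the immersed $k=1$ case you would still need something like the paper's $U_{n+1}$ identity (its $k=1$ instance $n\int_\Sigma\overline g(Y_{n+1},\nu)\,dA=\int_{\partial\Sigma}\overline g(x,\overline\nu)\,ds$), so that gap should be closed before the argument fully replaces the paper's proof.
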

	\begin{proof}
		Restricting \eqref{confofx} to $\Sigma$ and let $\{e_i\}_{i=1}^n$ be the orthonormal frame on $\Sigma$, we have 
		\begin{align}\label{conformalofx}
			\overline g(\nabla_{e_i}x^T,e_j)=V_0\delta_{ij}-h_{ij}\overline g(x,\nu),
		\end{align}
		where $\nabla$ is the Levi-Civita connection of $(\Sigma, g)$. Applying the Newton transformation $T_{k-1}$ on both sides of \eqref{conformalofx}, we get
		\begin{equation}\label{diverx}
			\begin{aligned}
				&(n-k+1)\binom{n}{k-1}\int_\Sigma V_0H_{k-1}-H_k\overline g(x,\nu)dA\\
				=&\int_{\Sigma}\mbox{div}_{\Sigma}(T_{k-1}(x^T))dA\\
				=&\int_{\partial\Sigma}T_{k-1}(x^T,\mu)ds\\
				=&\cos\theta\int_{\partial\Sigma}S_{k-1;\mu}\overline g(x,\overline\nu)ds.
			\end{aligned}
		\end{equation}
		In the last equality, we use \eqref{angle} and the fact that $\mu$ is a principal direction of $\Sigma$ and $\overline g(x,\overline N)=0$. Therefore, $T_{k-1}(x^T,\mu)=S_{k-1;\mu}\overline g(x,\mu)=\cos\theta S_{k-1;\mu}\overline g(x,\overline\nu)$ where $S_{k-1;\mu}$ is given by
		\begin{align*}
			S_{k-1;\mu}=\sum_{\substack{1\leq\alpha_1<\dots<\alpha_{k-1}\leq n-1\\\mu\notin\{e_{\alpha_1},\dots,e_{\alpha_{k-1}}\}}}\lambda_{\alpha_1}\lambda_{\alpha_2}\cdots\lambda_{\alpha_{k-1}}.
		\end{align*}
		On the other hand, let $U_{n+1}=\overline g(\nu,e^{-u}E_{n+1})x-\overline g(x,\nu)e^{-u}E_{n+1}$ be a tangent vector field on $\Sigma$. We can easily see that $\overline g(U_{n+1},\nu)=0$ along $\Sigma$. Then using Proposition \ref{deriP}, we have 
		\begin{equation}\label{nablaU}
			\begin{aligned}
				\overline g(\nabla_{e_i}U_{n+1},e_j)=&\overline g\left(\overline\nabla_{e_i}\left(\overline g(\nu,e^{-u}E_{n+1})x^T-\overline{g}(x,\nu)(e^{-u}E_{n+1})^T\right),e_j\right)\\
				=&-e^{-u}\overline g(x,\nu)\overline g(e^{-u}E_{n+1},e_i)\overline g(x,e_j)+\overline g(e^{-u}E_{n+1},\nu)(V_0\delta_{ij}\\&
				-h_{ij}\overline g(x,\nu))-\overline g(x,\nu)\big[e^{-u}(\delta_{ij}\overline g(x,e^{-u}E_{n+1})\\&-\overline g(e^{-u}E_{n+1},e_i)\overline g(x,e_j)-h_{ij}\overline g(\nu,e^{-u}E_{n+1})\big]\\
				=&\left(\overline g(e^{-u}E_{n+1},\nu)V_0-e^{-u}\overline g(x,\nu)\overline g(x,e^{-u}E_{n+1})\right)\delta_{ij}\\
				=&\overline g(Y_{n+1},\nu)\delta_{ij},
			\end{aligned}
		\end{equation}
		where we use the fact that $e^{-u}V_0E_{n+1}-\langle x,E_{n+1}\rangle x=Y_{n+1}$. Again, applying the Newton transformation $T_{k-1}$ on both sides of \eqref{nablaU} and integrating on $\Sigma$, we have
		\begin{equation}\label{diverU}
			\begin{aligned}
				&(n-k+1)\binom{n}{k-1}\int_\Sigma H_{k-1}\overline g(Y_{n+1},\nu)dA\\=&\int_\Sigma (T_{k-1})_{ij}\overline g(\nabla_{e_i}U_{n+1},e_j)dA
				=\int_{\partial\Sigma}T_{k-1}(U_{n+1},\mu)ds\\
				=&\int_{\partial\Sigma}S_{k-1;\mu}\overline g(U_{n+1},\mu)ds\\
				=&\int_{\partial\Sigma}S_{k-1;\mu}\overline g(x,\overline\nu)ds,
			\end{aligned}
		\end{equation}
		where in the last equality, we use \eqref{angle} on $\partial\Sigma$ to obtain
		\begin{equation*}
			\begin{aligned}
				\overline g(U_{n+1},\mu)=&\overline g(\nu,e^{-u}E_{n+1})\overline g(x,\mu)-\overline g(x,\nu)\overline g(e^{-u}E_{n+1},\mu)\\
				=&\left(\cos^2\theta\overline g(x,\overline\nu)+\sin^2\theta\overline g(x,\overline\nu)\right)=\overline g(x,\overline\nu).
			\end{aligned}
		\end{equation*}
		Combining \eqref{diverx} and \eqref{diverU}, we obtain the Minkowski type formula \eqref{MinkowskiforP}.
		
	\end{proof}
	\subsection{A Minkowski type formula for capillary hypersurfaces in Poincar\'{e} half space model}
	\hfill\\
	\par In Poincar\'{e} half space model, the inner normal vector field along $L_{\phi}$ is given by
	\begin{equation}\label{normalNequi}
		\overline N=x_{n+1}(-\cos\phi E_{n+1}+\sin\phi a).
	\end{equation}
	Similarly, it can be easily to observe that the conformal Killing vector field $X_{n+1}=\widetilde x-E_{n+1}$ is tangential to $L_{\phi}$. Indeed, we have
	\begin{equation}\label{NnormalX}
		\begin{aligned}
			\overline g(X_{n+1},\overline N)=&x_{n+1}\overline g(\widetilde x-E_{n+1},-\cos\phi E_{n+1}+\sin\phi a)\\
			=&-\cos\phi+\sin\phi\frac{x_a}{x_{n+1}}+\frac{\cos\phi}{x_{n+1}}\\
			=&0,
		\end{aligned}
	\end{equation}
	where in the last equality we use \eqref{defiequiL}.
	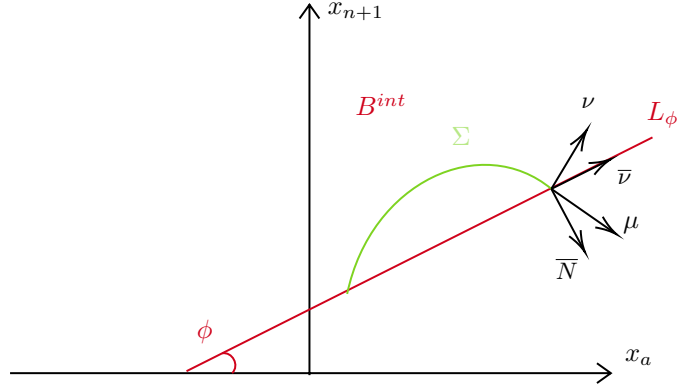
\begin{figure}[ht]
		\centering

		\tikzset{every picture/.style={line width=0.75pt}} 
		
		\begin{tikzpicture}[x=0.75pt,y=0.75pt,yscale=-1,xscale=1]
			
			\draw  (147.3,217) -- (450.3,217)(298.3,31) -- (298.3,218) (443.3,212) -- (450.3,217) -- (443.3,222) (293.3,38) -- (298.3,31) -- (303.3,38)  ;
			\draw [color={rgb, 255:red, 208; green, 2; blue, 27 }  ,draw opacity=1 ]   (471.3,98) -- (236.3,216) ;
			\draw [color={rgb, 255:red, 208; green, 2; blue, 27 }  ,draw opacity=1 ]   (254.3,207) .. controls (259.3,206) and (263.3,213) .. (259.3,217) ;
			\draw [color={rgb, 255:red, 126; green, 211; blue, 33 }  ,draw opacity=1 ]   (317.3,177) .. controls (329.3,120) and (385.3,95) .. (420.3,124) ;
			\draw [color={rgb, 255:red, 0; green, 0; blue, 0 }  ,draw opacity=1 ]   (420.3,124) -- (436.36,154.23) ;
			\draw [shift={(437.3,156)}, rotate = 242.02] [color={rgb, 255:red, 0; green, 0; blue, 0 }  ,draw opacity=1 ][line width=0.75]    (10.93,-3.29) .. controls (6.95,-1.4) and (3.31,-0.3) .. (0,0) .. controls (3.31,0.3) and (6.95,1.4) .. (10.93,3.29)   ;
			\draw    (420.3,124) -- (448.51,109.89) ;
			\draw [shift={(450.3,109)}, rotate = 153.43] [color={rgb, 255:red, 0; green, 0; blue, 0 }  ][line width=0.75]    (10.93,-3.29) .. controls (6.95,-1.4) and (3.31,-0.3) .. (0,0) .. controls (3.31,0.3) and (6.95,1.4) .. (10.93,3.29)   ;
			\draw    (420.3,124) -- (437.27,95.71) ;
			\draw [shift={(438.3,94)}, rotate = 120.96] [color={rgb, 255:red, 0; green, 0; blue, 0 }  ][line width=0.75]    (10.93,-3.29) .. controls (6.95,-1.4) and (3.31,-0.3) .. (0,0) .. controls (3.31,0.3) and (6.95,1.4) .. (10.93,3.29)   ;
			\draw    (420.3,124) -- (451.66,145.86) ;
			\draw [shift={(453.3,147)}, rotate = 214.88] [color={rgb, 255:red, 0; green, 0; blue, 0 }  ][line width=0.75]    (10.93,-3.29) .. controls (6.95,-1.4) and (3.31,-0.3) .. (0,0) .. controls (3.31,0.3) and (6.95,1.4) .. (10.93,3.29)   ;
			
			\draw (166,143) node [anchor=north west][inner sep=0.75pt]   [align=left] {$ $};
			\draw (306,28.4) node [anchor=north west][inner sep=0.75pt]    {$x_{n+1}$};
			\draw (456,203.4) node [anchor=north west][inner sep=0.75pt]    {$x_{a}$};
			\draw (240,188.4) node [anchor=north west][inner sep=0.75pt]  [color={rgb, 255:red, 208; green, 2; blue, 27 }  ,opacity=1 ]  {$\phi $};
			\draw (320,74.4) node [anchor=north west][inner sep=0.75pt]  [color={rgb, 255:red, 208; green, 2; blue, 27 }  ,opacity=1 ]  {$B^{int}$};
			\draw (368.44,91.08) node [anchor=north west][inner sep=0.75pt]  [color={rgb, 255:red, 126; green, 211; blue, 33 }  ,opacity=1 ] [align=left] {$\displaystyle \textcolor[rgb]{0.72,0.91,0.53}{\Sigma }$};
			\draw (421,157.4) node [anchor=north west][inner sep=0.75pt]  [font=\small,color={rgb, 255:red, 0; green, 0; blue, 0 }  ,opacity=1 ]  {$\overline{N}$};
			\draw (467,78.4) node [anchor=north west][inner sep=0.75pt]  [color={rgb, 255:red, 208; green, 2; blue, 27 }  ,opacity=1 ]  {$L_{\phi }$};
			\draw (452.3,112.4) node [anchor=north west][inner sep=0.75pt]  [font=\small]  {$\overline{\nu }$};
			\draw (434,76.4) node [anchor=north west][inner sep=0.75pt]  [font=\small]  {$\nu $};
			\draw (455,136.4) node [anchor=north west][inner sep=0.75pt]    {$\mu $};

		\end{tikzpicture}
		\caption{Capillary hypersurface $\Sigma$ supported on an equidistant hypersurface $L_{\phi}$.}
	\end{figure}
	\par Thus, we have the following Minkowski type formula analogously.
	\begin{prop}\label{minpropequiint}
		Let $x:\Sigma\subset B^{\mrm{int}}_{\phi,a}$ be a compact, immersed capillary hypersurface supported on $L_{\phi,a}$, intersecting $L_{\phi,a}$ at a constant angle $\theta$. For $k\in\{1,2,\dots,n\}$, we have 
		\begin{equation}\label{minkequiint}
			\int_\Sigma[H_{k-1}(V_{n+1}-\cos\theta\sec\phi\overline g(\widetilde x,\nu))-H_k\overline g(X_{n+1},\nu)]dA=0.
		\end{equation}
	\end{prop}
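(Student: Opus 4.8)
The plan is to follow the proof of Proposition \ref{propP} verbatim in spirit, replacing the closed conformal field $x$ (with $\overline\nabla x=V_0\overline g$) and the Killing field $Y_{n+1}$ of the ball model by their half-space counterparts. The two ingredients are: (i) the conformal Killing field $X_{n+1}=\widetilde x-E_{n+1}$, whose symmetrized covariant derivative equals $V_{n+1}\overline g$ by \eqref{confXn+1}; and (ii) an auxiliary tangential field on $\Sigma$ built from $X_{n+1}$ and the globally defined unit vector field $N_0:=-\cos\phi\,\overline E_{n+1}+\sin\phi\,\overline a$, which by \eqref{normalNequi} coincides with the normal $\overline N$ of $L_{\phi,a}$. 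Each field produces one divergence identity whose boundary term is proportional to $\int_{\partial\Sigma}S_{k-1;\mu}\overline g(X_{n+1},\overline\nu)\,ds$, and eliminating this common integral yields \eqref{minkequiint}.

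First I would restrict \eqref{confXn+1} to $\Sigma$. Writing $X_{n+1}=X_{n+1}^T+\overline g(X_{n+1},\nu)\nu$ and using $\overline\nabla_{e_i}\nu=\sum_j h_{ij}e_j$, the symmetric part of the tangential derivative is
\begin{align*}
\tfrac12\big(\overline g(\nabla_{e_i}X_{n+1}^T,e_j)+\overline g(\nabla_{e_j}X_{n+1}^T,e_i)\big)=V_{n+1}\delta_{ij}-h_{ij}\overline g(X_{n+1},\nu).
\end{align*}
Contracting with the symmetric, divergence-free (as $\mathbb H^{n+1}$ is a space form) Newton tensor $T_{k-1}$, and using $\tr(T_{k-1})=(n-k+1)\binom{n}{k-1}H_{k-1}$ and $\tr(T_{k-1}\circ h)=(n-k+1)\binom{n}{k-1}H_k$, integration and the divergence theorem give
\begin{align*}
(n-k+1)\binom{n}{k-1}\int_\Sigma\big(V_{n+1}H_{k-1}-H_k\overline g(X_{n+1},\nu)\big)dA=\int_{\partial\Sigma}T_{k-1}(X_{n+1}^T,\mu)\,ds.
\end{align*}
Since $X_{n+1}$ is tangent to $L_{\phi,a}$ by \eqref{NnormalX} and $\mu$ is a principal direction by \eqref{princmu}, the boundary integrand reduces through \eqref{angle} to $\cos\theta\,S_{k-1;\mu}\overline g(X_{n+1},\overline\nu)$, exactly as in \eqref{diverx}.

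For the second identity I would set $W:=\overline g(\nu,N_0)X_{n+1}-\overline g(X_{n+1},\nu)N_0$, which satisfies $\overline g(W,\nu)=0$ and is thus tangent to $\Sigma$. The essential computation is that of $\overline\nabla_Y N_0$: using Proposition \ref{hsderi}(d) together with the consequence $\overline\nabla_Y\overline a=\overline g(Y,\overline a)\overline E_{n+1}$ of Proposition \ref{hsderi}, one obtains
\begin{align*}
\overline\nabla_Y N_0=\cos\phi\,Y+\overline g(N_0,Y)\overline E_{n+1},
\end{align*}
which records the umbilicity of $L_{\phi,a}$ with principal curvature $\cos\phi$ and is where the dependence on $\phi$ enters. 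Differentiating $W$ along $\Sigma$ and projecting tangentially, the Weingarten terms (those carrying a factor $h_{il}$) cancel upon contraction with $T_{k-1}$ because $T_{k-1}$ commutes with $h$, while the remaining rank-one cross terms cancel by the symmetry of $T_{k-1}$; only a pure $\delta_{ij}$ term survives. Applying $T_{k-1}$ and integrating then produces a bulk integral of $H_{k-1}$ against a linear functional of $\nu$, and on $\partial\Sigma$ the relations \eqref{angle} and \eqref{NnormalX} reduce $\overline g(W,\mu)$ to $\pm\overline g(X_{n+1},\overline\nu)$, so the boundary term is again a multiple of $\int_{\partial\Sigma}S_{k-1;\mu}\overline g(X_{n+1},\overline\nu)\,ds$.

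Finally I would eliminate the common boundary integral between the two identities, obtaining \eqref{minkequiint}. I expect the main obstacle to be the second identity: carrying out the covariant differentiation of $W$ via Proposition \ref{hsderi} and verifying that every term not proportional to $\delta_{ij}$ drops out after contraction with the symmetric $T_{k-1}$, so that the surviving bulk coefficient collapses to precisely the factor displayed in \eqref{minkequiint}. Checking this cancellation and fixing the normalization of the boundary matching is the delicate part; the first identity and the combination step are routine given Proposition \ref{propP}.
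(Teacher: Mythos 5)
Your first divergence identity is exactly the paper's \eqref{minkequi1} and is fine. The gap is in the second one. If you carry out the computation for $W=\overline g(\nu,N_0)X_{n+1}-\overline g(X_{n+1},\nu)N_0$ with $N_0=-\cos\phi\,\overline E_{n+1}+\sin\phi\,\overline a$, the cancellations you anticipate do occur (the $h$-terms die against the symmetric $T_{k-1}\circ h$, the rank-one cross terms die by symmetry of $T_{k-1}$), but the surviving $\delta_{ij}$-coefficient is
\begin{equation*}
V_{n+1}\overline g(\nu,N_0)-\cos\phi\,\overline g(X_{n+1},\nu)=\sin\phi\,\overline g(a,\nu)-\cos\phi\,\overline g(\widetilde x,\nu),
\end{equation*}
since $V_{n+1}N_0=-\cos\phi\,E_{n+1}+\sin\phi\,a$ and the $E_{n+1}$-contributions cancel against those from $X_{n+1}=\widetilde x-E_{n+1}$. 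On $\partial\Sigma$ one gets $\overline g(W,\mu)=-\overline g(X_{n+1},\overline\nu)$, so eliminating the common boundary integral against the first identity yields
\begin{equation*}
\int_\Sigma\Bigl[H_{k-1}\bigl(V_{n+1}-\cos\theta\,\overline g(\cos\phi\,\widetilde x-\sin\phi\,a,\nu)\bigr)-H_k\overline g(X_{n+1},\nu)\Bigr]dA=0,
\end{equation*}
which is \emph{not} \eqref{minkequiint}: the stated formula has $\cos\theta\sec\phi\,\overline g(\widetilde x,\nu)$, and the pointwise difference of the two integrands is $\cos\theta\sin\phi\,H_{k-1}\overline g(\tan\phi\,\widetilde x+a,\nu)$. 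For $\phi=0$ this vanishes and your route is complete, but for $\phi>0$ you would still have to prove $\int_\Sigma H_{k-1}\overline g(\tan\phi\,\widetilde x+a,\nu)\,dA=0$. That identity is true but not free: $\tan\phi\,\widetilde x+a$ is a Killing field tangent to $L_{\phi,a}$, and contracting its restriction to $\Sigma$ with a Newton tensor produces $\int_\Sigma H_k\overline g(\cdot,\nu)$ (not $H_{k-1}$) plus a boundary term, so it does not follow from a one-line flux argument when $k\geq 2$; it needs its own divergence identity of the same type.

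The paper sidesteps this by choosing the auxiliary field $Z_{n+1}=\overline g(\widetilde x,\nu)\overline E_{n+1}-\overline g(\overline E_{n+1},\nu)\widetilde x$, built from $\widetilde x$ and $\overline E_{n+1}$ rather than from $X_{n+1}$ and the extended normal $N_0$. Its symmetrized tangential derivative is exactly $-\overline g(\widetilde x,\nu)\delta_{ij}$, and on $\partial\Sigma$ one computes $\overline g(Z_{n+1},\mu)=-\cos\phi\,\overline g(X_{n+1},\overline\nu)$ using $x_{n+1}=1+\tan\phi\,x_a$ on $L_{\phi,a}$; it is this factor $\cos\phi$ in the boundary pairing that produces the $\sec\phi$ in \eqref{minkequiint}. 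So either switch to the paper's $Z_{n+1}$, or keep your $W$ and supply the additional identity above (e.g.\ by running the same argument a third time with the tangential Killing field $\sin\phi\,\widetilde x+\cos\phi\,a$ and comparing boundary terms). As written, the proposal proves a correct but different Minkowski-type formula and does not establish the stated one for $\phi>0$.
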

	\begin{proof}
		As the proof of Proposition \ref{propP}, restricting \eqref{confXn+1} to $\Sigma$ and using the divergence theorem, we have 
		\begin{equation}\label{minkequi1}
			\begin{aligned}
				&(n-k+1)\binom{n}{k-1}\int_\Sigma V_{n+1}H_{k-1}-H_k\overline g(X_{n+1},\nu)dA\\
				=&\int_{\partial\Sigma} T_{k-1}(X_{n+1},\mu)ds\\
				=&\cos\theta\int_{\partial\Sigma} S_{k-1;\mu}\overline g(X_{n+1},\overline\nu)ds,
			\end{aligned}
		\end{equation}
		where in the last equality we use \eqref{angle}, \eqref{NnormalX} and the fact that $\mu$ is a principal direction of $\Sigma$ along $\partial\Sigma$.
		\par On the other hand, consider a vector field $Z_{n+1}=\overline g(x,\nu)\overline E_{n+1}-\overline g(\overline E_{n+1},\nu)x$. By a direct computation, we have 
		\begin{equation*}
			\begin{aligned}
				&\overline g(\nabla_{e_i}Z_{n+1},e_j)\\
				=&-\overline g(e_i,\overline E_{n+1})\overline g(\widetilde x,\nu)\overline g(\overline E_{n+1},e_j)+\overline g(\widetilde x,e_i)\overline g(\overline E_{n+1},\nu)\overline g(\overline E_{n+1},e_j)\\
				&+h_{ik}\overline g(\widetilde x,e_k)\overline g(\overline E_{n+1},e_j)+\overline g(\widetilde x,\nu)(\overline g(\overline E_{n+1},e_i)\overline g(\overline E_{n+1},e_j)-\delta_{ij})\\
				&-\overline g(\overline E_{n+1},e_i)\overline g(\overline E_{n+1},\nu)\overline g(\widetilde x,e_j)-h_{ik}\overline g(\overline E_{n+1},e_k)\overline g(\widetilde x,e_j)\\
				&-\overline g(\overline E_{n+1},\nu)\left[-\overline g(e_i,\overline E_{n+1})\overline g(\widetilde x,e_j)+\overline g(\widetilde x,e_j)\overline g(\overline E_{n+1},e_i)\right]\\
				=&-\delta_{ij}\overline g(\widetilde x,\nu)+\overline g(\overline E_{n+1},\nu)\left(\overline g(\widetilde x,e_i)\overline g(\overline E_{n+1},e_j)-\overline g(\overline E_{n+1},e_i)\overline g(\widetilde x,e_j)\right).
			\end{aligned}
		\end{equation*}
		Therefore,
		\begin{align}\label{divz}
			\frac{1}{2}\left(\overline g(\nabla_{e_i}Z_{n+1},e_j)+\overline g(\nabla_{e_j}Z_{n+1},e_i)\right)=-\overline g(\widetilde x,\nu)\delta_{ij} .
		\end{align}
		Using the divergence Theorem, \eqref{angle}, \eqref{princmu} and \eqref{normalNequi}, we have 
		\begin{equation}\label{minkequi2}
			\begin{aligned}
				&-(n-k+1)\binom{n}{k-1}\int_\Sigma H_{k-1}\overline g(\widetilde x,\nu)dA\\
				=&\int_{\partial\Sigma}T_{k-1}(Z_{n+1},\mu)ds\\
				=&\int_{\partial\Sigma}S_{k-1;\mu}\overline g(Z_{n+1},\mu)ds\\
				=&\int_{\partial\Sigma}S_{k-1;\mu}\left[\overline g(\widetilde x,-\cos\theta\overline N+\sin\theta\overline\nu)\overline g(\overline E_{n+1},\sin\theta\overline N+\cos\theta\overline\nu)\right.\\
				&-\left.\overline g(\overline E_{n+1},-\cos\theta\overline N+\sin\theta\overline\nu)\overline g(\widetilde x,\sin\theta\overline N+\cos\theta\overline\nu)\right]ds\\
				=&-\int_{\partial\Sigma}S_{k-1;\mu}\left[(-\cos\phi+\sin\phi\frac{x_a}{x_{n+1}})\overline g(\overline E_{n+1},\overline\nu)+\cos\phi\overline g(\widetilde x,\overline\nu)\right]ds\\
				=&-\int_{\partial\Sigma}\cos\phi S_{k-1;\mu}\overline g(X_{n+1},\overline\nu)ds,
			\end{aligned}
		\end{equation}
		where the last equality we use the fact from \eqref{defiequiL} that on $\partial\Sigma\subset L_{\phi,a}$, $x_{n+1}=\tan\phi x_a+1$. As consequence, the proof follows by combining \eqref{minkequi1} and \eqref{minkequi2}. 
	\end{proof}
	
	\section{Proofs of Theorem \ref{HKineq} and Theorem \ref{alex}}
	
	Reilly established an important integral formula for general Riemannian manifolds in \cite{Reilly1977AppHess}. Reilly's formula is very powerful to obtain some properties on the hypersurfaces in Riemannian manifolds with non-negative Ricci curvatures. For instance, Ros used it to establish a Heintze-Karcher type inequality in \cite{Ros1987compacthighercurv}. Later on, Li and Xia  \cite{LiXia2019substatic} generalized it to a weighted Reilly-type formula to obtain a broader range of results. Let $(\overline\nabla,\overline\Delta, \overline\nabla^2)$ and $(\nabla,\Delta,\nabla^2)$  be the triples of Levi-Civita connection, Laplacian and Hessian operator on $(\overline\Omega,\overline g)$ and $(\partial\Omega, g)$ respectively. Then their Reilly-type formula is as follows:
	\begin{thm}[See \cite{LiXia2019substatic}]
		Let $(\overline\Omega, \overline g)$ be a compact Riemannian manifold with piecewise smooth boundary $\partial\Omega$, $V\in C^\infty(\overline\Omega)$ be a positive smooth function. Suppose $\frac{\overline\nabla^2 V}{V}$ is continuous up to $\partial\Omega$. Then for any smooth function $f\in C^\infty(\overline\Omega)$, the following identity holds.
		\begin{equation}\label{reilly}
			\begin{aligned}
				&\int_\Omega V\left(\left(\overline\Delta f-\frac{\overline\Delta V}{V}f\right)^2-\left|\overline\nabla^2f-\frac{\overline\nabla^2V}{V}f\right|^2\right)d\Omega \\
				=\;&\int_\Omega\left(\overline\Delta V\overline g-\overline\nabla^2 V+V\overline {\mbox{Ric}}\right)\left(\overline\nabla f-\frac{\overline \nabla V}{V}f,\overline\nabla f-\frac{\overline \nabla V}{V}f\right)d\Omega \\
				&+\int_{\partial\Omega}V\left(f_{\nu_{\partial\Omega}}-\frac{V_{\nu_{\partial\Omega}}}{V}f\right)\left(\Delta f-\frac{\Delta V}{V}f\right)dA \\
				&-\int_{\partial\Omega}Vg\left(\nabla\left( f_{\nu_{\partial\Omega}}-\frac{V_{\nu_{\partial\Omega}}}{V}f\right),\nabla f-\frac{\nabla V}{V}f\right)dA\\
				&+\int_{\partial\Omega}nVH_1\left(f_{\nu_{\partial\Omega}}-\frac{V_{\nu_{\partial\Omega}}}{V}f\right)^2dA \\
				&+\int_{\partial\Omega}\left(h-\frac{V_{\nu_{\partial\Omega}}}{V}g\right)\left(\nabla f-\frac{\nabla V}{V}f,\nabla f-\frac{\nabla V}{V}f\right)dA,
			\end{aligned}
		\end{equation}
		where $\nu_{\partial\Omega}$ is the outer normal vector field on $\partial\Omega$, $h(\cdot,\cdot)$ and $H_1$ are the second fundamental form and mean curvature on $\partial \Omega$ respectively, and $\Ric$ is the Ricci curvature in $\Omega$.
	\end{thm}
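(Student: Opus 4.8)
The plan is to prove this as a weighted generalization of Reilly's classical integral formula, obtained by integrating a pointwise weighted Bochner-type identity over $\overline\Omega$ and converting the resulting interior divergences into boundary integrals. The organizing principle is that every occurrence of $\overline\nabla f$, $\overline\nabla^2 f$ and $\overline\Delta f$ appearing in the classical Reilly formula should be replaced by its $V$-modified counterpart $\overline\nabla f-\frac{\overline\nabla V}{V}f$, $\overline\nabla^2 f-\frac{\overline\nabla^2 V}{V}f$ and $\overline\Delta f-\frac{\overline\Delta V}{V}f$, and the whole point of carrying the positive weight $V$ is to make the individual terms regroup into these modified squares together with the curvature coefficient $\overline\Delta V\,\overline g-\overline\nabla^2 V+V\overline{\mathrm{Ric}}$.

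First I would record the pointwise Bochner formula
\[
\tfrac12\overline\Delta|\overline\nabla f|^2=|\overline\nabla^2 f|^2+\overline g(\overline\nabla f,\overline\nabla\overline\Delta f)+\overline{\mathrm{Ric}}(\overline\nabla f,\overline\nabla f),
\]
multiply it by $V$, and integrate over $\Omega$. The term $\int_\Omega V\,\overline g(\overline\nabla f,\overline\nabla\overline\Delta f)$ is then integrated by parts, and the Ricci identity is invoked to commute the third covariant derivatives of $f$; this is the step that exposes the Ricci curvature. Carrying out the integration by parts so as to keep things symmetric in $V$ and $f$ produces the terms $(\overline\Delta f)^2$, the mixed terms $\frac{\overline\Delta V}{V}f\,\overline\Delta f$ and $\frac{1}{V}\langle\overline\nabla^2 V,\overline\nabla^2 f\rangle f$, a curvature term, and a boundary integral along $\partial\Omega$. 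The essential algebraic move is then to add and subtract the pure-weight quantity $\frac{(\overline\Delta V)^2-|\overline\nabla^2 V|^2}{V^2}f^2$: this completes the square in the interior, turning the integrand into $V\big((\overline\Delta f-\frac{\overline\Delta V}{V}f)^2-|\overline\nabla^2 f-\frac{\overline\nabla^2 V}{V}f|^2\big)$ plus exactly $(\overline\Delta V\,\overline g-\overline\nabla^2 V+V\overline{\mathrm{Ric}})(\overline\nabla f-\frac{\overline\nabla V}{V}f,\overline\nabla f-\frac{\overline\nabla V}{V}f)$.

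Next I would decompose the boundary integral over $\partial\Omega$, writing $\nu$ for the outer normal $\nu_{\partial\Omega}$. Splitting $\overline\nabla f$ into its tangential part $\nabla f$ and its normal part $f_\nu\,\nu$, and likewise for $V$, and then using the Gauss–Weingarten relations to rewrite the normal derivative of a tangential gradient through the second fundamental form $h$ and the mean curvature $H_1$ of $\partial\Omega$, yields precisely the four boundary contributions in the statement: the product $\big(f_\nu-\frac{V_\nu}{V}f\big)\big(\Delta f-\frac{\Delta V}{V}f\big)$, its tangential-gradient companion $g\big(\nabla(f_\nu-\frac{V_\nu}{V}f),\nabla f-\frac{\nabla V}{V}f\big)$, the term $nVH_1\big(f_\nu-\frac{V_\nu}{V}f\big)^2$, and the quadratic form $(h-\frac{V_\nu}{V}g)$ evaluated on the modified tangential gradient $\nabla f-\frac{\nabla V}{V}f$. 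Regrouping all tangential quantities into their $V$-modified forms again amounts to completing the square on the boundary.

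The main obstacle, and the part that demands the most care, is the bookkeeping in the two integrations by parts and in the Ricci-identity commutation: one has to track the first and second derivatives of $V$ so that every mixed term cancels or combines exactly into the modified Hessian and into the coefficient $\overline\Delta V\,\overline g-\overline\nabla^2 V+V\overline{\mathrm{Ric}}$, with no stray lower-order remainder. The boundary decomposition is equally delicate, because commuting a normal derivative past a tangential gradient simultaneously generates the second fundamental form and extra tangential derivatives of $V$ that must be reassembled into the stated modified expressions; the hypothesis that $\frac{\overline\nabla^2 V}{V}$ extends continuously up to $\partial\Omega$ is exactly what legitimizes these boundary manipulations on the piecewise smooth boundary.
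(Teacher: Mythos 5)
The paper does not prove this theorem at all: it is imported verbatim from the cited reference \cite{LiXia2019substatic} and used as a black box, so there is no internal proof to compare your argument against. That said, your outline reproduces the standard derivation (which is essentially Li--Xia's own): expand the weighted squares, reduce the leading term $\int_\Omega V\bigl((\overline\Delta f)^2-|\overline\nabla^2 f|^2\bigr)$ via the Bochner identity and two integrations by parts with a Ricci commutation, complete the square to absorb the cross terms into the coefficient $\overline\Delta V\,\overline g-\overline\nabla^2 V+V\,\overline{\mathrm{Ric}}$, and split the boundary integrand into tangential and normal pieces using Gauss--Weingarten. The strategy is correct and would yield the stated identity.

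The caveat is that what you have written is a roadmap rather than a proof: the identity lives or dies on exact cancellations that you assert but do not verify. Two places deserve explicit computation. First, after multiplying the Bochner formula by $V$ and integrating, the term $-\int_\Omega \overline\nabla^2 f(\overline\nabla f,\overline\nabla V)\,d\Omega$ appears and must be integrated by parts again; simultaneously, expanding $V\bigl|\overline\nabla^2 f-\tfrac{\overline\nabla^2 V}{V}f\bigr|^2$ produces the cross term $-2f\langle\overline\nabla^2 V,\overline\nabla^2 f\rangle$, whose integration by parts generates third covariant derivatives of $V$; these must cancel against the analogous terms coming from $-2f\,\overline\Delta V\,\overline\Delta f$ via the contracted second Bianchi identity $\mathrm{div}(\overline\nabla^2 V)=\overline\nabla\overline\Delta V+\overline{\mathrm{Ric}}(\overline\nabla V,\cdot)$ --- this is precisely where the asymmetric-looking coefficient $\overline\Delta V\,\overline g-\overline\nabla^2 V+V\,\overline{\mathrm{Ric}}$ is born, and it is not obtained by symmetry considerations alone. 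Second, on a merely piecewise smooth boundary you cannot integrate by parts tangentially along $\partial\Omega$ without picking up corner contributions; the formula as stated deliberately keeps the two boundary terms $\int_{\partial\Omega}V\bigl(f_\nu-\tfrac{V_\nu}{V}f\bigr)\bigl(\Delta f-\tfrac{\Delta V}{V}f\bigr)$ and $-\int_{\partial\Omega}V\,g\bigl(\nabla(f_\nu-\tfrac{V_\nu}{V}f),\nabla f-\tfrac{\nabla V}{V}f\bigr)$ separate rather than combining them, and your write-up should make clear that no tangential integration by parts is performed. With those computations supplied, your argument is a complete and faithful proof of the quoted result.
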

	
	Let $\Omega$ be a domain in $\mathbb B_+^{n+1}$ enclosed by $\Sigma$ and $T$, where $T$ is a domain on $P$ enclosed by $\partial\Sigma$. Inspired by the idea in \cite{jia2023heintze}, we will give a proof of Theorem \ref{HKineq} now.
	\begin{proof}[Proof of Theorem \ref{HKineq}]
		For $\theta=\frac{\pi}{2}$, we refer to \cite{Pyo2019Rigidity}. Therefore we will deal with the case of $\theta\in(0,\frac{\pi}{2})$. We consider the following mixed boundary problem,
		\begin{align}\label{PDEforcapHK}
			\left\{\begin{array}{ll}
				\overline\Delta f-(n+1)f=1, & \mbox{ in }\Omega;\\
				f=0,  & \mbox{ on }\Sigma;\\
				f_{\overline N}=c_0,&\mbox{ on }T,
			\end{array}\right.
		\end{align}
		where $c_0=-\frac{n}{n+1}\cot\theta\frac{\int_TV_0dA_T}{\int_{\partial\Sigma}V_0ds}$. Since the idea about the existence and regularity is well established in \cite{jia2023heintze}, we will leave details of the following conclusion in the appendix: The solution $f$ of \eqref{PDEforcapHK} satisfies $f\in C^{\infty}(\overline\Omega\setminus\partial\Sigma)\cup C^2(\Omega\cup T)\cup C^{1,\alpha}(\overline\Omega)$ and $|\overline\nabla^2 f|_{\overline g}\in L^1(T)$. This makes $f$ applicable in the generalized Reilly's formula \eqref{reilly} and by letting $V=V_0$. Using \eqref{reilly}, we have 
		\begin{equation}\label{Reillyforcap}
			\begin{aligned}
				\frac{n}{n+1}\int_\Omega V_0d\Omega=&\frac{n}{n+1}\int_\Omega V_0\left(\overline\Delta f-\frac{\overline\Delta V_0}{V_0}f\right)^2d\Omega\\
				\geq&\int_\Omega V_0\left(\left(\overline\Delta f-\frac{\overline\Delta V_0}{V_0}f\right)^2-\left|\overline\nabla^2 f-\frac{\overline\nabla^2 V_0}{V_0}f\right|^2\right)d\Omega\\
				=&\int_\Sigma nV_0H_1f_\nu^2dA+c_0\int_T(V_0\Delta f-f\Delta V_0)dA_T\\
				=&\int_{\Sigma}nV_0H_1f_\nu^2dA+c_0\int_{\partial\Sigma}V_0g(\nabla^Tf,\overline\nu)ds.
			\end{aligned}
		\end{equation}
		
		In the second equality we use the fact that $\overline\nabla^2 V_0=V_0\overline g$, $\overline{\mbox{Ric}}=n\overline g$ and $(V_0)_{\overline N}=\overline g(\widetilde x,\overline N)=0$ on $T$, and in the third equality we apply the Green's formula (See \cite{PacellaTralli2020}).
		
		Furthermore, since $f\in C^1(\overline\Omega)$ and $f=0$ on $\Sigma$, we have $f_\mu=0$. From the relation \eqref{angle}, we have the following:
		\begin{align*}
			\overline\nu=&\cos\theta\mu+\sin\theta\nu\nonumber\\
			=&\cos\theta\mu+\sin\theta(-\cos\theta\overline N+\sin\theta\overline\nu),\nonumber\\
			=&\cos\theta\mu-\sin\theta\cos\theta\overline N+\sin^2\theta\overline\nu.
		\end{align*}
		Since $\overline\nu=\frac{1}{\cos\theta}\mu-\tan\theta\overline N$, we have
		\begin{equation*}
			\begin{aligned}
				c_0\int_{\partial\Sigma}V_0f_{\overline\nu}ds=&c_0\int_{\partial\Sigma}V_0\overline g(\overline\nabla f-f_{\overline N}\overline N,\overline\nu)ds\\
				=&c_0\int_{\partial\Sigma}V_0\overline g(\nabla^T f,\frac{1}{\cos\theta}\mu-\tan\theta\overline N)ds\\
				=&-c_0^2\tan\theta\int_{\partial\Sigma}V_0ds\\
				=&-\left(\frac{n}{n+1}\right)^2\cot\theta\frac{\left(\int_TV_0dA_T\right)^2}{\int_{\partial\Sigma}V_0ds}.
			\end{aligned}
		\end{equation*}
		Therefore, \eqref{Reillyforcap} is equivalent to the following inequality,
		\begin{equation}\label{Reillyforcap2}
			\begin{aligned}
				\frac{n}{n+1}\left(\int_\Omega V_0d\Omega+\frac{n}{n+1}\cot\theta\frac{\left(\int_TV_0dA_T\right)^2}{\int_{\partial\Sigma}V_0ds}\right)\geq\int_{\Sigma}nV_0H_1f_\nu^2dA.
			\end{aligned}
		\end{equation}
		On the other hand, by Green's formula to the equation above, we have 
		\begin{equation}\label{greenforHeintzecap}
			\begin{aligned}
				\int_\Omega V_0d\Omega=&\int_{\Omega}V_0\left(\overline\Delta f-\frac{\overline\Delta V_0}{V_0}f\right)d\Omega\nonumber\\
				=&\int_{\Sigma}(V_0f_\nu-f(V_0)_\nu)dA+\int_T(V_0f_{\overline N}-f(V_0)_{\overline N})dA_T\\
				=&\int_{\Sigma}V_0f_{\nu}dA+c_0\int_TV_0dA_T\\
				=&\int_{\Sigma}V_0f_\nu dA-\frac{n}{n+1}\cot\theta\frac{\left(\int_TV_0dA_T\right)^2}{\int_{\partial\Sigma}V_0ds}.
			\end{aligned}
		\end{equation}
		
		Applying H\"{o}lder's inequality, we obtain
		\begin{equation}\label{holderforHeintzecap}
			\begin{aligned}
				&\int_\Omega V_0d\Omega+\frac{n}{n+1}\cot\theta\frac{\left(\int_TV_0d\nu_T\right)^2}{\int_{\partial\Sigma}V_0ds}\\=&\int_\Sigma V_0f_\mu dA
				\leq\left(\int_\Sigma\frac{V_0}{nH_1}dA\right)^{\frac{1}{2}}\left(\int_\Sigma nV_0H_1f_\nu^2dA\right)^{\frac{1}{2}}.
			\end{aligned}
		\end{equation}
		Combining \eqref{Reillyforcap2} and \eqref{holderforHeintzecap}, we have
		\begin{align}\label{HKineq11}
			\int_\Sigma\frac{V_0}{H_1}dA\geq(n+1)\int_\Omega V_0d\Omega+n\cot\theta\frac{\left(\int_TV_0dA_T\right)^2}{\int_{\partial\Sigma}V_0ds}.
		\end{align}
		If the equality in \eqref{HKineq11} holds, all the inequalities above become equalities. In particular from \eqref{Reillyforcap} we have,
		\begin{align*}
			\overline\nabla^2f-f\overline g=\overline\nabla^2 f-\frac{\overline\nabla^2 V_0}{V_0}f=\frac{1}{n+1}\left(\overline\Delta f-\frac{\overline\Delta V_0}{V_0}f\right)\overline g=\frac{1}{n+1}\overline g,
		\end{align*}
		which is equivalent to
		\begin{align}\label{Hessianeqn}
			\overline\nabla^2\left(f+\frac{1}{n+1}\right)=\left(f+\frac{1}{n+1}\right)\overline g
		\end{align}
		Restricting \eqref{Hessianeqn} to $\Sigma$ where $f=0$, we can see that
		\begin{equation*}
			f_{\nu}h_{ij}=\frac{1}{n+1}\overline g_{ij}.
		\end{equation*}
		This implies that $\Sigma$ is totally umbilical.
	\end{proof}
	\begin{rem}\normalfont
		In the case of $\theta=\frac{\pi}{2}$, we note that the last term of the right hand side in \eqref{Reillyforcap} vanishes, therefore there is no need for the Green's formula used in the second equality in \eqref{Reillyforcap}, the theory in \cite{Lieberman1986Mixedboundary} is sufficient. See the Appendix for detail.
	\end{rem}
	Before giving a proof of Theorem \ref{alex}, we need the following identities.
	\begin{prop}On the embedded capillary hypersurface $\Sigma\subset\mathbb B^{n+1}_+$ supported on $P$, the following identities holds,
		\begin{align}\label{v0onT}
			\int_TV_0dA_T=\int_\Sigma\overline g(Y_{n+1},\nu)dA,
		\end{align}
		and
		\begin{align}\label{v0onparSig}
			\sin\theta\int_{\partial\Sigma}V_0ds=\int_\Sigma nH_1\overline g(Y_{n+1},\nu)dA. 
		\end{align}
	\end{prop}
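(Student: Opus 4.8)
The plan is to exploit that $Y_{n+1}$ is a Killing field for $\overline g$, as recorded in \eqref{confofY}. Equivalently this follows at once from Proposition \ref{deriP}(d): for $a=E_{n+1}$ we have $\overline\nabla_Z Y_{n+1}=\overline g(x,Z)\overline E_{n+1}-\overline g(Z,\overline E_{n+1})x$, so that $\overline g(\overline\nabla_Z Y_{n+1},Z)=\overline g(x,Z)\overline g(\overline E_{n+1},Z)-\overline g(Z,\overline E_{n+1})\overline g(x,Z)=0$ for every $Z$, and in particular $\divv Y_{n+1}=0$. The two identities then arise from applying the divergence theorem on two different domains, $\Omega$ for \eqref{v0onT} and $\Sigma$ itself for \eqref{v0onparSig}, and evaluating the boundary integrals using the fact that on $P$ the field simplifies: since $\delta(x,E_{n+1})=0$ there, one has $Y_{n+1}=\tfrac12(1+|x|^2)E_{n+1}=V_0\overline E_{n+1}$ on $P$.

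For \eqref{v0onT}, the divergence theorem on $\Omega$ and $\divv Y_{n+1}=0$ give $\int_\Sigma \overline g(Y_{n+1},\nu)\,dA+\int_T \overline g(Y_{n+1},\overline N)\,dA_T=0$, where $\overline N$ is the outward normal of $\Omega$ along $T$. On $P$ we have $Y_{n+1}=V_0\overline E_{n+1}$ and $\overline N=-\overline E_{n+1}$, so $\overline g(Y_{n+1},\overline N)=-V_0$, and substituting yields \eqref{v0onT}.

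For \eqref{v0onparSig}, decompose $Y_{n+1}=Y_{n+1}^T+\overline g(Y_{n+1},\nu)\nu$ along $\Sigma$. Using $\overline g(\nu,e_i)=0$, $\overline g(\overline\nabla_{e_i}\nu,e_i)=h_{ii}$, and the Killing relation $\sum_i \overline g(\overline\nabla_{e_i}Y_{n+1},e_i)=0$, one computes $\divv_\Sigma Y_{n+1}^T=-nH_1\,\overline g(Y_{n+1},\nu)$. Integrating over $\Sigma$ and applying the divergence theorem on $\Sigma$ gives $\int_{\partial\Sigma}\overline g(Y_{n+1},\mu)\,ds=-\int_\Sigma nH_1\,\overline g(Y_{n+1},\nu)\,dA$. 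On $\partial\Sigma\subset P$ we insert $\mu=\sin\theta\,\overline N+\cos\theta\,\overline\nu$ from \eqref{angle}; since $\overline\nu$ is tangent to $P$ it is $\overline g$-orthogonal to $\overline E_{n+1}$, whence $\overline g(Y_{n+1},\overline\nu)=0$, while $\overline g(Y_{n+1},\overline N)=-V_0$ as above, giving $\overline g(Y_{n+1},\mu)=-\sin\theta\,V_0$ and hence \eqref{v0onparSig}.

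The computations themselves are short. The points that demand care, and where a sign slip is easiest, are the orientation of the outward normal $\overline N$ along $T$ (which fixes the sign in the divergence theorem on $\Omega$) and the two pointwise evaluations $\overline g(Y_{n+1},\overline N)=-V_0$ and $\overline g(Y_{n+1},\overline\nu)=0$ on $P$; both rest entirely on the vanishing of $\delta(x,E_{n+1})$ on $P$, which collapses $Y_{n+1}$ to the scalar multiple $V_0\,\overline E_{n+1}$ there.
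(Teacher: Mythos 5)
Your proof is correct and follows the same route as the paper: both identities come from the Killing property of $Y_{n+1}$ (equation \eqref{confofY}), with the divergence theorem applied on $\Omega$ for \eqref{v0onT} and on $\Sigma$ for \eqref{v0onparSig}, and the boundary terms evaluated via $Y_{n+1}=V_0\overline E_{n+1}$ on $P$. Your explicit verifications of $\overline g(Y_{n+1},\overline N)=-V_0$ and $\overline g(Y_{n+1},\mu)=-\sin\theta\,V_0$ simply fill in steps the paper states without detail.
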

	\begin{proof}
		Since $Y_{n+1}$ is a Killing vector field, then $\overline{\divv}(Y_{n+1})=0$ on $\Omega$. Integrate on $\Omega$, we have 
		\begin{equation*}
			\begin{aligned}
				0=&\int_\Omega\overline{\divv}(Y_{n+1})d\Omega\\
				=&\int_\Sigma\overline g(Y_{n+1},\nu)dA+\int_T\overline g(Y_{n+1},\overline N)dA_T.
			\end{aligned}
		\end{equation*}
		Since $\overline g(Y_{n+1},\overline N)=-V_0$, we have
		\begin{align*}
			\int_TV_0dA_T=\int_\Sigma\overline g(Y_{n+1},\nu)dA.
		\end{align*}
		Restricting \eqref{confofY} to $\Sigma$ and using the divergence theorem, we have
		\begin{align*}
			-\int_\Sigma nH_1\overline g(Y_{n+1},\nu)dA=\int_{\Sigma}\divv_{\Sigma}(P_\Sigma Y_{n+1})dA=\int_{\partial\Sigma}\overline g(Y_{n+1},\mu)ds=-\sin\theta\int_{\partial\Sigma}V_0ds,
		\end{align*}
	where $P_\Sigma$ denotes the projection on $T\Sigma$.
	\end{proof}
	We are now ready to prove Theorem \ref{alex}.
	\begin{proof}[Proof of the Theorem \ref{alex}]
		Since $\Sigma$ is compact, there exists a $t\in (0,\pi)$ such that $\Sigma$ is contained in a domain enclosed by $P$ and consider 1-parameter family fo equidistant hypersurfaces $S_t$. 
		\begin{align*}
			S_{t}=\left\{x\in\mathbb H^{n+1}: |x+\tan t E_{n+1}|^2=\frac{1}{\cos^2t},\;t\in(0,\frac{\pi}{2})\right\}.
		\end{align*}
		When $t\rightarrow 0+$, $S_t$ tends to $\partial_\infty\mathbb H^{n+1}$. On the other hand, when $t\rightarrow\frac{\pi}{2}-$, $S_t$ tends to $P$. Given that $\partial\Sigma$ is compact, and $\partial S_{t}=\partial P\subset\partial_\infty\mathbb H^{n+1}$, therefore when $t\rightarrow\frac{\pi}{2}-$, $S_{t}$ would only touch the interior of $\Sigma$ other than its boundary. As $t\rightarrow\frac{\pi}{2}$, there must exist a $t_0$ such that $S_{t_0}$ touches $\mbox{int}(\Sigma)$ at first and tangent to $\Sigma $ at a point $p\in\mbox{int}(\Sigma$). Hence, we have $h(p)\geq\lambda_S(p)>0$. Then $H_1\equiv H_1(p)>0$. See the Figure \ref{existpointfig}.
		\begin{figure}[htbp]
			\centering
			\tikzset{every picture/.style={line width=0.75pt}} 
			
			\begin{tikzpicture}[x=0.6pt,y=0.6pt,yscale=-0.9,xscale=0.9]
				
				\draw   (200,198.15) .. controls (200,121.3) and (262.3,59) .. (339.15,59) .. controls (416,59) and (478.3,121.3) .. (478.3,198.15) .. controls (478.3,275) and (416,337.3) .. (339.15,337.3) .. controls (262.3,337.3) and (200,275) .. (200,198.15) -- cycle ;
				\draw [color={rgb, 255:red, 208; green, 2; blue, 27 }  ,draw opacity=1 ]   (200,198.15) -- (478.3,198.15) ;
				\draw [color={rgb, 255:red, 126; green, 211; blue, 33 }  ,draw opacity=1 ]   (260.3,197) .. controls (307.3,162) and (398.3,128) .. (425.3,199) ;
				\draw  [draw opacity=0] (201.69,195.81) .. controls (229.21,147.67) and (281.11,115.46) .. (340.31,116.02) .. controls (398.91,116.57) and (449.76,149.1) .. (476.59,196.96) -- (338.81,274.75) -- cycle ; \draw  [color={rgb, 255:red, 144; green, 19; blue, 254 }  ,draw opacity=1 ] (201.69,195.81) .. controls (229.21,147.67) and (281.11,115.46) .. (340.31,116.02) .. controls (398.91,116.57) and (449.76,149.1) .. (476.59,196.96) ;  
				\draw [color={rgb, 255:red, 144; green, 19; blue, 254 }  ,draw opacity=1 ]   (251,157) -- (251.28,189) ;
				\draw [shift={(251.3,191)}, rotate = 269.49] [color={rgb, 255:red, 144; green, 19; blue, 254 }  ,draw opacity=1 ][line width=0.75]    (10.93,-3.29) .. controls (6.95,-1.4) and (3.31,-0.3) .. (0,0) .. controls (3.31,0.3) and (6.95,1.4) .. (10.93,3.29)   ;
				
				\draw (358,135) node [anchor=north west][inner sep=0.75pt]  [color={rgb, 255:red, 126; green, 211; blue, 33 }  ,opacity=1 ] [align=left] {$\displaystyle \textcolor[rgb]{0.72,0.91,0.53}{\Sigma }$};
				\draw (293,96.4) node [anchor=north west][inner sep=0.75pt]  [color={rgb, 255:red, 144; green, 19; blue, 254 }  ,opacity=1 ]  {$S_{t}$};
				\draw (221,201) node [anchor=north west][inner sep=0.75pt]  [color={rgb, 255:red, 208; green, 2; blue, 27 }  ,opacity=1 ] [align=left] {$\displaystyle P$};

			\end{tikzpicture}
			\caption{Existence of the convex point on capillary hypersurface $\Sigma$ supported on $P$}
			\label{existpointfig}
		\end{figure}
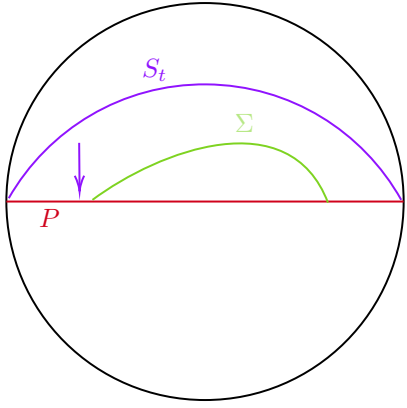
		
		Now applying \eqref{v0onT} and \eqref{v0onparSig} to the inequality \eqref{HKineq1}, we can see that \eqref{HKineq1} is equivalent to the following:
		\begin{align}\label{HKineq2}
			\int_\Sigma\frac{V_0}{H_1}dA\geq(n+1)\int_\Omega V_0d\Omega+\cos\theta\frac{\left(\int_\Sigma\overline g(Y_{n+1},\nu)dA\right)^2}{\int_{\Sigma}H_1\overline g(Y_{n+1},\nu)dA}.
		\end{align}
		Since $H_1$ is constant, \eqref{HKineq2} can be written as
		\begin{equation}\label{HKineq3}
			\begin{aligned}
				\int_\Sigma\frac{V_0-\cos\theta\overline g (Y_{n+1},\nu)}{H_1}dA\geq&(n+1)\int_{\Omega}V_0d\Omega\\
				=&\int_\Omega\overline{\divv}(x)d\Omega\\
				=&\int_\Sigma\overline g(x,\nu)dA.
			\end{aligned}
		\end{equation}
		The last equality holds because $\overline g(x,\overline N)=0$ on $T$. Since $H_1$ is constant, from Proposition \ref{propP} ($k=1$) we can see that the equality in \eqref{HKineq3} holds. According to the condition in Theorem \ref{HKineq1}, we obtain that $\Sigma$ is umbilical, but it is not totally geodesic since $H_1>0$.
	\end{proof}
	\begin{rem}\normalfont
		We consider the equality case of Theorem \ref{HKineq}.
		From \cite[Remark 4.2]{WangXia2019Stablecapillary}, we know that any function in $\{V\in C^\infty(\mathbb H^{n+1}):\overline\nabla^2 V=V\overline g\}$ can be represented by the linear combination of the $n+2$ functions $\{V_0, V_1, \dots, V_{n+1}\}$, where $V_i=\frac{2\langle x,E_i\rangle}{1-|x|^2}$. From \eqref{Hessianeqn} we can see that there exists constants $B,\,c_i\in\mathbb R$ and a constant vector (in the Euclidean sense) $a\in\mathbb R^{n+1}$ such that 
		\begin{align*}
			f+\frac{1}{n+1}=&BV_0+\sum\limits_{i=1}^{n+1}c_iV_i\nonumber\\
			=&B\frac{1+|x|^2}{1-|x|^2}+\frac{2\langle x,C\rangle}{1-|x|^2}\nonumber\\
			=&\frac{B(1+|x|^2)+2\langle x,C\rangle}{1-|x|^2},
		\end{align*}
	where $C=\sum\limits_{i=1}^{n+1}c_iE_i$.
		\begin{enumerate}
			\item When $B\neq-\frac{1}{n+1}$, $\Sigma$ lies in a hypersurface defined by 
			\begin{align*}
				\left\{x\in\mathbb H^{n+1}:\left|x+\frac{(n+1)}{1+(n+1)B}C\right|^2=\frac{1-(n+1)B}{1+(n+1)B}+\left(\frac{(n+1)|C|}{1+(n+1)B}\right)^2\right\}.
			\end{align*}
			\item When $B=-\frac{1}{n+1}$ and $C\neq 0$, $\Sigma$ lies in a hypersurface defined by 
			\begin{align*}
				\left\{x\in\mathbb H^{n+1}:\langle x,a\rangle=1/|C|\right\}.
			\end{align*}
		\end{enumerate}

		Therefore, $\Sigma$ is a totally umbilical hypersurface in $\mathbb H^{n+1}$ and can be a geodesic ball, a horosphere or an equidistant hypersurfaces, and $B,\,C\in\mathbb R, a\in\mathbb R^{n+1}$ can be chosen such that $\Sigma$ is a part of one of the illustrated above and compact (compactness excludes the totally geodesic case). For example, a horosphere can be seen Figure \ref{rigidityhorosphere}.
		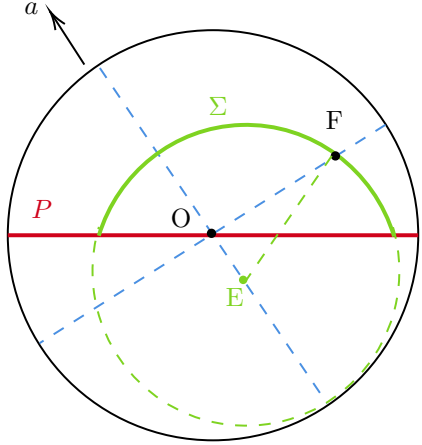
\begin{figure}[ht]
			\centering

			\tikzset{every picture/.style={line width=0.75pt}} 
			
			\begin{tikzpicture}[x=0.70pt,y=0.70pt,yscale=-0.93,xscale=0.93]
				
				\draw   (203,154.25) .. controls (203,88.39) and (256.39,35) .. (322.25,35) .. controls (388.11,35) and (441.5,88.39) .. (441.5,154.25) .. controls (441.5,220.11) and (388.11,273.5) .. (322.25,273.5) .. controls (256.39,273.5) and (203,220.11) .. (203,154.25) -- cycle ;
				\draw [color={rgb, 255:red, 208; green, 2; blue, 27 }  ,draw opacity=1 ][line width=1.5]    (203,154.25) -- (441.5,154.25) ;
				\draw [color={rgb, 255:red, 74; green, 144; blue, 226 }  ,draw opacity=1 ] [dash pattern={on 4.5pt off 4.5pt}]  (422.5,90) -- (221.5,217) ;
				\draw [color={rgb, 255:red, 74; green, 144; blue, 226 }  ,draw opacity=1 ] [dash pattern={on 4.5pt off 4.5pt}]  (256.5,57) -- (388.5,252) ;
				\draw  [draw opacity=0][line width=1.5]  (255.9,154.53) .. controls (266.95,118.27) and (299.4,91.44) .. (338.58,90.12) .. controls (379.97,88.74) and (415.79,116.27) .. (427.47,154.96) -- (341.66,182.07) -- cycle ; \draw  [color={rgb, 255:red, 126; green, 211; blue, 33 }  ,draw opacity=1 ][line width=1.5]  (255.9,154.53) .. controls (266.95,118.27) and (299.4,91.44) .. (338.58,90.12) .. controls (379.97,88.74) and (415.79,116.27) .. (427.47,154.96) ;  
				\draw  [draw opacity=0][dash pattern={on 4.5pt off 4.5pt}] (426.83,149.56) .. controls (429.09,157.25) and (430.37,165.38) .. (430.5,173.79) .. controls (431.26,223.38) and (391.98,264.19) .. (342.76,264.95) .. controls (293.54,265.7) and (253.02,226.12) .. (252.26,176.53) .. controls (252.11,167.1) and (253.41,157.99) .. (255.96,149.41) -- (341.38,175.16) -- cycle ; \draw  [color={rgb, 255:red, 126; green, 211; blue, 33 }  ,draw opacity=1 ][dash pattern={on 4.5pt off 4.5pt}] (426.83,149.56) .. controls (429.09,157.25) and (430.37,165.38) .. (430.5,173.79) .. controls (431.26,223.38) and (391.98,264.19) .. (342.76,264.95) .. controls (293.54,265.7) and (253.02,226.12) .. (252.26,176.53) .. controls (252.11,167.1) and (253.41,157.99) .. (255.96,149.41) ;  
				\draw    (247.5,55) -- (228.58,25.68) ;
				\draw [shift={(227.5,24)}, rotate = 57.17] [color={rgb, 255:red, 0; green, 0; blue, 0 }  ][line width=0.75]    (10.93,-3.29) .. controls (6.95,-1.4) and (3.31,-0.3) .. (0,0) .. controls (3.31,0.3) and (6.95,1.4) .. (10.93,3.29)   ;
				\draw  [color={rgb, 255:red, 126; green, 211; blue, 33 }  ,draw opacity=1 ][fill={rgb, 255:red, 126; green, 211; blue, 33 }  ,fill opacity=1 ] (337.91,180.19) .. controls (337.91,179.16) and (338.75,178.32) .. (339.79,178.32) .. controls (340.82,178.32) and (341.66,179.16) .. (341.66,180.19) .. controls (341.66,181.23) and (340.82,182.07) .. (339.79,182.07) .. controls (338.75,182.07) and (337.91,181.23) .. (337.91,180.19) -- cycle ;
				\draw  [fill={rgb, 255:red, 0; green, 0; blue, 0 }  ,fill opacity=1 ] (319.25,153.13) .. controls (319.25,151.95) and (320.2,151) .. (321.38,151) .. controls (322.55,151) and (323.5,151.95) .. (323.5,153.13) .. controls (323.5,154.3) and (322.55,155.25) .. (321.38,155.25) .. controls (320.2,155.25) and (319.25,154.3) .. (319.25,153.13) -- cycle ;
				\draw  [fill={rgb, 255:red, 0; green, 0; blue, 0 }  ,fill opacity=1 ] (391.25,108.13) .. controls (391.25,106.95) and (392.2,106) .. (393.38,106) .. controls (394.55,106) and (395.5,106.95) .. (395.5,108.13) .. controls (395.5,109.3) and (394.55,110.25) .. (393.38,110.25) .. controls (392.2,110.25) and (391.25,109.3) .. (391.25,108.13) -- cycle ;
				\draw [color={rgb, 255:red, 126; green, 211; blue, 33 }  ,draw opacity=1 ] [dash pattern={on 4.5pt off 4.5pt}]  (391.25,108.13) -- (341.66,180.19) ;
				
				\draw (211,17) node [anchor=north west][inner sep=0.75pt]   [align=left] {$\displaystyle a$};
				\draw (328,183) node [anchor=north west][inner sep=0.75pt]  [color={rgb, 255:red, 126; green, 211; blue, 33 }  ,opacity=1 ] [align=left] {E};
				\draw (296,137) node [anchor=north west][inner sep=0.75pt]   [align=left] {O};
				\draw (386,80) node [anchor=north west][inner sep=0.75pt]   [align=left] {F};
				\draw (215,132) node [anchor=north west][inner sep=0.75pt]  [color={rgb, 255:red, 208; green, 2; blue, 27 }  ,opacity=1 ] [align=left] {$\displaystyle P$};
				\draw (318,72) node [anchor=north west][inner sep=0.75pt]  [color={rgb, 255:red, 126; green, 211; blue, 33 }  ,opacity=1 ] [align=left] {$\displaystyle \Sigma $};

			\end{tikzpicture}
			\caption{Compact capillary horosphere}
			\label{rigidityhorosphere}
		\end{figure}
		
	\end{rem}

	From the Figure \ref{rigidityhorosphere}, we can see
	\begin{align*}
		|OF|=\sqrt{\left|\frac{1-(n+1)B}{1+(n+1)B}\right|},\quad |OE|=\left|\frac{(n+1)}{1+(n+1)B}C\right|.
	\end{align*}
	\section{Proofs of Theorem \ref{hkequi} and Theorem \ref{alexint}}\label{sectionequi}
	In this section, we consider the case when the supporting hypersurface is either an equidistant hypersurface or a horospheres. Before we give a proof of Theorem \ref{hkequi}, we need the following lemma.
	\begin{prop}On the embedded capillary hypersurfaces $\Sigma\hookrightarrow\mathbb R^{n+1}_+$ supported on $L_{\phi,a}$ in the Poincar\'{e} half space model, the following identities holds
		\begin{equation}\label{xST}
			\int_{\Sigma}\overline g(\widetilde x,\nu)dA=\cos\phi\int_TV_{n+1}dA_T;
		\end{equation}
		\begin{equation}\label{aST}
			\int_\Sigma\overline g(a,\nu)dA=-\sin\phi\int_TV_{n+1}dA_T;
		\end{equation}
		\begin{equation}\label{xSB}
			\int_\Sigma(-nH_1\overline g(\widetilde x,\nu))dA=\int_{\partial\Sigma}\overline g(\widetilde x,\mu)ds;
		\end{equation}
		\begin{equation}\label{aSB}
			\int_\Sigma(-nH_1\overline g(a,\nu))dA=\int_{\partial\Sigma}\overline g(a,\mu)ds;
		\end{equation}
	and
		\begin{equation}\label{equiintxnu}
			\int_\Sigma n\cos\theta\overline g(\widetilde x,\nu)dA=\int_{\partial\Sigma}\cos\phi\overline g(X_{n+1},\mu)ds.
		\end{equation}
	\end{prop}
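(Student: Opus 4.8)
The plan is to derive every identity from a divergence theorem, exploiting that $\widetilde x$ and $a$ are Killing vector fields (see \eqref{killinga} and \eqref{killingx}) and that $X_{n+1}$ is conformal Killing (see \eqref{confXn+1}). For \eqref{xST} and \eqref{aST} I would apply the divergence theorem on the enclosed region $\Omega$, whose boundary is $\Sigma\cup T$ with outward normals $\nu$ on $\Sigma$ and $\overline N$ on $T$. Since a Killing field is divergence free, $\overline{\divv}(\widetilde x)=\overline{\divv}(a)=0$, so integration gives $\int_\Sigma\overline g(\widetilde x,\nu)\,dA=-\int_T\overline g(\widetilde x,\overline N)\,dA_T$ and likewise for $a$. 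It then remains to evaluate the integrands on $T\subset L_{\phi,a}$: using the expression \eqref{normalNequi} for $\overline N$, the conformal relation $\overline g=\frac{1}{x_{n+1}^2}\delta$, and the elementary facts $\delta(\widetilde x,E_{n+1})=x_{n+1}$, $\delta(\widetilde x,a)=x_a$, $\delta(a,E_{n+1})=0$, $\delta(a,a)=1$, a direct computation combined with the defining relation $x_{n+1}-1=\tan\phi\,x_a$ from \eqref{defiequiL} yields $\overline g(\widetilde x,\overline N)=-\cos\phi\,V_{n+1}$ and $\overline g(a,\overline N)=\sin\phi\,V_{n+1}$, giving \eqref{xST} and \eqref{aST}.

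For \eqref{xSB} and \eqref{aSB} the key is the tangential decomposition of the ambient covariant derivative of an arbitrary ambient field $W$ along $\Sigma$. Writing $W=P_\Sigma W+\overline g(W,\nu)\nu$ and applying the Gauss formula together with $\sum_i h(e_i,e_i)=nH_1$, one obtains $\sum_i\overline g(\overline\nabla_{e_i}W,e_i)=\divv_\Sigma(P_\Sigma W)+nH_1\overline g(W,\nu)$. When $W$ is Killing each diagonal term $\overline g(\overline\nabla_{e_i}W,e_i)$ vanishes, so $\divv_\Sigma(P_\Sigma W)=-nH_1\overline g(W,\nu)$; integrating over $\Sigma$ and using the divergence theorem, with $\overline g(P_\Sigma W,\mu)=\overline g(W,\mu)$ since $\mu\in T\Sigma$, gives $\int_\Sigma(-nH_1\overline g(W,\nu))\,dA=\int_{\partial\Sigma}\overline g(W,\mu)\,ds$. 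Taking $W=\widetilde x$ and $W=a$ then produces \eqref{xSB} and \eqref{aSB}.

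For the final identity \eqref{equiintxnu} I would reuse the tangential field $Z_{n+1}=\overline g(\widetilde x,\nu)\overline E_{n+1}-\overline g(\overline E_{n+1},\nu)\widetilde x$ already introduced in the proof of Proposition \ref{minpropequiint}. Since $\overline g(Z_{n+1},\nu)=0$, tracing \eqref{divz} gives $\divv_\Sigma(Z_{n+1})=-n\overline g(\widetilde x,\nu)$, so the divergence theorem yields $\int_{\partial\Sigma}\overline g(Z_{n+1},\mu)\,ds=-n\int_\Sigma\overline g(\widetilde x,\nu)\,dA$. The boundary computation already carried out in \eqref{minkequi2} (specialized to $k=1$, where $S_{0;\mu}=1$) shows pointwise on $\partial\Sigma$ that $\overline g(Z_{n+1},\mu)=-\cos\phi\,\overline g(X_{n+1},\overline\nu)$. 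Finally, because $\overline g(X_{n+1},\overline N)=0$ by \eqref{NnormalX}, the angle relation \eqref{angle} gives $\overline g(X_{n+1},\mu)=\cos\theta\,\overline g(X_{n+1},\overline\nu)$, so that $\overline g(X_{n+1},\overline\nu)=\tfrac{1}{\cos\theta}\overline g(X_{n+1},\mu)$; combining these three facts produces \eqref{equiintxnu}.

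Since all five identities share the same divergence-theorem skeleton, the first four are essentially routine. I expect the main obstacle to lie in the boundary computations, specifically in obtaining the correct signs and trigonometric factors in $\overline g(\widetilde x,\overline N)=-\cos\phi\,V_{n+1}$, $\overline g(a,\overline N)=\sin\phi\,V_{n+1}$, and in the $\overline\nu$-to-$\mu$ conversion for \eqref{equiintxnu}, where the defining equation of $L_{\phi,a}$ and the contact-angle relations \eqref{angle} must be applied with care.
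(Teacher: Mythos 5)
Your proposal is correct and follows essentially the same route as the paper: divergence of the Killing fields $\widetilde x$ and $a$ on $\Omega$ for \eqref{xST} and \eqref{aST}, the tangential restriction of the Killing equation on $\Sigma$ for \eqref{xSB} and \eqref{aSB}, and the $k=1$ case of the computation in the proof of Proposition \ref{minpropequiint} for \eqref{equiintxnu}. The only difference is that you spell out explicitly the $Z_{n+1}$/\eqref{minkequi2} argument and the $\overline\nu$-to-$\mu$ conversion for the last identity, which the paper merely states "can be noted from the proof of Proposition \ref{minpropequiint}".
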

	\begin{proof}
		To prove \eqref{xST} and \eqref{xSB}, we know from \eqref{killingx} that $\overline{\mrm{div}}\widetilde x=0$. Let $\Omega$ be a domain enclosed by $\Sigma$ and $T$. Using the divergence Theorem, we have
		\begin{equation*}
			\begin{aligned}
				0=&\int_\Omega\overline{\mrm{div}}\widetilde xd\Omega\\
				=&\int_\Sigma\overline g(\widetilde x,\nu)dA+\int_T\overline g(\overline N,\widetilde x)dA_T\\
				=&\int_\Sigma\overline g(\widetilde x,\nu)dA+\int_T\frac{1}{x_{n+1}}(-\cos\phi x_{n+1}+\sin\phi x_a)dA_T\\
				=&\int_\Sigma\overline g(\widetilde x,\nu)dA-\int_T\cos\phi V_{n+1}dA_T.\\
			\end{aligned}
		\end{equation*}
		In the last equality, we use \eqref{defiequiL}. 
		\par On the other hand, restricting \eqref{killingx} on $\Sigma$, we have
		\begin{equation*}
			\begin{aligned}
				\int_\Sigma\left(-nH_1\overline g(\widetilde x,\nu)\right)dA=\int_\Sigma\mrm{div}_\Sigma (P_\Sigma\widetilde x)dA
				=\int_{\partial\Sigma}\overline g(\widetilde x,\mu)ds,
			\end{aligned}
		\end{equation*}
		where $P_\Sigma$ denotes the projection on $T\Sigma$. Then we obtain \eqref{xST} and \eqref{xSB}. Using \eqref{killinga}, we can prove \eqref{aSB} and \eqref{aST}. Moreover, \eqref{equiintxnu} can be noted from in the proof of Proposition \ref{minpropequiint}.  
	\end{proof}
	\begin{proof}[Proof of Theorem \ref{hkequi}]
		Consider the following mixed boundary problem in the Poincar\'{e} half space model,
		\begin{equation}\label{Pdehoro}
			\left\{\begin{array}{ll}
				\overline\Delta f-(n+1)f=1,&\mbox{ in }\Omega;\\
				f=0&\mbox{ on }\Sigma;\\
				f_{\overline N}-\cos\phi f=c_0&\mbox{ on }T,
			\end{array}\right.
		\end{equation}
		where $$c_0=-\frac{n}{n+1}\cos\theta\frac{\int_T V_{n+1}dA_T}{\int_{\partial\Sigma}\overline g(-\cos\phi \widetilde x+\sin\phi a,\mu)ds}.$$
		According to the Lieberman's theory (see the appendix), there exists a solution $f$ such that $f\in C^{\infty}(\overline\Omega\setminus\partial\Sigma)\cup C^2(\Omega\cup T)\cup C^{1,\alpha}(\overline\Omega)$ and $|\overline\nabla^2 f|_{\overline g}\in L^1(T)$. Therefore, for the same reason as the proof of Theorem \ref{HKineq}, we can insert the solution $f$ to the generalized Reilly's type formula \eqref{reilly}.
		\begin{equation*}
			\begin{aligned}
				\frac{n}{n+1}\int_\Omega V_{n+1}d\Omega
				=&\frac{n}{n+1}\int_\Omega V_{n+1}\left(\overline\Delta f-\frac{\overline\Delta V_{n+1}}{V_{n+1}}f\right)^2d\Omega\\
				\geq&\int_\Omega V_{n+1}\left(\left(\overline\Delta f-\frac{\overline\Delta V_{n+1}}{V_{n+1}}f\right)^2-\left|\overline\nabla^2f-\frac{\overline\nabla ^2V_{n+1}}{V_{n+1}}f\right|^2\right)d\Omega\\
				=&\int_\Sigma nV_{n+1}H_1f_\nu^2dA+c_0\int_T(V_{n+1}\Delta f-f\Delta V_{n+1})dA_T\\
				&+nc_0^2\cos\phi\int_T V_{n+1}dA_T\\
				=&\int_\Sigma nV_{n+1}H_1f^2_\nu dA+c_0\int_{\partial\Sigma}V_{n+1}\overline g(\nabla^T f,\overline\nu)ds+nc_0^2\int_\Sigma\overline g(\widetilde x,\nu)dA\\
				=&\int_\Sigma nV_{n+1}H_1f^2_\nu dA+c_0\int_{\partial\Sigma}V_{n+1}\overline g(\nabla^T f,\overline\nu)ds\\&+\frac{c_0^2\cos\phi}{\cos\theta}\int_{\partial\Sigma}\overline g(X_{n+1},\mu)ds,		
			\end{aligned}
		\end{equation*}
		where we use the fact that $(V_{n+1})_{\overline N}=-\frac{1}{x_{n+1}^2}\langle E_{n+1},\overline N\rangle=\cos\phi V_{n+1}$ and  \eqref{equiintxnu} in the last equality. On $\partial\Sigma$, using \eqref{angle} and the boundary condition in \eqref{Pdehoro}, we have 
		\begin{equation*}
			\begin{aligned}
				\overline g(\nabla^T f,\overline\nu)=&\overline g(\overline\nabla f-\overline g(\overline\nabla f,\overline N)\overline N,\frac{1}{\cos\theta}\mu)\\
				=&-\frac{c_0}{\cos\theta}\overline g(\overline N,\mu).
			\end{aligned}
		\end{equation*}
		Then we have
		\begin{equation*}
			\begin{aligned}
				\frac{n}{n+1}\int_\Omega V_{n+1}d\Omega\geq&\int_\Sigma nV_{n+1}H
				_1f_{\nu}^2dA-\frac{c_0^2}{\cos\theta}\int_{\partial\Sigma}V_{n+1}\overline g(\overline N,\mu)ds\\
				&+\frac{c_0^2\cos\phi}{\cos\theta}\int_{\partial\Sigma}\overline g(X_{n+1},\mu)ds\\
				=&\int_\Sigma nV_{n+1}H
				_1f_{\nu}^2dA-\frac{c_0^2}{\cos\theta}\int_{\partial\Sigma}\overline g(-\cos\phi \widetilde x+\sin\phi a,\mu)ds\\
				=&\int_\Sigma nV_{n+1}H_1f_{\nu}^2dA-\left(\frac{n}{n+1}\right)^2\cos\theta\frac{\left(\int_TV_{n+1}dA_T\right)^2}{\int_{\partial\Sigma}\overline g(-\cos\phi \widetilde x+\sin\theta a,\mu)ds}.
			\end{aligned}
		\end{equation*}
		Therefore, 
		\begin{equation}\label{heintzeequistep1}
			\begin{aligned}
				&\frac{n}{n+1}\left(\int_\Omega V_{n+1}d\Omega+\frac{n}{n+1}\cos\theta\frac{\left(\int_TV_{n+1}dA_T\right)^2}{\int_{\partial\Sigma}\overline g(-\cos\phi \widetilde x+\sin\theta a,\mu)ds}\right)\\\geq&\int_\Sigma nV_{n+1}H_1f_{\nu}^2dA.
			\end{aligned}
		\end{equation}
		On the other hand, using the divergence theorem we have 
		\begin{equation}\label{heintzeequistep2}
			\begin{aligned}
				\int_\Omega V_{n+1}d\Omega=&\int_{\Omega}V_{n+1}\left(\overline\Delta f-\frac{\overline\Delta V_{n+1}}{V_{n+1}}f\right)d\Omega\\
				=&\int_{\Sigma}(V_{n+1}f_\nu-f(V_{n+1})_\nu)dA+\int_T(V_{n+1}f_{\overline N}-f(V_{n+1})_{\overline N})dA_T\\
				=&\int_{\Sigma}V_{n+1}f_{\nu}dA+c_0\int_TV_{n+1}dA_T\\
				=&\int_{\Sigma}V_{n+1}f_\nu dA-\frac{n}{n+1}\cos\theta\frac{\left(\int_TV_{n+1}dA_T\right)^2}{\int_{\partial\Sigma}\overline g(-\cos\phi \widetilde x+\sin\phi a,\mu)ds}.
			\end{aligned}
		\end{equation}
		Combining \eqref{heintzeequistep1} and \eqref{heintzeequistep2} and using the similar argument as the proof of Theorem \ref{HKineq}, we prove the inequality in Theorem \ref{hkequi}. The equality case is also the same as Theorem \ref{HKineq}.  
	\end{proof}
	At this point, we are ready to prove the Alexandrov type theorem for the capillary hypersurface in $B^{\mrm{int}}_{\phi,a}$.
	\begin{proof}[Proof of the Theorem \ref{alexint}]In order to prove the existence of convex points on $\Sigma$, we consider both the case that $\phi>0$ and the case that $\phi=0$ independently.\\
		\textbf{Case 1 ($\phi>0$):} Now $L_{\phi,a}$ is an equidistant hypersurface. Let $\eta\in(0,\phi)$, which will be determined later. We consider a set 
		$$P_{\eta}=\{\widetilde x\in\mathbb H^{n+1}:x_{n+1}=\tan\eta (x_a+1)\}.$$
		Let $p\in P_\eta$ and $S_p$ be a family of equidistant hypersurface defined as
		$$S_{p}=\{\widetilde x\in\mathbb H^{n+1}:|\widetilde x-p|=|p-\mrm{dist}_{\mathbb R}(p,\partial P_{\eta})|\mbox{ and }p\in P_\eta\},$$
		where $\mrm{dist}_{\mathbb R}$ is the Euclidean distance in the Poincar\'{e} half space model. Since $\Sigma$ is a compact capillary hypersurface supported on $L_{\phi,a}$, we can find a proper $p_0\in P_\eta$ sufficiently far (in the Euclidean sense) from $\partial P_\eta$, such that $\Sigma$ is contained in the domain enclosed by $S_{p_0}$ and $L_{\phi,a}$ and $\eta>0$ guarantees that $\Sigma$ stays in the interior side of $S_{p_0}$. Then for any $p\in P$ the contact angle $\theta_0$ between $L_{\phi,a}$ and $S_{p_0}$ is constant satisfying $\theta_0=\frac{\pi}{2}-\phi+\eta$.
		\par Since $\theta+\phi>\frac{\pi}{2}$, there exists an $\eta\in(0,\phi)$ such that $\theta>\theta_0=\frac{\pi}{2}-\phi+\eta$.
		\par Now moving $p_0$ toward $\partial P_\eta$ such that $\mrm{dist}_{\mathbb R}(p_0,\partial P_\eta)$ goes to $0$, we can see that there exists $p_1$ such that $S_{p_1}$ firstly touches $\Sigma$ at some $q\in\mrm{int}(\Sigma)$ since $\theta>\theta_0$. Meanwhile, since $\eta>0$, $\Sigma$ remains within the interior side of $S_{p_1}$. See Figure \ref{exists1}.
		\begin{figure}[ht]
			\centering

			\tikzset{every picture/.style={line width=0.75pt}} 
			
			\begin{tikzpicture}[x=0.75pt,y=0.75pt,yscale=-1,xscale=1]
				
				\draw  (147.3,217) -- (450.3,217)(298.3,31) -- (298.3,218) (443.3,212) -- (450.3,217) -- (443.3,222) (293.3,38) -- (298.3,31) -- (303.3,38)  ;
				\draw [color={rgb, 255:red, 208; green, 2; blue, 27 }  ,draw opacity=1 ]   (471.3,98) -- (236.3,216) ;
				\draw [color={rgb, 255:red, 208; green, 2; blue, 27 }  ,draw opacity=1 ]   (254.3,207) .. controls (259.3,206) and (263.3,213) .. (259.3,217) ;
				\draw [color={rgb, 255:red, 74; green, 144; blue, 226 }  ,draw opacity=1 ] [dash pattern={on 4.5pt off 4.5pt}]  (493.3,124) -- (236.3,216) ;
				\draw  [draw opacity=0] (236.84,216.42) .. controls (233.09,206.72) and (231.01,196.21) .. (230.93,185.22) .. controls (230.57,135.58) and (271.24,95.04) .. (321.79,94.67) .. controls (353.83,94.44) and (382.14,110.4) .. (398.67,134.77) -- (322.44,184.55) -- cycle ; \draw  [color={rgb, 255:red, 189; green, 16; blue, 224 }  ,draw opacity=1 ] (236.84,216.42) .. controls (233.09,206.72) and (231.01,196.21) .. (230.93,185.22) .. controls (230.57,135.58) and (271.24,95.04) .. (321.79,94.67) .. controls (353.83,94.44) and (382.14,110.4) .. (398.67,134.77) ;  
				\draw  [draw opacity=0][dash pattern={on 4.5pt off 4.5pt}] (400.08,137.37) .. controls (408.4,150.88) and (413.26,166.81) .. (413.38,183.91) .. controls (413.47,195.86) and (411.24,207.28) .. (407.11,217.73) -- (325.44,184.55) -- cycle ; \draw  [color={rgb, 255:red, 189; green, 16; blue, 224 }  ,draw opacity=1 ][dash pattern={on 4.5pt off 4.5pt}] (400.08,137.37) .. controls (408.4,150.88) and (413.26,166.81) .. (413.38,183.91) .. controls (413.47,195.86) and (411.24,207.28) .. (407.11,217.73) ;  
				\draw [color={rgb, 255:red, 126; green, 211; blue, 33 }  ,draw opacity=1 ]   (269.3,200) .. controls (281.3,143) and (337.3,118) .. (372.3,147) ;
				\draw [color={rgb, 255:red, 189; green, 16; blue, 224 }  ,draw opacity=1 ]   (341.3,104) -- (318.2,111.39) ;
				\draw [shift={(316.3,112)}, rotate = 342.26] [color={rgb, 255:red, 189; green, 16; blue, 224 }  ,draw opacity=1 ][line width=0.75]    (10.93,-3.29) .. controls (6.95,-1.4) and (3.31,-0.3) .. (0,0) .. controls (3.31,0.3) and (6.95,1.4) .. (10.93,3.29)   ;
				\draw [color={rgb, 255:red, 74; green, 144; blue, 226 }  ,draw opacity=1 ]   (276.3,201) .. controls (282.3,205) and (282.3,212) .. (277.3,216) ;
				\draw [color={rgb, 255:red, 208; green, 2; blue, 27 }  ,draw opacity=1 ]   (236.3,216) -- (255.42,255.2) ;
				\draw [shift={(256.3,257)}, rotate = 244] [color={rgb, 255:red, 208; green, 2; blue, 27 }  ,draw opacity=1 ][line width=0.75]    (10.93,-3.29) .. controls (6.95,-1.4) and (3.31,-0.3) .. (0,0) .. controls (3.31,0.3) and (6.95,1.4) .. (10.93,3.29)   ;
				\draw  [color={rgb, 255:red, 74; green, 144; blue, 226 }  ,draw opacity=1 ][fill={rgb, 255:red, 74; green, 144; blue, 226 }  ,fill opacity=1 ] (322.44,184.55) .. controls (322.44,183.72) and (323.12,183.05) .. (323.94,183.05) .. controls (324.77,183.05) and (325.44,183.72) .. (325.44,184.55) .. controls (325.44,185.38) and (324.77,186.05) .. (323.94,186.05) .. controls (323.12,186.05) and (322.44,185.38) .. (322.44,184.55) -- cycle ;
				\draw [color={rgb, 255:red, 189; green, 16; blue, 224 }  ,draw opacity=1 ][line width=1.5]    (269.3,205) .. controls (272.3,221) and (265.3,235) .. (249,243) ;
				
				\draw (166,143) node [anchor=north west][inner sep=0.75pt]   [align=left] {$ $};
				\draw (306,28.4) node [anchor=north west][inner sep=0.75pt]    {$x_{n+1}$};
				\draw (456,203.4) node [anchor=north west][inner sep=0.75pt]    {$x_{a}$};
				\draw (240,188.4) node [anchor=north west][inner sep=0.75pt]  [color={rgb, 255:red, 208; green, 2; blue, 27 }  ,opacity=1 ]  {$\phi $};
				\draw (211,66.4) node [anchor=north west][inner sep=0.75pt]  [color={rgb, 255:red, 208; green, 2; blue, 27 }  ,opacity=1 ]  {$B^{int}$};
				\draw (324.44,111.08) node [anchor=north west][inner sep=0.75pt]  [color={rgb, 255:red, 126; green, 211; blue, 33 }  ,opacity=1 ] [align=left] {$\displaystyle \textcolor[rgb]{0.72,0.91,0.53}{\Sigma }$};
				\draw (283,201.4) node [anchor=north west][inner sep=0.75pt]  [color={rgb, 255:red, 74; green, 144; blue, 226 }  ,opacity=1 ]  {$\eta $};
				\draw (231,246.4) node [anchor=north west][inner sep=0.75pt]  [color={rgb, 255:red, 208; green, 2; blue, 27 }  ,opacity=1 ]  {$\overline{N}$};
				\draw (260,234.4) node [anchor=north west][inner sep=0.75pt]  [color={rgb, 255:red, 189; green, 16; blue, 224 }  ,opacity=1 ]  {$\theta _{0}$};
				\draw (501,120.4) node [anchor=north west][inner sep=0.75pt]  [color={rgb, 255:red, 74; green, 144; blue, 226 }  ,opacity=1 ]  {$P_{\eta }$};
				\draw (467,78.4) node [anchor=north west][inner sep=0.75pt]  [color={rgb, 255:red, 208; green, 2; blue, 27 }  ,opacity=1 ]  {$L_{\phi }$};
				\draw (324.44,187.95) node [anchor=north west][inner sep=0.75pt]  [color={rgb, 255:red, 74; green, 144; blue, 226 }  ,opacity=1 ]  {$p_{0}$};
				\draw (357,76.4) node [anchor=north west][inner sep=0.75pt]  [color={rgb, 255:red, 189; green, 16; blue, 224 }  ,opacity=1 ]  {$S_{p_{0}}$};

			\end{tikzpicture}
			\caption{Existence of the convex point for hypersurfaces supported on a equidistant hypersurface}
			\label{exists1}
		\end{figure}
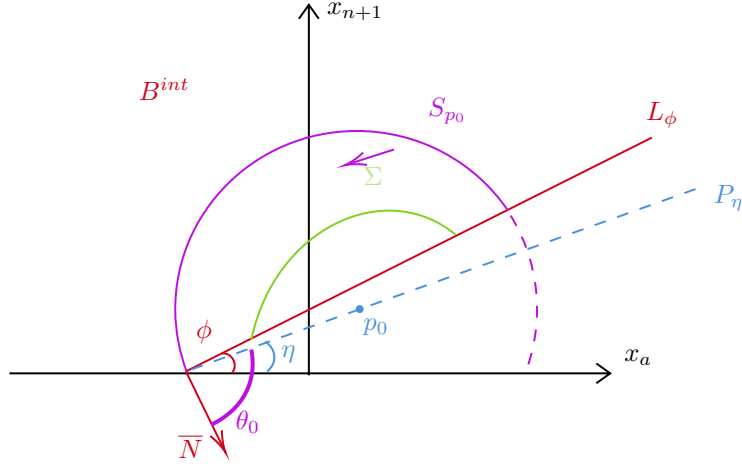
		\par Then we can see that the principal curvature $\lambda(q)\geq\lambda_{S_{p_1}}(q)>0$ of $\Sigma$, then $H_1\equiv H_1(q)>0$.\\
		\textbf{Case 2 ($\phi=0$):} Now $L_{\phi,a}$ is a horosphere and $\theta=\frac{\pi}{2}$ by assumption. Consider a set
		$$P_{r_0}=\{x\in\mathbb H^{n+1}: x_{n+1}=r_0\},$$
		for $0<r_0<1$. 
		Let $S_{R,p}$ be the hypersurface defined by 
		$$S_{R,p}=\{x\in\mathbb H^{n+1}: |\widetilde x-p|=R, p\in P_{r_0}\}.$$
		Since $\Sigma$ is compact, there must exist $p_0\in P_{r_0}$ and a positive $R_0$ big enough such that $\Sigma$ is contained in the domain enclosed by $S_{R_0,p_0}$ and $L_{\phi,a}$. Now fix $p_0$ and shrinking the radius $R_0$ of $S_{R_0,p_0}$ in the Euclidean sense until it touches $\Sigma$ at $p$ for the first time when the radius is $R_1$. Since the contact angle of $S_{R_1,p_0}$ is less than $\frac{\pi}{2}$, the firstly touching point $q$ must be the interior point on $\Sigma$. Since the contact angle of $S_{R_1,p_0}$ and $\partial_\infty\mathbb H^{n+1}:=\{x_{n+1}=0\}$ is greater than $\frac{\pi}{2}$, $\Sigma$ is staying in the interior side of $S_{R_1,p_0}$. See Figure \ref{exsits2}.
		\begin{figure}[ht]
			\centering

			\tikzset{every picture/.style={line width=0.75pt}} 
			
			\begin{tikzpicture}[x=0.75pt,y=0.75pt,yscale=-1,xscale=1]
				
				\draw  (153,245) -- (501.3,245)(326.3,45) -- (326.3,245) (494.3,240) -- (501.3,245) -- (494.3,250) (321.3,52) -- (326.3,45) -- (331.3,52)  ;
				\draw [color={rgb, 255:red, 208; green, 2; blue, 27 }  ,draw opacity=1 ]   (163.3,193) -- (499.3,192) ;
				\draw [color={rgb, 255:red, 74; green, 144; blue, 226 }  ,draw opacity=1 ] [dash pattern={on 4.5pt off 4.5pt}]  (164.3,220) -- (498.3,219) ;
				\draw  [draw opacity=0] (221.87,192.85) .. controls (233.52,158.06) and (266.6,132.95) .. (305.62,132.91) .. controls (345.86,132.87) and (379.83,159.49) .. (390.53,195.94) -- (305.72,220.44) -- cycle ; \draw  [color={rgb, 255:red, 189; green, 16; blue, 224 }  ,draw opacity=1 ] (221.87,192.85) .. controls (233.52,158.06) and (266.6,132.95) .. (305.62,132.91) .. controls (345.86,132.87) and (379.83,159.49) .. (390.53,195.94) ;  
				\draw  [draw opacity=0][dash pattern={on 4.5pt off 4.5pt}] (221.34,246.38) .. controls (218.78,238.21) and (217.4,229.53) .. (217.39,220.53) .. controls (217.38,211.46) and (218.77,202.71) .. (221.34,194.48) -- (305.72,220.44) -- cycle ; \draw  [color={rgb, 255:red, 189; green, 16; blue, 224 }  ,draw opacity=1 ][dash pattern={on 4.5pt off 4.5pt}] (221.34,246.38) .. controls (218.78,238.21) and (217.4,229.53) .. (217.39,220.53) .. controls (217.38,211.46) and (218.77,202.71) .. (221.34,194.48) ;  
				\draw  [draw opacity=0][dash pattern={on 4.5pt off 4.5pt}] (389.26,191.95) .. controls (392.35,200.85) and (394.03,210.4) .. (394.04,220.34) .. controls (394.05,229.72) and (392.57,238.75) .. (389.83,247.22) -- (305.72,220.44) -- cycle ; \draw  [color={rgb, 255:red, 189; green, 16; blue, 224 }  ,draw opacity=1 ][dash pattern={on 4.5pt off 4.5pt}] (389.26,191.95) .. controls (392.35,200.85) and (394.03,210.4) .. (394.04,220.34) .. controls (394.05,229.72) and (392.57,238.75) .. (389.83,247.22) ;  
				\draw [color={rgb, 255:red, 126; green, 211; blue, 33 }  ,draw opacity=1 ]   (246.3,194) .. controls (247.3,144) and (367.3,109) .. (364.3,192) ;
				\draw  [color={rgb, 255:red, 74; green, 144; blue, 226 }  ,draw opacity=1 ][fill={rgb, 255:red, 74; green, 144; blue, 226 }  ,fill opacity=1 ] (302.72,220.44) .. controls (302.72,219.61) and (303.39,218.94) .. (304.22,218.94) .. controls (305.04,218.94) and (305.72,219.61) .. (305.72,220.44) .. controls (305.72,221.27) and (305.04,221.94) .. (304.22,221.94) .. controls (303.39,221.94) and (302.72,221.27) .. (302.72,220.44) -- cycle ;
				\draw [color={rgb, 255:red, 189; green, 16; blue, 224 }  ,draw opacity=1 ]   (232.3,176) -- (242.7,183.8) ;
				\draw [shift={(244.3,185)}, rotate = 216.87] [color={rgb, 255:red, 189; green, 16; blue, 224 }  ,draw opacity=1 ][line width=0.75]    (10.93,-3.29) .. controls (6.95,-1.4) and (3.31,-0.3) .. (0,0) .. controls (3.31,0.3) and (6.95,1.4) .. (10.93,3.29)   ;
				
				\draw (141,181.4) node [anchor=north west][inner sep=0.75pt]  [color={rgb, 255:red, 208; green, 2; blue, 27 }  ,opacity=1 ]  {$L_{0}$};
				\draw (139,207.4) node [anchor=north west][inner sep=0.75pt]  [color={rgb, 255:red, 74; green, 144; blue, 226 }  ,opacity=1 ]  {$P_{r_{0}}{}$};
				\draw (201,124.4) node [anchor=north west][inner sep=0.75pt]  [color={rgb, 255:red, 189; green, 16; blue, 224 }  ,opacity=1 ]  {$S_{p_{0} ,R_{0}}$};
				\draw (263,163.4) node [anchor=north west][inner sep=0.75pt]  [color={rgb, 255:red, 126; green, 211; blue, 33 }  ,opacity=1 ]  {$\Sigma $};
				\draw (281,208.4) node [anchor=north west][inner sep=0.75pt]  [color={rgb, 255:red, 74; green, 144; blue, 226 }  ,opacity=1 ]  {$p_{0}$};
				\draw (331,47.4) node [anchor=north west][inner sep=0.75pt]    {$x_{n+1}$};
				\draw (503,240.4) node [anchor=north west][inner sep=0.75pt]    {$x_{a}$};

			\end{tikzpicture}
			\caption{Existence of the convex point for hypersurfaces supported on a horosphere}
			\label{exsits2}
		\end{figure}
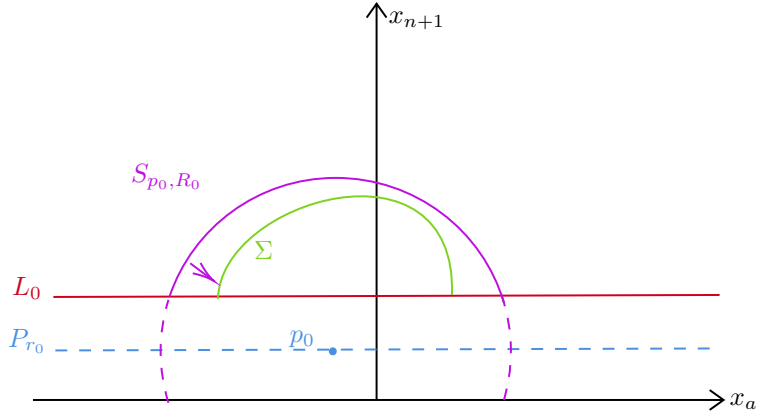 
		\par Then it holds that $\lambda(q)\geq\lambda_{S_{R_1,p_0}}>0$ and $H_1\equiv H_1(q)>0$.
		Therefore the Heintze-Karcher inequality in Theorem \ref{hkequi} can be applied. And from \eqref{xST}, \eqref{xST}, \eqref{aSB} and \eqref{aST}, we are able to rewrite \eqref{equiheintze} as
		\begin{equation*}
			\begin{aligned}
				\int_\Sigma\frac{V_{n+1}}{H_1}dA\geq&(n+1)\int_\Omega V_{n+1}d\Omega+n\cos\theta\frac{\left(\int_T V_{n+1}dA_T\right)^2}{\int_{\partial\Sigma}\overline g(-\cos\phi \widetilde x+\sin\phi a,\mu)ds}\\
				=&(n+1)\int_\Omega V_{n+1}d\Omega+\cos\theta\frac{\left(\int_T V_{n+1}dA_T\right)^2}{H_1\int_{\Sigma}\overline g(\cos\phi \widetilde x-\sin\phi a,\nu)dA}\\
				=&(n+1)\int_\Omega V_{n+1}d\Omega+\frac{\sec\theta\cos\phi}{H_1}\int_\Sigma\overline g(\widetilde x,\nu)dA.
			\end{aligned}
		\end{equation*}
	    From \eqref{confXn+1}, we have 
	    \begin{equation*}
            \begin{aligned}
            (n+1)V_{n+1}d\Omega=&\int_\Omega\overline{\divv}X_{n+1}d\Omega\\
            =&\int_\Sigma\overline g(X_{n+1},\nu)dA+\int_T\overline g(X_{n+1},\overline N)dA_T\\
            =&\int_\Sigma\overline g(X_{n+1},\nu)dA.
            \end{aligned}
	    \end{equation*}
		Now the rest is the same as the prove of Theorem \ref{alex}. Together with Minkowski type formula \eqref{minkequiint}, we prove that $\Sigma$ is umbilical.
	\end{proof}
	\begin{rem}\normalfont
		Indeed, the following example indicate that there exists a totally umbilical capillary hypersurface in $B^{\mrm{int}}_{\phi,a}$ without any convex point. Consider a totally geodesic hypersurface is given as follows
		\begin{equation*}
			\Sigma=\{\widetilde x\in\mathbb H_{n+1}:\delta(\widetilde x,\widetilde x)=2\},
		\end{equation*}  
		and the supporting hypersurface is $L_{\frac{\pi}{6}}$.
		Then the totally geodesic $\Sigma$ is a capillary hypersurface supported on $L_{\frac{\pi}{6}}$ and the contact angle $\theta=\arccos\frac{\sqrt 6}{4}\approx 0.912$, but it is apparent to have not any convex point. Also we can see that $\theta+\phi\approx 1.435<\frac{\pi}{2}$, which is not satisfying the condition in Theorem \ref{alexint}. See Figure \ref{nonexistsconv}.
		\begin{figure}[ht]
			\includegraphics[width=15cm]{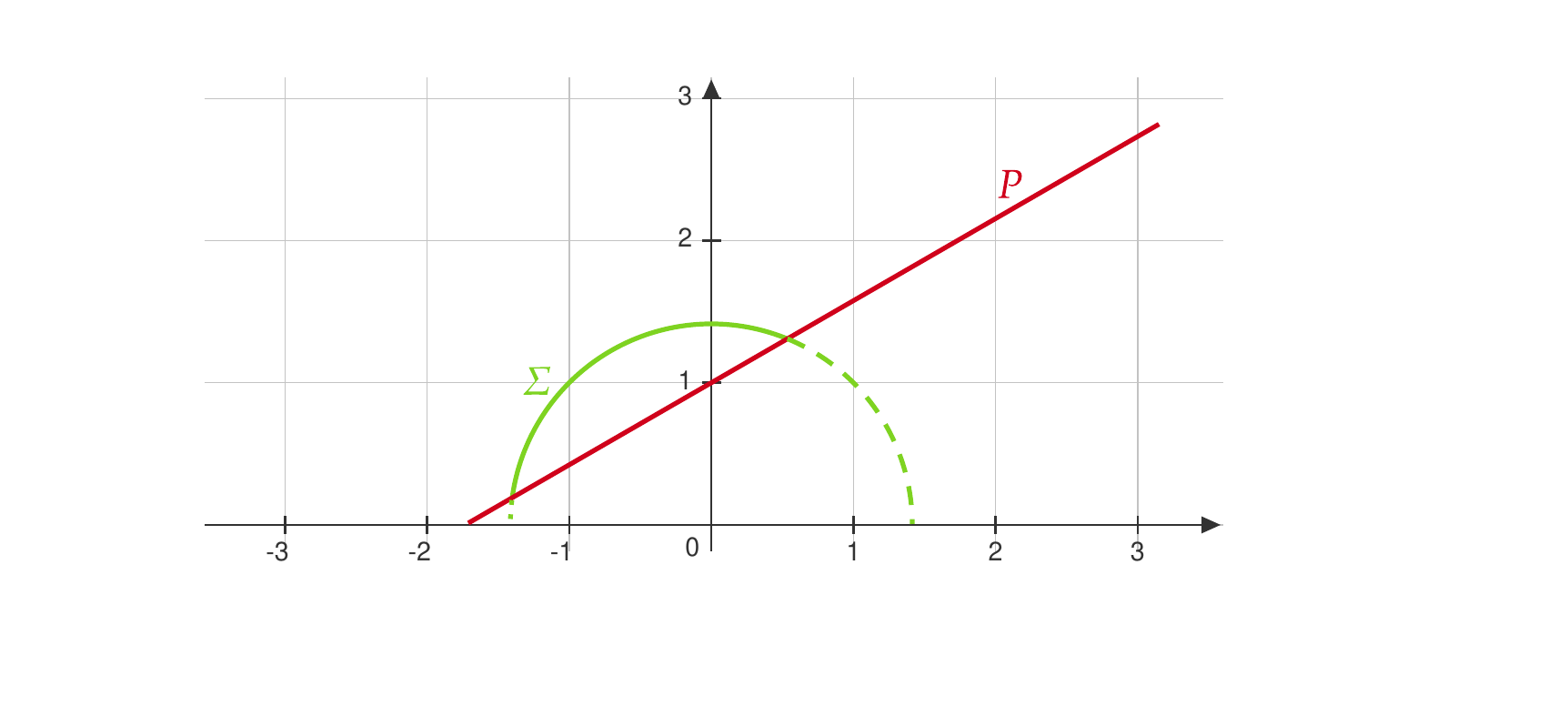}
			\caption{Capillary hypersurfaces in $B^{\mrm{int}}_{\phi,a}$ without any convex point}
			\label{nonexistsconv}
		\end{figure}
	\end{rem}
	We can see from the proof of Theorem \ref{alexint} that the assumption ``$\phi+\theta>\frac{\pi}{2}$ for $\phi>0$ or $\theta=\frac{\pi}{2}$ for $\phi=0$'' can be replaced by $H_1>0$. Then, we have the following corollary.
	\begin{cor}\label{coro}
		Let $\Sigma$ be a compactly embedded CMC capillary hypersurface contained in $B^{\mrm{int}}_{\phi,a}$ supported on $L_{\phi,a}$, and the contact angle satisfies $\theta\in(0,\frac{\pi}{2}]$. Assume $H_1>0$, then $\Sigma$ is umbilical except for being totally geodesic. 
	\end{cor}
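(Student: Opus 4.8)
The plan is to observe that the only role played by the angle hypothesis ``$\phi+\theta>\frac{\pi}{2}$ for $\phi>0$ or $\theta=\frac{\pi}{2}$ for $\phi=0$'' in the proof of Theorem \ref{alexint} is to force the mean curvature $H_1$ to be positive: in both Case 1 and Case 2 of that proof, the angle condition is used solely to produce a point where a sweeping family of equidistant hypersurfaces (respectively geodesic spheres) first touches the interior of $\Sigma$ from outside, which yields $\lambda(q)\geq\lambda_{S}(q)>0$ and hence, since $\Sigma$ is CMC, $H_1\equiv H_1(q)>0$ everywhere. Once $H_1>0$ is known, the remainder of the argument is purely analytic and never again invokes the angle condition. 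Since the corollary assumes $H_1>0$ outright, I would simply discard the touching-hypersurface construction and feed $H_1>0$ directly into the rest of the proof.

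Concretely, first I would invoke the Heintze-Karcher inequality of Theorem \ref{hkequi}, which applies precisely because $\Sigma$ is compact, embedded, with contact angle $\theta\in(0,\frac{\pi}{2}]$ and $H_1>0$; this yields \eqref{equiheintze}. Next I would use the boundary identities \eqref{xST}, \eqref{aST}, \eqref{aSB} together with the constancy of $H_1$ to rewrite the last term of \eqref{equiheintze} as a quantity of the form $\frac{\sec\theta\cos\phi}{H_1}\int_\Sigma \overline g(\widetilde x,\nu)\,dA$, exactly as in the displayed computation at the end of the proof of Theorem \ref{alexint}.

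Then I would apply the conformal identity \eqref{confXn+1} and the divergence theorem on $\Omega$, using $\overline g(X_{n+1},\overline N)=0$ on $T$ from \eqref{NnormalX}, to replace $(n+1)\int_\Omega V_{n+1}\,d\Omega$ by $\int_\Sigma \overline g(X_{n+1},\nu)\,dA$. After these substitutions, the two sides of the Heintze-Karcher inequality are matched exactly by the Minkowski-type formula \eqref{minkequiint} of Proposition \ref{minpropequiint} specialized to $k=1$ (recall $H_1$ is constant, so $H_1$ factors out of the integrals). Hence equality must hold in \eqref{equiheintze}. Finally I would invoke the rigidity statement of Theorem \ref{hkequi}: equality in the Heintze-Karcher inequality forces $\Sigma$ to be umbilical, and since $H_1>0$ the hypersurface cannot be totally geodesic, completing the proof.

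The main obstacle is essentially nil, since all the content is already carried by Theorem \ref{hkequi} and Proposition \ref{minpropequiint}. The one point that genuinely needs checking is the claim made implicitly above: that \emph{every} step of the proof of Theorem \ref{alexint} following the convex-point construction is logically independent of the angle hypothesis, so that replacing ``existence of a convex point'' by the direct assumption $H_1>0$ leaves the argument wholly intact. Verifying this amounts to tracing that the angle condition enters nowhere in the chain from the application of Theorem \ref{hkequi} through the Minkowski identity, which is straightforward from the structure of that proof.
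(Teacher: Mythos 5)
Your proposal is correct and is precisely the argument the paper intends: the remark following Theorem \ref{alexint} states that the angle hypothesis serves only to guarantee a convex point (hence $H_1>0$), and the paper derives Corollary \ref{coro} exactly by substituting the assumption $H_1>0$ for that construction and running the remainder of the proof of Theorem \ref{alexint} unchanged. Your tracing of the subsequent steps (Theorem \ref{hkequi}, the identities \eqref{xST}--\eqref{aSB}, the divergence identity for $X_{n+1}$, and the Minkowski formula \eqref{minkequiint} with $k=1$) matches the paper's chain of reasoning.
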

	\section{Capillary hypersurfaces in a geodesic ball}
	In this section, we give an Alexandrov type theorem for capillary hypersurfaces in a geodesic ball. Consider a geodesic ball $B_R$ centered at the origin in the Poincar\'{e} ball model, where $R$ the hyperbolic radius. Then $B_R$ can be given by 
	$$B_R=\{x\in\mathbb H^{n+1}:g(x,x)\leq R^2\}=\{x\in\mathbb H^{n+1}:|x|^2\leq R_\delta^2\},$$
	where $R_\delta$ is the Euclidean radius of $B_R$. Thus, the following may be easily noted:
	\begin{equation*}
		\cosh R=\frac{1+R_\delta^2}{1-R_\delta^2}\quad\mbox{ and }\quad \sinh R=\frac{2R_\delta}{1-R_\delta^2}.
	\end{equation*}
Then the unit outward normal $\overline N$ of $B_R$ satisfies that
\begin{equation}\label{Nx}
	\overline N=\frac{1}{\sinh R}x.
\end{equation}
	In the paper of Wang and Xia (See \cite{WangXia2019Stablecapillary}), the authors give a family of conformal Killing vector fields $X_a$. It is given by
	$$X_a=\frac{2}{1-R_\delta^2}\left[\delta(x,a)x-\frac{1}{2}(|x|^2+R_\delta^2)a\right],$$
	where $a$ is a constant vector with respect to the Euclidean metric $(\mathbb B^{n+1},\delta)$ satisfying that 
	\begin{equation*}
		\frac{1}{2}\left(\overline g(\overline\nabla_{\overline E_A}X_{a},\overline E_B)+\overline g(\overline\nabla_{\overline E_B}X_{a},\overline E_A)\right)=V_{a}\overline g,
	\end{equation*}
	where $V_a=\frac{2\delta(x,a)}{1-|x|^2}$ satisfying $\overline\nabla^2V_a=V_a\overline g$. A direct computation shows that
	\begin{equation}\label{XaYa}
		\cosh RX_a=V_ax-\sinh^2 RY_a.
	\end{equation}
	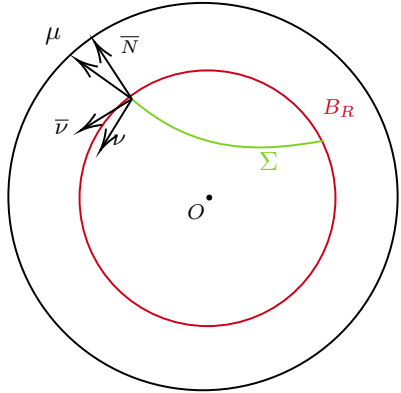
\begin{figure}[ht]
		\centering

		\tikzset{every picture/.style={line width=0.75pt}} 
		
		\begin{tikzpicture}[x=0.75pt,y=0.75pt,yscale=-1,xscale=1]
			
			\draw  [color={rgb, 255:red, 0; green, 0; blue, 0 }  ,draw opacity=1 ][fill={rgb, 255:red, 0; green, 0; blue, 0 }  ,fill opacity=1 ] (298.24,202.4) .. controls (298.05,201.9) and (297.48,201.65) .. (296.98,201.84) .. controls (296.47,202.03) and (296.21,202.59) .. (296.4,203.09) .. controls (296.6,203.6) and (297.16,203.85) .. (297.67,203.66) .. controls (298.18,203.47) and (298.43,202.91) .. (298.24,202.4) -- cycle ;
			\draw  [color={rgb, 255:red, 0; green, 0; blue, 0 }  ,draw opacity=1 ] (195.99,202.25) .. controls (195.99,148.27) and (239.94,104.51) .. (294.15,104.51) .. controls (348.35,104.51) and (392.3,148.27) .. (392.3,202.25) .. controls (392.3,256.24) and (348.35,300) .. (294.15,300) .. controls (239.94,300) and (195.99,256.24) .. (195.99,202.25) -- cycle ;
			\draw  [color={rgb, 255:red, 208; green, 2; blue, 27 }  ,draw opacity=1 ] (231.88,203.09) .. controls (231.88,167.46) and (260.77,138.57) .. (296.4,138.57) .. controls (332.04,138.57) and (360.93,167.46) .. (360.93,203.09) .. controls (360.93,238.73) and (332.04,267.62) .. (296.4,267.62) .. controls (260.77,267.62) and (231.88,238.73) .. (231.88,203.09) -- cycle ;
			\draw [color={rgb, 255:red, 126; green, 211; blue, 33 }  ,draw opacity=1 ]   (258.3,153) .. controls (290.3,181) and (322.3,180) .. (355.3,174) ;
			\draw    (258.3,153) -- (240.4,125.67) ;
			\draw [shift={(239.3,124)}, rotate = 56.77] [color={rgb, 255:red, 0; green, 0; blue, 0 }  ][line width=0.75]    (10.93,-3.29) .. controls (6.95,-1.4) and (3.31,-0.3) .. (0,0) .. controls (3.31,0.3) and (6.95,1.4) .. (10.93,3.29)   ;
			\draw    (258.3,153) -- (231.93,134.16) ;
			\draw [shift={(230.3,133)}, rotate = 35.54] [color={rgb, 255:red, 0; green, 0; blue, 0 }  ][line width=0.75]    (10.93,-3.29) .. controls (6.95,-1.4) and (3.31,-0.3) .. (0,0) .. controls (3.31,0.3) and (6.95,1.4) .. (10.93,3.29)   ;
			\draw    (258.3,153) -- (243.35,177.3) ;
			\draw [shift={(242.3,179)}, rotate = 301.61] [color={rgb, 255:red, 0; green, 0; blue, 0 }  ][line width=0.75]    (10.93,-3.29) .. controls (6.95,-1.4) and (3.31,-0.3) .. (0,0) .. controls (3.31,0.3) and (6.95,1.4) .. (10.93,3.29)   ;
			\draw    (258.3,153) -- (234,167.95) ;
			\draw [shift={(232.3,169)}, rotate = 328.39] [color={rgb, 255:red, 0; green, 0; blue, 0 }  ][line width=0.75]    (10.93,-3.29) .. controls (6.95,-1.4) and (3.31,-0.3) .. (0,0) .. controls (3.31,0.3) and (6.95,1.4) .. (10.93,3.29)   ;
			
			\draw (352.64,152.02) node [anchor=north west][inner sep=0.75pt]  [font=\footnotesize,color={rgb, 255:red, 208; green, 2; blue, 27 }  ,opacity=1 ,rotate=-357.63] [align=left] {$\displaystyle B_{R}$};
			\draw (284.81,204.98) node [anchor=north west][inner sep=0.75pt]  [font=\footnotesize,rotate=-359.36] [align=left] {$\displaystyle O$};
			\draw (251,119.4) node [anchor=north west][inner sep=0.75pt]  [font=\scriptsize]  {$\overline{N}$};
			\draw (213,115.4) node [anchor=north west][inner sep=0.75pt]    {$\mu $};
			\draw (247,169.4) node [anchor=north west][inner sep=0.75pt]  [font=\small]  {$\nu $};
			\draw (218,161.4) node [anchor=north west][inner sep=0.75pt]  [font=\small]  {$\overline{\nu }$};
			\draw (321,178.4) node [anchor=north west][inner sep=0.75pt]  [color={rgb, 255:red, 126; green, 211; blue, 33 }  ,opacity=1 ]  {$\Sigma $};

		\end{tikzpicture}
		\caption{Capillary hypersurface $\Sigma$ supported on a geodesic ball $B_R$}
	\end{figure}
	\par We have the following properties analogously,
	\begin{prop}\label{propgeo} Let $\Sigma$ be an embedded capillary hypersurface supported on the 
		geodesic ball $B_R$. Let $\Omega$ be the domain enclosed by $\Sigma$ and $\partial B_R$ and $\partial\Omega=\Sigma\cup T$, we have 
		\begin{equation}\label{geo1}
			\int_TV_adA_T=-\sinh R\int_{\Sigma}\overline g(Y_a,\nu)dA,
		\end{equation}
		\begin{equation}\label{geo2}
			\int_{\partial\Sigma}\overline g(Y_a,\mu)ds=-\int_\Sigma nH_1\overline g(Y_a,\nu)dA,
		\end{equation}
		\begin{equation}\label{geo3}
			\int_\Sigma\left(V_a+\cos\theta\sinh R\overline g(Y_a,\nu)-H_1\overline g(X_a,\nu)\right)dA=0,
		\end{equation}
	and
	\begin{equation}\label{geo4}
		\int_\Sigma n\sinh R\cos\theta\overline g(Y_a,\nu)dA=\int_{\partial\Sigma}\overline g(X_a,\mu)ds.
	\end{equation}
	\end{prop}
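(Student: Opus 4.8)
The plan is to prove the four identities as the geodesic-ball counterparts of the formulae already established for the totally geodesic plane $P$ and the equidistant hypersurface $L_{\phi,a}$, exploiting three structural facts: $Y_a$ is a Killing field, $X_a$ is a conformal Killing field with potential $V_a$, and on the geodesic sphere $\partial B_R$ one has both $\overline N=\frac{1}{\sinh R}x$ (by \eqref{Nx}) and $\overline g(X_a,\overline N)=0$, the latter because a short Euclidean computation gives $\delta(X_a,x)=0$ once $|x|^2=R_\delta^2$. I would also record at the outset the pointwise identity $\overline g(Y_a,x)=V_a$, which follows from $\delta(Y_a,x)=\tfrac12(1-|x|^2)\delta(x,a)$ together with the conformal factor $\frac{4}{(1-|x|^2)^2}$, and the algebraic relation $V_0\bar a-\delta(x,a)x=Y_a$.

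First I would dispatch \eqref{geo1} and \eqref{geo2}. Since $Y_a$ is Killing, $\overline\divv Y_a=0$, so integrating over the enclosed domain $\Omega$ and splitting $\partial\Omega=\Sigma\cup T$ gives $0=\int_\Sigma\overline g(Y_a,\nu)dA+\int_T\overline g(Y_a,\overline N)dA_T$; on $T\subset\partial B_R$ one has $\overline g(Y_a,\overline N)=\frac{1}{\sinh R}\overline g(Y_a,x)=\frac{V_a}{\sinh R}$, which rearranges to \eqref{geo1}. For \eqref{geo2} I restrict \eqref{confofY} to $\Sigma$: for the tangential part $P_\Sigma Y_a$ the antisymmetry kills the trace of $\overline\nabla Y_a$ along the frame and leaves $\divv_\Sigma(P_\Sigma Y_a)=-nH_1\overline g(Y_a,\nu)$, so the divergence theorem on $\Sigma$ converts the left side into the boundary integral $\int_{\partial\Sigma}\overline g(Y_a,\mu)ds$, which is \eqref{geo2}.

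The heart of the matter is \eqref{geo3} and \eqref{geo4}, which I would treat together as a Minkowski-type argument parallel to Propositions \ref{propP} and \ref{minpropequiint}. Restricting the conformal Killing equation for $X_a$ to $\Sigma$ yields $\divv_\Sigma(P_\Sigma X_a)=n\left(V_a-H_1\overline g(X_a,\nu)\right)$ (using $\tr T_0=n$ and $\tr(T_0\circ h)=nH_1$ in the $k=1$ case), and the divergence theorem produces the boundary term $\int_{\partial\Sigma}\overline g(X_a,\mu)ds=\cos\theta\int_{\partial\Sigma}\overline g(X_a,\overline\nu)ds$, where I use $\overline g(X_a,\overline N)=0$ and \eqref{angle}. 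To rewrite this boundary term as a bulk $Y_a$-integral I introduce the tangential field $W_a=\overline g(\nu,\bar a)x-\overline g(x,\nu)\bar a$, the exact analogue of the field $U_{n+1}$ used in Proposition \ref{propP}; by the computation \eqref{nablaU} (whose input is Proposition \ref{deriP}, which is model-independent), $\overline g(\nabla_{e_i}W_a,e_j)=\overline g(Y_a,\nu)\delta_{ij}$, so that $\int_\Sigma n\overline g(Y_a,\nu)dA=\int_{\partial\Sigma}\overline g(W_a,\mu)ds$. On $\partial\Sigma$ I evaluate $\overline g(W_a,\mu)=\sinh R\,\overline g(\bar a,\overline\nu)$ using $x=\sinh R\,\overline N$ and the identity $\sin\theta\,\nu+\cos\theta\,\mu=\overline\nu$; finally \eqref{XaYa} together with $V_0=\cosh R$ on $\partial B_R$ converts $\overline g(\bar a,\overline\nu)$ into a multiple of $\overline g(X_a,\overline\nu)$. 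This yields \eqref{geo4}, and substituting it back into the $X_a$-identity eliminates the boundary term and produces \eqref{geo3}.

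The main obstacle I anticipate is the boundary-term bookkeeping in this last step: one must correctly track the factors $\sinh R$ and $\cosh R$ that enter through $\overline N=\frac{1}{\sinh R}x$, through $V_0=\cosh R$ on $\partial B_R$, and through \eqref{XaYa}. It is worth emphasizing that \eqref{XaYa} is an identity on the geodesic sphere $\partial B_R$ (where $|x|^2=R_\delta^2$), which is exactly where $\partial\Sigma$ lives, so it may legitimately be applied inside the boundary integrals. Keeping the signs consistent between \eqref{geo3}, \eqref{geo4} and \eqref{geo1} is the only genuinely delicate point; everything else is a direct transcription of the arguments already carried out for $P$ and $L_{\phi,a}$.
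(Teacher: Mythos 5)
Your overall strategy is exactly the one the paper intends: the paper's own ``proof'' is only a two-line sketch that handles \eqref{geo1}--\eqref{geo2} by the Killing-field divergence argument you describe and defers \eqref{geo3}--\eqref{geo4} to the proof of Proposition 4.4 in Wang--Xia, which is precisely the Minkowski-type computation with the auxiliary tangential field $W_a=\overline g(\nu,\bar a)x-\overline g(x,\nu)\bar a$ that you reconstruct. Your treatment of \eqref{geo1}, \eqref{geo2} and \eqref{geo3} is correct, and your observation that \eqref{XaYa} is an identity only on $\partial B_R$ (where $|x|^2=R_\delta^2$) is a genuine subtlety the paper glosses over.

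There is, however, one concrete problem: the sign bookkeeping that you defer as ``the only genuinely delicate point'' does not come out the way you claim, and in fact \eqref{geo3} and \eqref{geo4} as printed are mutually inconsistent, so no correct argument can deliver both. Restricting the conformal Killing equation for $X_a$ to $\Sigma$ gives the elementary identity $\int_\Sigma n\left(V_a-H_1\overline g(X_a,\nu)\right)dA=\int_{\partial\Sigma}\overline g(X_a,\mu)ds$; combined with \eqref{geo3} this forces
\begin{equation*}
\int_{\partial\Sigma}\overline g(X_a,\mu)ds=-\,n\sinh R\cos\theta\int_\Sigma\overline g(Y_a,\nu)dA,
\end{equation*}
i.e.\ \eqref{geo4} with the opposite sign. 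Your own chain of identities confirms this: on $\partial\Sigma$ one has $\overline g(W_a,\mu)=\sinh R\,\overline g(\bar a,\overline\nu)$ and, since $\overline g(x,\overline\nu)=0$ on $\partial B_R$, $\overline g(Y_a,\overline\nu)=V_0\,\overline g(\bar a,\overline\nu)=\cosh R\,\overline g(\bar a,\overline\nu)$, while \eqref{XaYa} gives $\cosh R\,\overline g(X_a,\overline\nu)=-\sinh^2R\,\overline g(Y_a,\overline\nu)$; hence $\overline g(W_a,\mu)=-\tfrac{1}{\sinh R}\overline g(X_a,\overline\nu)=-\tfrac{1}{\sinh R\cos\theta}\overline g(X_a,\mu)$, and integrating $\divv_\Sigma W_a=n\overline g(Y_a,\nu)$ yields the minus-sign version above. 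This corrected version is also the one actually used in the proof of Theorem \ref{geothm} (where the intermediate term $\coth R\,\overline g(Y_a,\mu)$ should read $\coth R\,\overline g(X_a,\mu)$, and only the minus-sign form of \eqref{geo4} makes that computation close up to $-\tfrac{c_0^2}{\cos\theta}\int_{\partial\Sigma}\sinh R\,\overline g(\mu,Y_a)ds$). So you should either state that \eqref{geo4} as printed carries a sign typo and prove the corrected identity, or carry the signs through explicitly rather than asserting that the computation ``yields \eqref{geo4}''; as written, your proposal claims to establish a false statement.
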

	\begin{proof}[Sketch of the proof]
			The proof of \eqref{geo1} and \eqref{geo2} can be seen easily from the proof of other supporting hypersurfaces cases, using the fact that $Y_a$ is a Killing vector field, and \eqref{geo3} and \eqref{geo4} are given in the proof of \cite[Prop 4.4]{WangXia2019Stablecapillary}.
	\end{proof}
	
	\par Consider the following mixed boundary problem
	\begin{equation}\label{Pdegeo}
		\left\{\begin{array}{ll}
			\overline\Delta f-(n+1)f=1&\mbox{ in }\Omega;\\
			f=0&\mbox{ on }\Sigma;\\
			f_{\overline N}-\coth R f=c_0&\mbox{ on }T.
		\end{array}\right.
	\end{equation}
	Where $c_0=-\frac{n}{n+1}\cos\theta\frac{\int_TV_adA_T}{\int_{\partial\Sigma}\sinh R\overline g(Y_a,\mu)ds}$ is chosen. The existence and regularity of the solution to the problem \eqref{Pdegeo} can be obtain from Lieberman's theory. See the Appendix.
	\begin{thm}\label{geothm}
		Let $B_{R,a+}=\{x\in B_R: V_a>0\}$ be a half geodesic ball where $V_a$ is positive. Let $\Sigma\subset B_{R,a+}$ be a compact, embedded capillary hypersurface in $B_{R,a+}$, and the supporting hypersurface is $\partial B_R$. Let $\Omega$ be the domain enclosed by $\Sigma$ and $L_{\phi,a}$. If the contact angle $\theta\in(0,\frac{\pi}{2}]$, and the mean curvature $H_1>0$ on $\Sigma$, we have
		\begin{align}\label{equiheintzeext}
			\int_\Sigma\frac{V_a}{H_1}dA\geq(n+1)\int_\Omega V_ad\Omega+n\cos\theta\frac{\left(\int_T V_adA_T\right)^2}{\int_{\partial\Sigma} \sinh R\overline g(Y_a,\mu)ds},
		\end{align}
		where $\mu$ is the outward unit conormal vector field of $\partial\Sigma$ in $\Sigma$. In addition, the equality holds if and only if $\Sigma$ is umbilical.
	\end{thm}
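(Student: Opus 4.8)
The plan is to run the weighted-Reilly argument used for Theorems \ref{HKineq} and \ref{hkequi}, now with weight $V=V_a$ and the mixed boundary value problem \eqref{Pdegeo}. The only structural novelty is that $V_a$ changes sign in $B_R$, which is precisely why the statement is confined to the half-ball $B_{R,a+}=\{V_a>0\}$: the generalized Reilly formula \eqref{reilly} requires a positive weight, so the domain $\Omega$ must lie inside $B_{R,a+}$. As before, I would first invoke Lieberman's theory (deferred to the appendix) to produce a solution $f$ of \eqref{Pdegeo} with $f\in C^\infty(\overline\Omega\setminus\partial\Sigma)\cap C^2(\Omega\cup T)\cap C^{1,\alpha}(\overline\Omega)$ and $|\overline\nabla^2 f|_{\overline g}\in L^1(T)$, so that $f$ may legitimately be inserted into \eqref{reilly}.

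Two computations make the machine run. First, the substatic cancellation: since $\overline\nabla^2 V_a=V_a\overline g$ one has $\overline\Delta V_a=(n+1)V_a$, and in $\mathbb H^{n+1}$ the tensor $\overline\Delta V_a\,\overline g-\overline\nabla^2 V_a+V_a\overline{\Ric}$ vanishes identically, so the first bulk integral on the right of \eqref{reilly} drops out; moreover the equation $\overline\Delta f-(n+1)f=1$ gives $(\overline\Delta f-\tfrac{\overline\Delta V_a}{V_a}f)^2=1$, and the trace Cauchy--Schwarz inequality $|\overline\nabla^2 f-f\overline g|^2\ge\tfrac1{n+1}(\overline\Delta f-(n+1)f)^2$ bounds the left-hand side of \eqref{reilly} above by $\tfrac{n}{n+1}\int_\Omega V_a\,d\Omega$. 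Second, the boundary weight: by \eqref{Nx} the outer normal of $\partial B_R$ is $\overline N=x/\sinh R$, and since $\overline g(\overline\nabla V_a,x)=\cosh R\,V_a$ on $\partial B_R$ one obtains $\tfrac{(V_a)_{\overline N}}{V_a}=\coth R$. As $\partial B_R$ is totally umbilical with $h=\coth R\,g$, the substatic boundary tensor $h-\tfrac{(V_a)_{\overline N}}{V_a}g$ vanishes on $T$, and the Robin condition $f_{\overline N}-\coth R\,f=c_0$ in \eqref{Pdegeo} is exactly $f_{\overline N}-\tfrac{(V_a)_{\overline N}}{V_a}f\equiv c_0$.

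With these in hand I would evaluate the boundary integrals in \eqref{reilly}. On $\Sigma$ the condition $f=0$ forces $\nabla^\Sigma f=0$ and $f_\mu=0$, so every term except $\int_\Sigma nV_aH_1 f_\nu^2\,dA$ drops out. On $T$ the vanishing of $h-\coth R\,g$ kills the second-fundamental-form term, the constancy of $c_0$ kills the gradient term, and what survives is an interior term on $T$ together with the curvature term $n c_0^2\coth R\int_T V_a\,dA_T$. Green's second identity on $T$ converts the interior term into a $\partial\Sigma$-integral (the $fV_{\overline\nu}$ part dropping since $f|_{\partial\Sigma}=0$), and \eqref{angle} together with $f_\mu=0$ expresses the conormal derivative through $c_0$. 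The identities \eqref{geo1}--\eqref{geo4} of Proposition \ref{propgeo}, combined with \eqref{XaYa} and $\overline N=x/\sinh R$, are then used to rewrite the $T$- and $\partial\Sigma$-contributions in terms of $\int_{\partial\Sigma}\sinh R\,\overline g(Y_a,\mu)\,ds$, matching the chosen $c_0$. Finally, Green's first identity gives $\int_\Omega V_a\,d\Omega=\int_\Sigma V_a f_\nu\,dA+c_0\int_T V_a\,dA_T$, and a Hölder inequality applied to $\int_\Sigma V_a f_\nu\,dA=\int_\Sigma \tfrac{\sqrt{V_a}}{\sqrt{nH_1}}\cdot\sqrt{nV_aH_1}\,f_\nu\,dA$ combines with the Reilly bound to yield \eqref{equiheintzeext}.

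For equality, tracing the argument backwards forces equality in the trace Cauchy--Schwarz step, i.e. $\overline\nabla^2\big(f+\tfrac1{n+1}\big)=\big(f+\tfrac1{n+1}\big)\overline g$; restricting this to $\Sigma$, where $f=0$, gives $f_\nu h_{ij}=\tfrac1{n+1}g_{ij}$, so $\Sigma$ is umbilical. I expect the main obstacle to be the boundary bookkeeping of the previous paragraph: verifying that the specific constant $c_0$ in \eqref{Pdegeo} and the geometric identities of Proposition \ref{propgeo} really do collapse the $T$- and $\partial\Sigma$-contributions into the single denominator $\int_{\partial\Sigma}\sinh R\,\overline g(Y_a,\mu)\,ds$, together with the corner regularity of $f$ near $\partial\Sigma$ needed to justify both \eqref{reilly} and the Green identities. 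The computation $\tfrac{(V_a)_{\overline N}}{V_a}=\coth R$ is the linchpin that makes the supporting-boundary term vanish, exactly as the factor $\cos\phi$ did in the equidistant case.
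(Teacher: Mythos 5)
Your proposal is correct and follows essentially the same route as the paper: the weight $V=V_a$ in the generalized Reilly formula with the solution of \eqref{Pdegeo}, the observation $(V_a)_{\overline N}=\coth R\,V_a$ (so the Robin condition and the vanishing of $h-\tfrac{(V_a)_{\overline N}}{V_a}g$ on $T$ are exactly as you describe), Green's identity on $T$, the collapse of the boundary terms via \eqref{geo1}--\eqref{geo4}, \eqref{XaYa} and $\overline N=x/\sinh R$ into the single denominator $\int_{\partial\Sigma}\sinh R\,\overline g(Y_a,\mu)\,ds$, then H\"older and the Hessian rigidity for the equality case. The bookkeeping you flag as the main obstacle is carried out in the paper exactly along the lines you sketch, so there is no gap.
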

\begin{proof}
	Let $V=V_a$ and $f$ be the solution of \eqref{Pdegeo} in \eqref{reilly}. Using the relations \eqref{Nx}, \eqref{XaYa} and Proposition \ref{propgeo} and noting that $(V_a)_{\overline N}=\coth RV_a$, we have 
	\begin{equation*}
		\begin{aligned}
			\frac{n}{n+1}\int_\Omega V_ad\Omega\geq&\int_\Sigma nV_aH_1f_\nu^2dA+c_0\int_T(V_a\triangle f-f\triangle V_a)dA_T+c_0^2\int_Tn\coth RV_adA_T\\
			=&\int_\Sigma V_aH_1f_\nu^2dA+c_0\int_{\partial\Sigma}V_af_{\overline\nu}ds+c_0^2\int_Tn\coth RV_adA_T\\
			=&\int_\Sigma nV_aH_1f_\nu^2dA-\frac{c_0^2}{\cos\theta}\int_{\partial\Sigma}V_a\overline g(\mu,\frac{1}{\sinh R}x)ds\\
			&-c_0^2\cosh R\int_\Sigma n\overline g(Y_a,\nu)dA\\
			=&\int_\Sigma nV_aH_1f_\nu^2dA-\frac{c_0^2}{\cos\theta}\int_{\partial\Sigma}\overline g(\mu,\coth RX_a+\sinh RY_a)ds\\
			&+\frac{c_0^2}{\cos\theta}\int_{\partial\Sigma}\coth R\overline g(Y_a,\mu)ds\\
			=&\int_\Sigma nV_aH_1f_\nu^2dA-\frac{c_0^2}{\cos\theta}\int_{\partial\Sigma}\sinh R\overline g(\mu,Y_a)ds.
		\end{aligned}
	\end{equation*}
Therefore,
\begin{equation}\label{geostep1}
	\frac{n}{n+1}\left(\int_\Omega V_ad\Omega+\frac{n}{n+1}\cos\theta\frac{\left(\int_TV_adA_T\right)^2}{\int_{\partial\Sigma}\sinh R\overline g(Y_a,\mu)ds}\right)\geq\int_\Sigma nV_aH_1f_\nu^2dA.
\end{equation}
On the other hand,
\begin{equation*}
	\begin{aligned}
		\int_\Omega V_ad\Omega=\int_\Omega(V_a\overline\triangle f-f\overline\triangle V_a)d\Omega=\int_\Sigma V_af_\nu dA+c_0\int_T V_adA_T.
	\end{aligned}
\end{equation*}
We have
\begin{equation}\label{geostep2}
	\begin{aligned}
		\int_\Sigma V_af_\nu dA=&\int_\Omega V_ad\Omega-c_0\int_T V_adA_T\\
		=&\int_\Omega V_ad\Omega+\frac{n}{n+1}\cos\theta\frac{\left(\int_TV_adA_T\right)^2}{\int_{\partial\Sigma}\sinh R\overline g(Y_a,\mu)ds}.
	\end{aligned}
\end{equation}
Hence, using \eqref{geostep1} and \eqref{geostep2} and the similar argument as the proof of Theorem \ref{HKineq}, we prove the inequality in Theorem \ref{geothm}. The equality case is also the same as Theorem \ref{HKineq}.  
\end{proof}
	Using \eqref{geo1}, \eqref{geo2}, \eqref{geo3} in Proposition \ref{propgeo} and the fact that $\overline g(X_a,\overline N)=0$, we can prove the following Alexandrov type theorem.
	\begin{thm}\label{alexgeo}
		Let $\Sigma$ be an embedded CMC capillary hypersurface contained in $B_{R,a+}$ supported on $\partial B_{R}$. If $\theta\in(0,\frac{\pi}{2}]$ and $H_1>0$, then $\Sigma$ is umbilical except for being totally geodesic. 
	\end{thm}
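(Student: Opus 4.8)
The plan is to follow the template used in the proofs of Theorem \ref{alex} and Theorem \ref{alexint}, but with one decisive simplification: since $H_1>0$ is now assumed outright, there is no need to run the barrier/sweeping argument that produced a convex point in those two theorems. One can therefore apply the Heintze--Karcher inequality \eqref{equiheintzeext} of Theorem \ref{geothm} directly, and the goal is to show that for a CMC hypersurface this inequality is forced to be an equality by the Minkowski formula \eqref{geo3}, whence umbilicity follows from the equality statement of Theorem \ref{geothm}.

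First I would rewrite the boundary quotient on the right-hand side of \eqref{equiheintzeext} using the two Killing-field identities of Proposition \ref{propgeo}. By \eqref{geo1} one has $\int_T V_a\,dA_T=-\sinh R\int_\Sigma\overline g(Y_a,\nu)\,dA$, and by \eqref{geo2} one has $\int_{\partial\Sigma}\sinh R\,\overline g(Y_a,\mu)\,ds=-\sinh R\int_\Sigma nH_1\overline g(Y_a,\nu)\,dA$. Substituting these into the quotient turns it into an expression purely over $\Sigma$, and because $H_1$ is constant the quadratic quotient collapses, leaving the boundary term equal to $-\tfrac{\cos\theta\,\sinh R}{H_1}\int_\Sigma\overline g(Y_a,\nu)\,dA$. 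Multiplying \eqref{equiheintzeext} through by $H_1>0$ then yields
\begin{equation*}
  \int_\Sigma\bigl(V_a+\cos\theta\,\sinh R\,\overline g(Y_a,\nu)\bigr)\,dA\ \geq\ H_1\,(n+1)\int_\Omega V_a\,d\Omega .
\end{equation*}

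Next I would evaluate the volume integral on the right. Since $X_a$ satisfies $\overline{\divv}X_a=(n+1)V_a$ (the trace of its conformal Killing equation) and $\overline g(X_a,\overline N)=0$ along $T$, the divergence theorem gives $(n+1)\int_\Omega V_a\,d\Omega=\int_\Sigma\overline g(X_a,\nu)\,dA$. The displayed inequality therefore reads
\begin{equation*}
  \int_\Sigma\bigl(V_a+\cos\theta\,\sinh R\,\overline g(Y_a,\nu)-H_1\overline g(X_a,\nu)\bigr)\,dA\ \geq\ 0 .
\end{equation*}
But the Minkowski formula \eqref{geo3} asserts that the left-hand side is exactly zero. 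Hence equality must hold throughout \eqref{equiheintzeext}, and the equality statement of Theorem \ref{geothm} forces $\Sigma$ to be umbilical; since $H_1>0$, the totally geodesic case is excluded.

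The computation is routine once the identities \eqref{geo1}--\eqref{geo3} and $\overline g(X_a,\overline N)=0$ are in hand, so no single step is a genuine obstacle; the only point demanding care is the sign bookkeeping in the substitution from \eqref{geo1} and \eqref{geo2} (both carry a minus sign), which must be tracked precisely so that the resulting inequality lines up with the Minkowski identity and not its negative. The conceptual content is entirely absorbed into the hypothesis $H_1>0$, which here replaces the delicate existence-of-a-convex-point arguments that were the genuinely geometric ingredient in Theorems \ref{alex} and \ref{alexint}.
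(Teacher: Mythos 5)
Your proposal is correct and follows exactly the route the paper indicates for this theorem: substitute \eqref{geo1} and \eqref{geo2} into the Heintze--Karcher inequality \eqref{equiheintzeext}, use $\overline{\divv}X_a=(n+1)V_a$ together with $\overline g(X_a,\overline N)=0$ on $T$ to convert the volume term, and invoke the Minkowski identity \eqref{geo3} to force equality, with $H_1>0$ (assumed in place of a convex-point argument, as the paper's own remark notes) excluding the totally geodesic case. The sign bookkeeping in your substitution checks out, so no issues.
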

	\begin{rem}
		Since we do not have the existence of convex points on $\Sigma$, we assume that $H_1>0$.
	\end{rem}
		\section{Other rigidity results for capillary hypersurfaces}
		In this section, we will prove the Theorem \ref{constquo} and Theorem \ref{starshaped}. The proof of Theorem \ref{constquo} is due to the Minkowski type formula in Proposition \ref{propP}.
		\begin{proof}[The proof of Theorem \ref{constquo}]
			Since $(H_k/H_l)(p)=\alpha>0$, from Newton-Maclaurin inequality, we have
			\begin{align*}
				H_{k-1}H_l\geq H_kH_{l-1}=\alpha H_lH_{l-1}.
			\end{align*}
			and since $H_l>0$ are positive on $\Sigma$, we have $H_{k-1}-\alpha H_{l-1}\geq0$. On the other hand, applying Proposition \ref{propP}, we have
			\begin{equation}\label{rigityforquotient}
				\begin{aligned}
					0=&\int_\Sigma \left[H_{k-1}(V_0-\cos\theta\overline g(Y_{n+1},\nu))-H_k\overline g(x,\nu)\right]dA\\
					=&\int_\Sigma \left[H_{k-1}(V_0-\cos\theta\overline g(Y_{n+1},\nu))-\alpha H_l\overline g(x,\nu)\right]dA\\
					=&\int_\Sigma (H_{k-1}-\alpha H_{l-1})(V_0-\cos\theta\overline g(Y_{n+1},\nu))dA.
				\end{aligned}  
			\end{equation}
			Now, we consider the term $V_0-\cos\theta\overline g(Y_{n+1},\nu)$. We see that
			\begin{equation*}
				\begin{aligned}
					\overline g(Y_{n+1},Y_{n+1})=&\frac{1}{4}(1+|x|^2)^2\overline g(E_{n+1},E_{n+1})+\langle x,E_{n+1}\rangle^2\overline g(x,x)\\
					&-\langle x,E_{n+1}\rangle (1+|x|^2)\overline g(x,E_{n+1})\\
					=&\frac{(1+|x|^2)^2-4\langle x,E_{n+1}\rangle^2}{(1-|x|^2)^2}.
				\end{aligned}  
			\end{equation*}
			Since $\langle x,E_{n+1}\rangle>0$ on $\mbox{int}(\Sigma)$, we have $$|\overline g(Y_{n+1},\nu)|\leq\sqrt{\overline g(Y_{n+1},Y_{n+1})}< V_0,$$ and therefore
			\begin{align}\label{posiv0ynu}
				V_0-\cos\theta\overline g(Y_{n+1},\nu)>0.
			\end{align}
			Hence, combining \eqref{rigityforquotient} and \eqref{posiv0ynu}, we have $H_{k-1}-\alpha H_{l-1}=0$. This results in 
			\begin{align}
				H_{k-1}H_l=H_kH_{l-1}.
			\end{align}
			Hence, examining the equality condition of Newton-Maclaurin inequality,  we obtain that $\Sigma$ is totally umbilical, completing the proof of Theorem \ref{constquo}.
		\end{proof}
		Now we give a proof of Theorem \ref{starshaped}.
		\begin{proof}[Proof of Theorem \ref{starshaped}]
			Let $G$ be a smooth function defined by $G=nH_1V_0-n\overline g(x,\nu)$. Integrating the Laplacian of $G$, we have
			\begin{equation*}
				\begin{aligned}
					\int_\Sigma\Delta GdA=&\int_{\partial\Sigma}\nabla_\mu Gds\\
					=&\int_{\partial\Sigma}(nH_1-nh(\mu,\mu))\overline g(x,\mu))ds\\
					=&\cos\theta\int_{\partial\Sigma}(nH_1-nh(\mu,\mu))\overline g(x,\overline\nu)ds.
				\end{aligned}
			\end{equation*}
		 For any $e\in T(\partial\Sigma)$, we can see from\eqref{angle} that 
			\begin{align}
				h(e,e)=\overline g(\overline\nabla_e\nu,e)=\overline g(\overline\nabla_e(\cos\theta\overline N+\sin\theta\overline\nu),e)=\sin\theta h^{\partial\Sigma}(e,e).
			\end{align}
			By letting $k=1$ in \eqref{diverU}, we have 
			\begin{align*}
				\int_\Sigma n\overline g(Y_{n+1},\nu)dA=\int_{\partial\Sigma}\overline g(x,\overline\nu)ds.
			\end{align*}
			Then we have 
			\begin{equation}\label{rigidityaboutG}
				\begin{aligned}
					\int_\Sigma\Delta GdA=&\cos\theta\int_{\partial\Sigma}(nH_1-nh(\mu,\mu))\overline g(x,\overline\nu)ds\\
					=&\cos\theta\int_{\partial\Sigma}(n(nH_1-h(\mu,\mu))-n(n-1)H_1)\overline g(x,\overline\nu)ds\\
					=&\cos\theta\int_{\partial\Sigma}(n(n-1)\sin\theta H^{\partial\Sigma}_1-n(n-1)H_1)\overline g(x,\overline\nu)ds\\
					=&\cos\theta\left[\int_{\partial\Sigma}n(n-1)\sin\theta H^{\partial\Sigma}_1\overline g(x,\overline\nu)ds-n^2(n-1)H_1\right.\\
					&\quad\left.\times\int_\Sigma\overline g(Y_{n+1},\nu)dA\right]\\
					=&\cos\theta\int_{\partial\Sigma}(n(n-1)\sin\theta H^{\partial\Sigma}_1\overline g(x,\overline\nu)-n(n-1)\sin\theta V_0)ds\\
					=&0.
				\end{aligned}
			\end{equation}
			We use \eqref{v0onparSig} in the fifth equality. In the last equality, we use a Minkowski type formula for closed hypersurface in $P$ (See \cite{Brendle2013CMC}), which is the $(n-1)$-dimensional hyperbolic space.
			
			On the other hand, it is well known that in hypersurface
			\begin{align*}
				\Delta V_0=nV_0-nH_1\overline\nabla_\nu V_0,
			\end{align*}
			and 
			
			\begin{align*}
				\Delta\overline g(x,\nu)=nH_1V_0-|h|^2\overline g(X,\nu)-n\overline\nabla_\nu V+n\overline g(X,\nu).
			\end{align*}    
			
			Together with \eqref{rigidityaboutG}, we have 
			\begin{align*}
				0=\int_\Sigma\Delta GdA=\int_\Sigma n(|h|^2-nH_1^2)\overline g(x,\nu)dA.
			\end{align*}
			Therefore, from the star-shapedness of $\Sigma$, we can see that $\Sigma$ is umbilical. In addition, the totally geodesic case is excluded by the compactness assumption on $\Sigma$. 
		\end{proof}
		\begin{rem}\normalfont
			All of our results require that $\Sigma$ is compact. Because $\Sigma$ is totally umbilical and contained in $\mathbb B^{n+1}_+$, it can be compact and a part of a horosphere or an equidistant hypersurface simultaneously. If so, we notice that the contact angle $\theta$ must lie in the open interval $(0,\frac{\pi}{2})$. See the Figure \ref{fighoro} and Figure \ref{figequi}.
			
			\begin{figure}[h]
				\centering
				
				\tikzset{every picture/.style={line width=0.75pt}} 
				
				\begin{tikzpicture}[x=0.7pt,y=0.7pt,yscale=-0.9,xscale=0.9]
					
					\draw  [color={rgb, 255:red, 0; green, 0; blue, 0 }  ,draw opacity=1 ] (203,197.9) .. controls (203,136.65) and (252.65,87) .. (313.9,87) .. controls (375.15,87) and (424.8,136.65) .. (424.8,197.9) .. controls (424.8,259.15) and (375.15,308.8) .. (313.9,308.8) .. controls (252.65,308.8) and (203,259.15) .. (203,197.9) -- cycle ;
					\draw [color={rgb, 255:red, 208; green, 2; blue, 27 }  ,draw opacity=1 ]   (203,197.9) -- (424.8,197.9) ;
					\draw  [draw opacity=0] (281.15,197.7) .. controls (290.81,169.87) and (316.27,150) .. (346.15,150) .. controls (376.44,150) and (402.17,170.41) .. (411.53,198.83) -- (346.15,222.5) -- cycle ; \draw  [color={rgb, 255:red, 65; green, 117; blue, 5 }  ,draw opacity=1 ] (281.15,197.7) .. controls (290.81,169.87) and (316.27,150) .. (346.15,150) .. controls (376.44,150) and (402.17,170.41) .. (411.53,198.83) ;
					\draw  [draw opacity=0][dash pattern={on 4.5pt off 4.5pt}] (411.14,197.68) .. controls (413.83,205.42) and (415.3,213.78) .. (415.3,222.5) .. controls (415.3,262.54) and (384.34,295) .. (346.15,295) .. controls (307.96,295) and (277,262.54) .. (277,222.5) .. controls (277,213.87) and (278.44,205.59) .. (281.08,197.91) -- (346.15,222.5) -- cycle ; \draw  [color={rgb, 255:red, 65; green, 117; blue, 5 }  ,draw opacity=1 ][dash pattern={on 4.5pt off 4.5pt}] (411.14,197.68) .. controls (413.83,205.42) and (415.3,213.78) .. (415.3,222.5) .. controls (415.3,262.54) and (384.34,295) .. (346.15,295) .. controls (307.96,295) and (277,262.54) .. (277,222.5) .. controls (277,213.87) and (278.44,205.59) .. (281.08,197.91) ;
					\draw  [fill={rgb, 255:red, 0; green, 0; blue, 0 }  ,fill opacity=1 ] (310.9,197.9) .. controls (310.9,196.8) and (311.8,195.9) .. (312.9,195.9) .. controls (314,195.9) and (314.9,196.8) .. (314.9,197.9) .. controls (314.9,199) and (314,199.9) .. (312.9,199.9) .. controls (311.8,199.9) and (310.9,199) .. (310.9,197.9) -- cycle ;
					
					\draw (415,255) node [anchor=north west][inner sep=0.75pt]  [color={rgb, 255:red, 0; green, 0; blue, 0 }  ,opacity=1 ] [align=left] {$\displaystyle \mathbb{H}^{n+1}$};
					\draw (222,201) node [anchor=north west][inner sep=0.75pt]  [color={rgb, 255:red, 208; green, 2; blue, 27 }  ,opacity=1 ] [align=left] {$\displaystyle P$};
					\draw (289,144) node [anchor=north west][inner sep=0.75pt]  [color={rgb, 255:red, 65; green, 117; blue, 5 }  ,opacity=1 ] [align=left] {$\displaystyle \Sigma $};
					\draw (312.9,200.9) node [anchor=north west][inner sep=0.75pt]   [align=left] {$\displaystyle O$};

				\end{tikzpicture}
				\caption{$\Sigma$ is a horosphere}
				\label{fighoro}
			\end{figure}
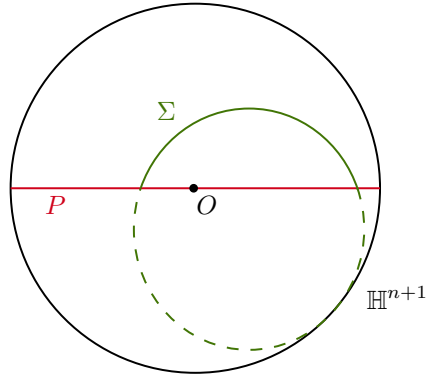
			
			\begin{figure}[h]
				\centering
				
				\tikzset{every picture/.style={line width=0.75pt}} 
				
				\begin{tikzpicture}[x=0.7pt,y=0.7pt,yscale=-0.9,xscale=0.9]
					
					\draw  [color={rgb, 255:red, 0; green, 0; blue, 0 }  ,draw opacity=1 ] (205,136.1) .. controls (205,74.85) and (254.65,25.2) .. (315.9,25.2) .. controls (377.15,25.2) and (426.8,74.85) .. (426.8,136.1) .. controls (426.8,197.35) and (377.15,247) .. (315.9,247) .. controls (254.65,247) and (205,197.35) .. (205,136.1) -- cycle ;
					\draw [color={rgb, 255:red, 208; green, 2; blue, 27 }  ,draw opacity=1 ]   (205,136.1) -- (426.8,136.1) ;
					\draw  [draw opacity=0] (212.14,135.59) .. controls (222.69,102.2) and (253.81,78) .. (290.55,78) .. controls (327.57,78) and (358.88,102.56) .. (369.2,136.34) -- (290.55,160.6) -- cycle ; \draw  [color={rgb, 255:red, 65; green, 117; blue, 5 }  ,draw opacity=1 ] (212.14,135.59) .. controls (222.69,102.2) and (253.81,78) .. (290.55,78) .. controls (327.57,78) and (358.88,102.56) .. (369.2,136.34) ;
					\draw  [draw opacity=0][dash pattern={on 4.5pt off 4.5pt}] (369.19,136.35) .. controls (371.4,143.57) and (372.65,151.23) .. (372.79,159.16) .. controls (373.58,204.78) and (337.41,242.39) .. (291.99,243.19) .. controls (246.57,243.98) and (209.11,207.65) .. (208.31,162.04) .. controls (208.15,152.61) and (209.56,143.52) .. (212.31,135.03) -- (290.55,160.6) -- cycle ; \draw  [color={rgb, 255:red, 65; green, 117; blue, 5 }  ,draw opacity=1 ][dash pattern={on 4.5pt off 4.5pt}] (369.19,136.35) .. controls (371.4,143.57) and (372.65,151.23) .. (372.79,159.16) .. controls (373.58,204.78) and (337.41,242.39) .. (291.99,243.19) .. controls (246.57,243.98) and (209.11,207.65) .. (208.31,162.04) .. controls (208.15,152.61) and (209.56,143.52) .. (212.31,135.03) ;
					\draw  [fill={rgb, 255:red, 0; green, 0; blue, 0 }  ,fill opacity=1 ] (315.9,136.1) .. controls (315.9,135.26) and (316.58,134.58) .. (317.42,134.58) .. controls (318.27,134.58) and (318.95,135.26) .. (318.95,136.1) .. controls (318.95,136.94) and (318.27,137.63) .. (317.42,137.63) .. controls (316.58,137.63) and (315.9,136.94) .. (315.9,136.1) -- cycle ;
					
					\draw (417,193.2) node [anchor=north west][inner sep=0.75pt]  [color={rgb, 255:red, 0; green, 0; blue, 0 }  ,opacity=1 ] [align=left] {$\displaystyle \mathbb{H}^{n+1}$};
					\draw (224,139.2) node [anchor=north west][inner sep=0.75pt]  [color={rgb, 255:red, 208; green, 2; blue, 27 }  ,opacity=1 ] [align=left] {$\displaystyle P$};
					\draw (356,73.2) node [anchor=north west][inner sep=0.75pt]  [color={rgb, 255:red, 65; green, 117; blue, 5 }  ,opacity=1 ] [align=left] {$\displaystyle \Sigma $};
					\draw (317.9,139.1) node [anchor=north west][inner sep=0.75pt]   [align=left] {$\displaystyle O$};
				\end{tikzpicture}
				\caption{$\Sigma$ is an equidistant hypersurface}
				\label{figequi}
			\end{figure}
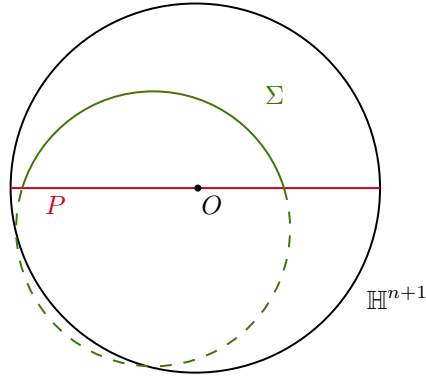 
		\end{rem}
		\begin{rem}\normalfont
			Since the star-shapedness of $\Sigma$ implies that $\Sigma$ is an embedded hypersurface, Theorem \ref{starshaped} can be obtained immediately from Theorem \ref{alex} when the contact angle $\theta\in (0,\frac{\pi}{2}]$.
		\end{rem}
		\section*{Appendix}
		To obtain the existence and sufficient regularity of the solution of the mixed-boundary problem \eqref{PDEforcapHK}, \eqref{Pdehoro} and \eqref{Pdegeo}, we write it in the conformal Euclidean space as
		\begin{align*}
			\left\{\begin{array}{ll}
				\mathcal{L}f:=e^{-2u}(\overline\Delta_\delta f+(n-1)du(f))-(n+1)f=1    & \mbox{ in }\Omega; \\
				f=0     & \mbox{ on }\Sigma;\\
				f_{\overline N}-\gamma f=c_0 & \mbox{ on }T,
			\end{array}\right.
		\end{align*}
		where $\overline\nabla_\delta$ and $\overline\Delta_\delta$ are the Levi-Civita connection and Laplacian with respect to the Euclidean metric $\delta$ respectively. Since the conformal map preserves the angle, the  condition of $\theta$ is the same under both Euclidean metric and hyperbolic metric.
		
		We notice that the coefficients $a_{ij}=e^{-2u}\delta_{ij},\,b^{i}=(n-1)e^{-2u}u^i,\;c=-(n+1)$ satisfy the conditions in  \cite{Lieberman1986Mixedboundary}, page 435, and $\gamma\geq0$ satisfies the condition in \cite[Lemma 4.1]{Tang2013Mixedboundarymaxprin}. From the theory of Lieberman and Theorem A.3 in \cite{jia2023heintze}, there exists a solution $f\in C^{\infty}(\overline\Omega\setminus\partial\Sigma)\cup C^2(\mrm{int}(\Omega)\cup T)$ with respect to the conformal metric $\delta$. 
		
		Let $d_{\partial\Sigma}$ be the distance function from $\partial\Sigma$ and define $\Omega_\varepsilon=\{x\in\Omega:d_{\partial\Sigma}(x)>\varepsilon\}$. To apply the result of Lieberman, we need to introduce the norm 
		\begin{align*}
			|f|^b_a=\sup_{\varepsilon>0}\varepsilon^{a+b}|f|_{a,\Omega_\varepsilon},
		\end{align*}
		where $|f|_{a,\Omega_\varepsilon}=\sum\limits_{i=1}^k\sup_{x\in\Omega_\varepsilon}|\nabla_\delta^k f|+\sup_{x,y\in\Omega_\varepsilon}\frac{|D^kf(x)-D^kf(y)|}{|x-y|^\alpha}$ for $a>0$ and $a=k+\alpha$ for an integer $k$ and $0\leq\alpha<1$.
		
		Let $f$ be the solution of the following boundary value problem
		\begin{align*}
			\left\{\begin{array}{ll}
				\mathcal Lf=g&\mbox{ in }\Omega;\\
				f=0&\mbox{ on }\Sigma;\\
				\langle\overline\nabla_\delta f,\overline N\rangle=h&\mbox{ on }T.
			\end{array}\right.
		\end{align*}
		From the theory by Lieberman (See Theorem 4 in \cite{LIEBERMAN1989572}) and Lemma A.1 in \cite{jia2023heintze}, we have the following estimate
		\begin{align*}
			|f|_{a}^{-\lambda}\leq C(|g|_{a-2}^{2-\lambda}+|h|_{a-1}^{1-\lambda}+|g|_0+|h|_0).
		\end{align*}
		Here we notice that the maximum principle (see \cite[Lemma 4.1]{Tang2013Mixedboundarymaxprin}) applied in \cite[Lemma A.1]{jia2023heintze} holds for general operator $\mathcal L$ with $c=-(n+1)<0$ and $\gamma\geq0$.
		Furthermore, from the proof of Theorem 4 in \cite{LIEBERMAN1989572}, the construction of the key Miller's type barrier requires $\lambda<\frac{\pi}{2\theta}$ (See \cite[Lemma 4.1]{LIEBERMAN1989572}).
		
		From the discussion by Jia-Wang-Xia (See \cite{jia2023heintze}, Page 8-9 and Lemma A.1), we can see that $|\overline\nabla_\delta^2 f|_\delta\in L^2(\Omega)$ requires $|\overline\nabla^2_\delta f|_\delta\leq Cd_{\partial\Sigma}^{-\beta}$ for $\beta\in(0,1)$ and $a>2$. Then, we can also obtain that $|\nabla_{\delta}^2 f|_\delta\in L^1(T)$. 
		
		Meanwhile in the  proof of the Theorem \ref{HKineq}, we need the regularity of $f$ satisfying $f\in C^{1,\alpha}(\overline\Omega)$. From the definition and the monotonicity of the norm $|\cdot|_a^b$, we have $|f|_{\lambda}=|f|_{\lambda}^{-\lambda}\leq|f|_a^{-\lambda}$. Therefore it is sufficient for $f\in C^{1,\alpha}(\overline\Omega)$ when $\lambda>1$.
		
		In conclusion, all the condition for the applicable regularity will be satisfied if we let $1<\lambda<\mbox{min}\{3,\frac{\pi}{2\theta}\}$, $a=\frac{\lambda+3}{2}>2$ and $\beta=a-\lambda=\frac{3-\lambda}{2}\in(0,1)$. Hence, the condition $\theta<\frac{\pi}{2}$ is sufficient. From a well-known fact, under conformal transformations of metrics we have
		\begin{align*}
			\overline\nabla f=e^{-2u}\overline\nabla_{\delta}f,
		\end{align*}
	and
		\begin{align*}
			\overline\nabla^2_{ij} f=\overline\nabla^2_{\delta,\;ij}f-u_if_j-u_jf_i-\delta(\overline\nabla_{\delta}f,\overline\nabla_{\delta}u)\delta_{ij}.
		\end{align*}
		
		\noindent Then all the estimates above hold with respect to the hyperbolic metric $\overline g$, that is, $f\in C^{\infty}(\overline\Omega\setminus\partial\Sigma)\cup C^2(\Omega\cup T)\cup C^{1,\alpha}(\overline\Omega)$ and $|\overline\nabla^2 f|_{\overline g}\in L^1(T)$. Now we obtain the existence and the regularity of the solution of \eqref{PDEforcapHK}.
		\printbibliography
	\end{document}